\newtheorem{thm}{Theorem}[section]
\newtheorem{lem}[thm]{Lemma}
\newtheorem{prop}[thm]{Proposition}
\newtheorem{cor}[thm]{Corollary}
\numberwithin{equation}{section}
\def\Xint#1{\mathchoice
  {\XXint\displaystyle\textstyle{#1}}%
  {\XXint\textstyle\scriptstyle{#1}}%
  {\XXint\scriptstyle\scriptscriptstyle{#1}}%
  {\XXint\scriptscriptstyle\scriptscriptstyle{#1}}%
  \!\int}
\def\XXint#1#2#3{{\setbox0=\hbox{$#1{#2#3}{\int}$}
  \vcenter{\hbox{$#2#3$}}\kern-.5\wd0}}
\def\dashint{\Xint-}
\newcommand{\al}{\alpha}                \newcommand{\lda}{\lambda}
\newcommand{\om}{\Omega}                \newcommand{\pa}{\partial}
\newcommand{\va}{\varepsilon}           \newcommand{\ud}{\mathrm{d}}
\newcommand{\be}{\begin{equation}}      \newcommand{\ee}{\end{equation}}
\newcommand{\w}{\omega}                 
\newcommand{\Lda}{\Lambda}              \newcommand{\B}{\mathcal{B}}
\newcommand{\R}{\mathbb{R}}              \newcommand{\Sn}{\mathbb{S}^n}
\newcommand{\abs}[1]{\lvert#1\rvert}
\begin{document}

\title[A dual Yamabe flow]{A dual Yamabe flow and related integral flows}

\author[J. Xiong]{Jingang Xiong}
\address{School of Mathematical Sciences, Laboratory of Mathematics and Complex Systems, MOE\\ Beijing Normal University, Beijing 100875, China}
\email{jx@bnu.edu.cn}
\thanks{Partially supported by NSFC 12325104 and NSFC 12271028.}


\keywords{Hardy-Littlewood-Sobolev functional, Dual $Q$ curvature, Integral flow}

\date{\today}

\begin{abstract}
We study a family of nonlinear integral flows that involve Riesz potentials on Riemannian manifolds. In the Hardy-Littlewood-Sobolev (HLS) subcritical regime, we present a precise blow-up profile exhibited by the flows. In the HLS critical regime, by introducing a \textit{dual $Q$ curvature} we demonstrate the concentration-compactness phenomenon. If, in addition, the integral kernel matches with the Green's function of a conformally invariant elliptic operator, this critical flow can be considered as a dual Yamabe flow. Convergence is then established on the unit spheres, which is also valid on certain locally conformally flat manifolds.
\end{abstract}

\dedicatory{In memory of my father}

\maketitle

\section{Introduction}

\subsection{Background and motivations}  Let $(M,g_0)$ be a smooth compact Riemannian manifold of dimension $n\ge 3$ and
\[
[g_0]=\{\rho g_0: \rho\in C^\infty(M),~ \rho>0 \}
\]
be the conformal class of $g_0$.  The Yamabe problem asks whether there exists a metric $g\in [g_0]$ of constant scalar curvature.
Recall that  the conformal Laplacian of $g_0$ on $M$ is given by
\[
L_{g_0}:=\Delta_{g_0} -c(n) R_{g_0},
\]
where $\Delta_{g_0}$ denotes the Laplace-Beltrami operator associated with the metric $g_0$,  $c(n)= \frac{n-2}{4(n-1)} $ and $R_{g_0}$ represents the scalar curvature of $g_0$.  It satisfies the conformal transformation law
\[
L_{u^{\frac{4}{n-2}} g_0}(\phi)= u^{-\frac{n+2}{n-2}} L_{g_0} (u\phi) \quad \forall~ \phi\in C^2(M).
\]
Hence, the Yamabe problem amounts to seeking a solution of
\[
-L_{g_0}u=c u^{\frac{n+2}{n-2}} \quad \mbox{on }M, \quad u>0,
\]
for some constant $c$.  The existence was proved by the variational method through Yamabe \cite{Y}, Trudinger \cite{T}, Aubin \cite{A} and Schoen \cite{S}.   In 1980s, R. Hamilton introduced the Yamabe flow
\[
\pa_t g= -(R_g-r_g) g
\]
where $t\ge 0$ and  $r_g= \mathrm{Vol}_g(M)^{-1} \int_M R_g\,\ud vol_g$ and $ \mathrm{Vol}_g(M)$ denotes the volume of $M$ with respect to the metric $g$. The Yamabe flow  is the normalized negative $L^2$ gradient flow of the Yamabe  functional
\[
F_2[g]:=
\frac{\int_{M} R_g\,\ud vol_g}{ (\int_{M} \,\ud vol_g )^{\frac{n-2}{n}}}
\]
and the conformal class is preserved along the flow.  Thus it has a scalar form
\[
\pa_t u^{\frac{n+2}{n-2}}= L_{g_0} u+c(n) r_g u^{\frac{n+2}{n-2}}.
\]
The convergence had been established by Chow \cite{Chow},  Ye \cite{Ye}, Schwetlick-Struwe \cite{Schwetlick&Struwe} and Brendle \cite{Brd}.

Analogously, there has been much interest in the fourth-order Yamabe-type problem.  If $n\ge 5$,  let
\begin{align*}
P_{g_0}&:=  \Delta_{g_0}^2 -\mathrm{div}_{g_0}(a_n R_{g_0} g+b_nRic_{g_0})d+\frac{n-4}{2}Q_{g_0}, \\
Q_{g_0}&:=-\frac{1}{2(n-1)} \Delta_{g_0} R_{g_0}+\frac{n^3-4n^2+16n-16}{8(n-1)^2(n-2)^2} R_{g_0}^2-\frac{2}{(n-2)^2} |Ric_{g_0}|^2,
\end{align*} be the Paneitz operator \cite{Pan83} and the $Q$ curvature \cite{Bra85}, respectively. The Paneitz operator
satisfies the conformal transformation law
\[
P_{u^{\frac{4}{n-4}}g_0}(\phi)= u^{-\frac{n+4}{n-4}} P_{g_0} (u\phi) \quad \forall~ \phi\in C^4(M).
\]
Hence, the Yamabe type problem for $Q$ curvature is equivalent to solving
\[
P_{g_0} ( u)=c u^{\frac{n+4}{n-4}} \quad \mbox{on }M, \quad u>0,
\]
for some constant $c$. However, both the variational method and potential fourth-order flow approach encounter difficulties in obtaining a positive object.

Recently, Gursky-Malchiodi \cite{GM} established the existence of solutions to the 4th order Yamabe problem under the assumption that there is a conformal metric with nonnegative scalar and $Q$ curvature, and that the $Q$ curvature is strictly positive at some points. Hang and Yang \cite{HY} demonstrated existence on manifolds of positive Yamabe type, provided there exists a conformal metric whose $Q$ curvature is nonnegative and positive at certain points.  Both assumptions of \cite{GM} and \cite{HY} imply that
\be \label{eq:positivity}
P_{g_0} \mbox{ is invertible, positive and its Green's function is positive}.
\ee Gursky-Malchiodi's proof  employs  the normalized $W^{2,2}$ gradient flow of the total $Q$ curvature functional
\[
F_4[g]= \mathrm{Vol}_g(M)^{-\frac{n-4}{n}} \int_{M} Q_g\,\ud vol_g,
\]
whereas Hang-Yang's proof leverages a dual variation problem; see Malchiodi \cite{M} for a survey. We also refer to Hebey-Robert \cite{HR}, Qing-Raske \cite{QR} for earlier results.

The flow in \cite{GM} is given
\be \label{eq:gm-flow}
\pa_t u=-u+ \mu(t)(P_{\bar g})^{-1}(|u|^{\frac{n+4}{n-2}})
\ee
and satisfies initial condition $u(\cdot, 0)=1$,
where $\bar g\in [g_0]$, $(P_{\bar g})^{-1}$ is the inverse of $P_{\bar g}$ and $\mu(t)$ is a normalization. They have shown that the flow converges to a solution of the fourth-order Yamabe problem, given that $F_4[\bar g]$ falls below a natural threshold. However, exploring the dynamic behavior when $\bar g$ is any element within the conformal class $[g_0]$ remains an intriguing problem.

In addition to the conformal Laplacian and Paneitz operator, there are many other important conformally invariant operators of fractional and higher orders. See  Fefferman-Graham \cite{FG},  Graham-Jenne-Mason-Sparling \cite{GJMS}, Graham-Zworski \cite{GZ}, Chang-Gonz\'alez \cite{Chang-G} and Chang-Yang \cite{CY17} and many others. The fractional Yamabe flow has been studied by  Jin-Xiong \cite{JX14}, Daskalopoulos-Sire-V\'azquez \cite{DSV} and Chan-Sire-Sun \cite{CSS}.

Finally, we note that differential integral flows of \eqref{eq:gm-flow} type have been frequently used to deform level sets of functionals in the critical point theory. In particular, it plays a crucial role in  Bahri-Coron \cite{BC} about the Nirenberg problem on the three dimensional unit sphere.  In this paper, we would like to conduct a detailed study of this type of flow.

\subsection{A general framework and main theorems}

Let $M$ be a compact smooth Riemannian manifold of dimension $n\ge 1$, and $K_0:M\times M\to (0,\infty]$ be a $C^1$ singular kernel of the Riesz potential type.  Namely, for any $X,Y\in M$,
\begin{align}
K_0(X,Y)&=K_0(Y,X), \tag{K-1}\label{eq:K-1}\\
\frac{1}{\Lda} d_{g_0} (X,Y)^{2\sigma-n}&\le K_0(Y,X)\le \Lda d_{g_0} (X,Y)^{2\sigma-n},\tag{K-2}\label{eq:K-2}\\
|\nabla_{g_0} K_0(\cdot,Y)|&\le \Lda dist_g (\cdot,Y)^{2\sigma-n-1} \quad \mbox{on }M\setminus \{Y\}, \tag{K-3}  \label{eq:K-3}
\end{align}
where $d_{g_0}$ is the distance function with respect to metric $g_0$,  $0<\sigma<n/2$ and $\Lda\ge 1$ are constants.

Define
\[
\mathcal{K}_{g_0}(f)(X):=\int_{M} K_0(X,Y) f(Y)\,\ud vol_{g_0}(Y) \quad \mbox{for }f\in L^1(M).
\]
For $T>0$, we study the Cauchy problem for the differential-integral equation:
\be\label{eq:IP}
\begin{cases}
\pa_t u^m &= \mathcal{K}_{g_0}(u)\quad \mbox{on }M\times (0,T),\\
u(\cdot,0)&=u_0\ge 0,
\end{cases}
\ee
where $m>0$ and $u_0\in C^{0}(M)$ is not identical to zero. By Riesz potential estimates and Picard-Lindel\"of theorem,   \eqref{eq:IP} has a unique solution with $u^m \in C^1([0,T^*); C^0(M))$ for some $T^*>0$ representing the maximum existence time.

If $m>\frac{n-2\sigma}{n+2\sigma}$, $m\neq 1$, the steady problem
\be \label{eq:st0}
\mathcal{K}_{g_0}(S)=S^m \quad \mbox{on }M, \quad  S>0 ,
\ee
has a continuous solution. Moreover, the solution is unique if $m>1$. See Section \ref{s:SS} below.

\begin{thm}\label{thm:main0} Let $u$ be a solution of \eqref{eq:IP} with $u^m \in C^1([0,T^*); C^0(M))$ for some $T^*>0$ representing the maximum existence time. Then we have:
\begin{itemize}
\item[(i)] If $m> 1$, then $T^*=\infty$ and
\[
\lim_{t\to \infty} \|t^{-\frac{1}{m-1}} u(\cdot, t)- c S(\cdot)\|_{C^0(M)}=0;
\]
\item[(ii)] If $\frac{n-2\sigma}{n+2\sigma}<m<1$, then $T^*<\infty$ and
\[
\lim_{t\to ( T^*)^-} \|(T^*- t)^{-\frac{1}{m-1}} u(\cdot, t)- c S(\cdot)\|_{C^0(M)}=0.
\]
\item[(iii)] If $m=\frac{n-2\sigma}{n+2\sigma}$, then $T^*<\infty$ and
\[
\lim_{t\to ( T^*)^-} (T^*- t)^{-\frac{1}{m-1}} \| u(\cdot, t)\|_{L^{\frac{2n}{n+2\sigma}}}= c.
\]
Here $S$ is a solution of \eqref{eq:st0} and $c$ are positive constants.
\end{itemize}

\end{thm}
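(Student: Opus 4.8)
The plan is to treat (i) and (ii) by reducing \eqref{eq:IP} to the long-time behaviour of one autonomous \emph{rescaled flow}, and to obtain (iii) by a direct ODE argument for two energies.

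\textbf{Step 1 (fixing $T^*$ and rescaling).} For $m>1$, \eqref{eq:K-2} gives $\|\mathcal{K}_{g_0}(f)\|_{C^0}\le C\|f\|_{C^0}$, hence $\frac{d}{dt}\|u^m\|_{C^0}\le C\|u^m\|_{C^0}^{1/m}$ with $1/m<1$, so $\|u^m\|_{C^0}$ grows at most polynomially and $T^*=\infty$. For $m<1$, integrating \eqref{eq:IP} and using $\inf_X\int_M K_0(X,Y)\,\ud vol_{g_0}(Y)>0$ together with Hölder's inequality gives $\frac{d}{dt}\int_M u^m\,\ud vol_{g_0}\ge c\big(\int_M u^m\,\ud vol_{g_0}\big)^{1/m}$ with $1/m>1$, forcing $T^*<\infty$ and $\int_M u^m(\cdot,t)\,\ud vol_{g_0}\to\infty$. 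In every case I then set $u(x,t)=\beta(t)\,v(x,s(t))$ with $\beta(t)^{m-1}=\tfrac{|m-1|}{m}\,|t-t_*|$ ($t_*=0$ if $m>1$, $t_*=T^*$ if $m<1$) and $\dot s(t)=\beta(t)^{1-m}$; a direct computation turns \eqref{eq:IP} into the autonomous flow
\[
\pa_s(v^m)=\mathcal{K}_{g_0}(v)-v^m\qquad\text{on }M\times(s_0,\infty),
\]
whose positive stationary solutions are exactly the solutions of \eqref{eq:st0}, and $s\to\infty$ corresponds to $t\to\infty$ (case (i)) or $t\to(T^*)^-$ (case (ii)).

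\textbf{Step 2 (Lyapunov functional and a priori bounds).} Using \eqref{eq:K-1} and $\pa_sv=(mv^{m-1})^{-1}\pa_s(v^m)$, the functional
\[
\mathcal{J}[v]=\tfrac12\int_M v\,\mathcal{K}_{g_0}(v)\,\ud vol_{g_0}-\tfrac1{m+1}\int_M v^{m+1}\,\ud vol_{g_0}
\]
is nondecreasing along the rescaled flow, with $\frac{d}{ds}\mathcal{J}[v]=\frac1m\int_M v^{1-m}(\mathcal{K}_{g_0}(v)-v^m)^2\,\ud vol_{g_0}\ge0$. The key quantitative input is a uniform two-sided bound $0<c_1\le v(\cdot,s)\le c_2$ for $s\ge s_0$. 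For $m>1$ this is immediate: $\lambda S$ is a stationary supersolution of the rescaled flow for $\lambda>1$ and a stationary subsolution for $0<\lambda<1$ (the sign of $(\lambda-\lambda^m)S^m$ is what matters), so the comparison principle — valid since $\mathcal{K}_{g_0}$ is order preserving and $\tau\mapsto\tau^m$ increasing — traps $v$ between $\lambda_1S$ and $\lambda_2S$ once the (restarted, hence positive and continuous) initial datum is. For $\tfrac{n-2\sigma}{n+2\sigma}<m<1$ the constant multiples of $S$ are barriers of the wrong sign, and this is the heart of the matter: one must use the strict subcriticality, bootstrapping $\|v\|_{L^\infty}\lesssim\|\mathcal{K}_{g_0}(v)\|_{L^\infty}^{1/m}$ with the Riesz mapping bounds $\|\mathcal{K}_{g_0}(v)\|_{L^q}\lesssim\|v\|_{L^p}$, $\tfrac1p-\tfrac1q=\tfrac{2\sigma}{n}$, the gap $m>\tfrac{n-2\sigma}{n+2\sigma}$ making the iteration gain integrability and close at $L^\infty$; the matching lower bound follows from the Duhamel identity $v^m(\cdot,s)=e^{-(s-s_0)}v^m(\cdot,s_0)+\int_{s_0}^se^{-(s-\tau)}\mathcal{K}_{g_0}(v(\cdot,\tau))\,\ud\tau$ together with $\mathcal{K}_{g_0}(v)\ge c\|v\|_{L^1}$ and a uniform lower bound on $\|v\|_{L^1}$, which rules out degeneration to $0$.

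\textbf{Step 3 (compactness and convergence).} Given these bounds, \eqref{eq:K-3} (or the weaker Hölder Riesz estimate) makes $\{v(\cdot,s)\}_{s\ge s_0+1}$ equicontinuous, hence precompact in $C^0(M)$; and $\sup_s\mathcal{J}[v]<\infty$ yields $\int_{s_0}^\infty\!\!\int_M v^{1-m}(\mathcal{K}_{g_0}(v)-v^m)^2\,\ud vol_{g_0}\,\ud s<\infty$, so along some $s_j\to\infty$ one has $v(\cdot,s_j)\to v_\infty$ in $C^0(M)$ with $\mathcal{K}_{g_0}(v_\infty)=v_\infty^m$, $v_\infty>0$: a solution of \eqref{eq:st0}. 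For $m>1$ that solution is unique (at a point where $S_1/S_2$ is maximal one gets $\max_M(S_1/S_2)\le1$), so $v_\infty=S$; since every sequence $s_j\to\infty$ admits such a subsequence, $v(\cdot,s)\to S$ in $C^0(M)$, and as $|t-t_*|^{-1/(m-1)}\beta(t)=\big(\tfrac{|m-1|}{m}\big)^{1/(m-1)}$ this gives (i) and (ii) with $c=\big(\tfrac{|m-1|}{m}\big)^{1/(m-1)}$. (For $m<1$, where \eqref{eq:st0} may have several positive solutions, I would upgrade subsequential to full convergence by a {\L}ojasiewicz--Simon inequality for the analytic functional $\mathcal{J}$ near the compact set of positive solutions, the limit then being one such $S$.)

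\textbf{Step 4 (the critical case and the main difficulty).} Since \eqref{eq:st0} may have no solution when $m=\tfrac{n-2\sigma}{n+2\sigma}$, I argue with $\mathcal{F}[u]=\tfrac12\int_M u\,\mathcal{K}_{g_0}(u)\,\ud vol_{g_0}$ and $\mathcal{G}[u]=\tfrac1{m+1}\int_M u^{m+1}\,\ud vol_{g_0}$ in the original variables: $\frac{d}{dt}\mathcal{F}[u]=m\int_M u^{m-1}(\pa_tu)^2\,\ud vol_{g_0}\ge0$, $\frac{d}{dt}\mathcal{G}[u]=\tfrac2m\mathcal{F}[u]$, and Cauchy--Schwarz on $2\mathcal{F}[u]=\int_M u^{(m+1)/2}\cdot u^{-(m-1)/2}\mathcal{K}_{g_0}(u)\,\ud vol_{g_0}$ gives $\mathcal{F}'\ge\frac{4\mathcal{F}^2}{m(m+1)\mathcal{G}}$, hence $(\log\mathcal{F})'\ge\tfrac2{m+1}(\log\mathcal{G})'$ and $R(t):=\mathcal{F}[u]\,\mathcal{G}[u]^{-2/(m+1)}$ is nondecreasing. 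The HLS inequality on $(M,g_0)$ (valid for $K_0$ by \eqref{eq:K-2}) gives $R\le C$; since $\mathcal{G}[u]\to\infty$ as $t\to T^*$ and $R$ is increasing, $R(t)\to R_\infty\in(0,\infty)$. Then $\frac{d}{dt}\mathcal{G}[u]=\tfrac2m R(t)\mathcal{G}[u]^{2/(m+1)}$ integrates to $\mathcal{G}[u(\cdot,t)]^{-(1-m)/(m+1)}=\tfrac{2(1-m)}{m(m+1)}\int_t^{T^*}R\sim\tfrac{2(1-m)R_\infty}{m(m+1)}(T^*-t)$, whence $\|u(\cdot,t)\|_{L^{2n/(n+2\sigma)}}=\big((m+1)\mathcal{G}[u(\cdot,t)]\big)^{1/(m+1)}\sim c\,(T^*-t)^{-1/(1-m)}$, i.e. (iii), since $-1/(m-1)=1/(1-m)$. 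I expect the main obstacle to be Step 2 in the range $\tfrac{n-2\sigma}{n+2\sigma}<m<1$: proving the uniform two-sided bound on the rescaled flow without the comparison barriers available for $m>1$, i.e. genuinely exploiting the subcritical Riesz/HLS estimates to preclude concentration, together with the {\L}ojasiewicz--Simon step needed there for full convergence.
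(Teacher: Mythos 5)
Your overall strategy --- rescale to an autonomous flow, prove uniform two-sided bounds, exploit the monotone energy, and close by compactness plus uniqueness (for $m>1$) or Łojasiewicz--Simon (for $m<1$) --- closely parallels Section \ref{s:blt} and Theorem \ref{thm:conditional} of the paper, and your identification of what is hard is accurate. The pivotal step for (ii), the uniform $L^\infty$ bound for the rescaled flow, is exactly the paper's Lemma \ref{lem:lp-l-infty}/Proposition \ref{prop:fde-2}: the subcriticality gap makes the potential $V=\tilde u^{1-m}$ small in $L^{n/(2\sigma)}$ on small balls (via Hölder from the $L^{m+1}$ bound), so that the smallness-to-regularity Proposition \ref{prop:small_to_regular} plus bootstrap closes; you name this but do not fill it in. For (iii), your monotone quantity $R=\mathcal{F}\,\mathcal{G}^{-2/(m+1)}$ is, up to normalization, $J_m(u)$ from \eqref{eq:Jm}, and the Cauchy--Schwarz giving $\dot R\ge0$ is the same computation as in Proposition \ref{prop:fde-1}; your direct integration of $\mathcal{G}'=\tfrac{2}{m}R\,\mathcal{G}^{2/(m+1)}$ from $t$ to $T^*$ (using $\mathcal{G}\to\infty$ and $R\to R_\infty>0$) is actually a cleaner path to the limit than the paper's detour through the rescaled energy $G(\tilde u)$ and a linear ODE, and it is correct.

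The one step that does not close as written is full convergence in (i). Constant barriers $\lambda S$ give two-sided bounds but do not themselves converge, and the claim that ``every sequence $s_j\to\infty$ admits such a subsequence'' is not automatic: integrability of the dissipation only produces \emph{some} times where the dissipation is small, not a subsequence of any prescribed $s_j$, so a priori some accumulation points of $\{v(\cdot,s)\}$ might fail to be steady states. You need one more input: either LaSalle's invariance principle (precompact orbit, $\mathcal{J}$ strictly increasing off equilibria, unique steady state by Proposition \ref{prop:unique}), or an argument that the dissipation $s\mapsto\int_M v^{1-m}(\mathcal{K}_{g_0}(v)-v^m)^2\,\ud vol_{g_0}$ is uniformly continuous in $s$ (which, combined with its integrability, forces it to tend to $0$). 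Alternatively, notice that time-dependent multiples $\lambda(s)S$ with $\dot\lambda=(\lambda^{2-m}-\lambda)/m$ are ordered sub/supersolutions of the rescaled flow converging to $S$ at rate $O(1/s)$; this is precisely the content, in the original variables, of Proposition \ref{prop:mge1} with the separable solutions $U_0,U_c$, and it yields the sharper conclusion $\|u/U_0-1\|_{C^0}=O(1/t)$ for free.
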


One can obtain precise convergence rates for the above items (i) and (ii),  as exemplified by Jin-Xiong-Yang \cite{JXY} in their study of a nonlinear boundary control problem. Section \ref{s:conver} provides essential tools such as the linearized operator and eigenfunctions for further exploration. Our proof is inspired by previous studies on the Cauchy-Dirichlet problem for the porous medium equations; see \cite{JXY} and references therein.

For the linear case $m=1$, we also have $T^*=\infty$ and
\[
\lim_{t\to \infty} \|e^{-t} u(\cdot, t)-  \phi_1(\cdot)\|_{C^0(M)}=0,
\]
where $\phi_1>0$ is a positive  eigenfunction  associated to the largest eigenvalue of $\mathcal{K}_{g_0}$. If $0<m< \frac{n-2\sigma}{n+2\sigma}$, we still have $T^*<\infty$. But the blow-up behavior is unclear.

When $m= \frac{n-2\sigma}{n+2\sigma}$, the flow has an independent interest in conformal geometry as follows.  Let
 \[
 [g_0]_0=\{u^{\frac{4}{n+2\sigma}} g_0: u\in C^{0}(M),~u>0\}
 \]
be the $C^0$ conformal class of $g_0$. For any $g=u^{\frac{4}{n+2\sigma}} g_0\in [g_0]_0$, we let
\[
\mathcal{K}_{g}(f)(X):=\int_{M} K_g(X,Y) f(Y)\,\ud vol_{g}(Y) \quad \mbox{for }f\in L^1(M),
\]
where
\[
K_g(X,Y)= \Big(u(X)u(Y)\Big)^{-\frac{n-2\sigma}{n+2\sigma}}K_0(X,Y).
\]
By definition, $\mathcal{K}_{g}$ is a conformally invariant operator and
\[
\mathcal{K}_{u^{\frac{4}{n+2\sigma}} g_0}(\phi)= u^{-\frac{n-2\sigma}{n+2\sigma}}\mathcal{K}_{g_0}(u \phi) \quad \mbox{for all }\phi\in L^1(M).
\]
This motivates us to define
\be \label{eq:q-k}
Q_{K_0}^g: =\mathcal{K}_{g}(1)
\ee
and
\be \label{eq:total-q-k}
F_{K_0}[g]:= \mathrm{Vol}_g (M)^{-\frac{n+2\sigma}{n}} \int_{M} Q_{K_0}^g \,\ud vol_g.
\ee
We may call $Q_{K_0}^g$ a \textit{dual $Q$ curvature}, which is related to a quantity introduced by Zhu \cite{Z1} and Han-Zhu \cite{HZ}  for a specific kernel, although they are distinct.
The normalized $L^2$ gradient flow of $F_{K_0}[\cdot]$ takes the form
\be \label{eq:metric-version}
\pa_t g= (Q_{K_0}^g-  a(t)  )g,
\ee
where $t\ge 0$ and $a(t)=\int_{M}Q_{K_0}^g \,\ud vol_g$. By setting $g=u^{\frac{4}{n+2\sigma}}g_0$, we obtain
\be \label{eq:main}
\pa_t u^{\frac{n-2\sigma}{n+2\sigma }}=\mathcal{K}_{g_0}(u) - a(t)  u^{\frac{n-2\sigma}{n+2\sigma }}.
\ee
Suppose that
\be \label{eq:initial-data}
u(\cdot, 0)= u_0(\cdot) \ge 0 \quad \mbox{on }M,
\ee
and $u_0\in C^0(M) $ is not identical to zero.

If $K_0$ matches with the Green's function of some invertible conformally invariant differential operator, we may call \eqref{eq:metric-version} (or \eqref{eq:main}) a \textit{dual Yamabe flow} as it stems from the dual type functional $F_{K_0}[g]$.

On the standard $n$-sphere $\Sn$ equipped with the induced metric $g_0$ from $\R^{n+1}$, there defines the intertwining operator $ P_\sigma^{g_0}$ which is the pull-back operator of  the $\sigma\in(0,n/2)$ power of the Laplacian $(-\Delta)^{\sigma}$ on $\R^n$ via the stereographic projection:
\[
P_\sigma^{g_0}(\phi)\circ F=  |J_F|^{-\frac{n+2\sigma}{2n}}(-\Delta)^\sigma(|J_F|^{\frac{n-2\sigma}{2n}}(\phi\circ F)),
\]
where $F$ is the inverse of the stereographic projection and $|J_F|$ is the determinant of the Jacobian of $F$. Moreover,
\be\label{eq:p-inverse}
(P_{\sigma}^{g_0})^{-1}(f)(\xi)=c_{n,\sigma}\int_{\Sn}\frac{f(\zeta)}{\abs{\xi-\zeta}^{n-2\sigma}}\,\ud vol_{g_{0}}(\zeta)\quad \mbox{for }f\in L^1(\Sn),
\ee
where $c_{n,\sigma}=\frac{\Gamma(\frac{n-2\sigma}{2})}{2^{2\sigma}\pi^{n/2}\Gamma (\sigma)}$, $p\ge 1$ and $|\cdot|$ is the Euclidean distance in $\R^{n+1}$.
The classical result  of Lieb \cite{Lieb83}  asserts that
\be\label{eq:lieb}
\sup \left\{\int_{\Sn} f (P_{\sigma}^{g_0})^{-1} (f) \,\ud vol_{g_0}  : \int_{\Sn} |f|^{\frac{2n}{n+2\sigma}} \,\ud vol_{g_0} =1\right\}:=S_{n,-\sigma}
\ee
is achieved and the maximizers have to be form
\be\label{eq:bubble}
\bar U_{\xi_0,\lda }( \xi ) = \left(\frac{2\lda }{2+ (\lda^2-1) (1 - \cos d_{g_0}(\xi, \xi_0) )   } \right)^{\frac{n+2\sigma}{2}} \quad \mbox{for some }\lda>0, ~\xi_0\in \Sn
\ee
upon a sign.
Finally, we remark that
by taking $\mathcal{K}_{g_0}= (P_{\sigma}^{g_0})^{-1}$ and using the stereographic projection, \eqref{eq:main} can be transformed into the prototype
\[
\pa_tv^{\frac{n-2\sigma}{n+2\sigma}} =(-\Delta)^{-\sigma} v-a(t) v^{\frac{n-2\sigma}{n+2\sigma}} \quad \mbox{in }\R^n\times(0,\infty),
\]
where $v=|J_F|^{\frac{n-2\sigma}{2n}}(u\circ F)$.

Our second theorem is as follows.

\begin{thm}\label{thm:cc} The Cauchy problem \eqref{eq:main}-\eqref{eq:initial-data} has  a unique solution satisfying $u^{\frac{n-2\sigma}{n+2\sigma}}\in C^1([0,\infty);C^0(M))$. Moreover,
 as $t\to \infty$,
\[
a(t) \to a_\infty \quad \mbox{for some positive constant }a_\infty,
\]
and
\[
\int_{M} |Q_{K_0}^g-  a(t)|^q\,\ud vol_g \to 0\quad \mbox{for each }1\le q\le \frac{2n}{n-2\sigma}+\frac{n+2\sigma}{n-2\sigma}.
\]
\end{thm}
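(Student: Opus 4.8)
The plan is to rewrite \eqref{eq:main} as a scalar integral flow, extract the monotone quantity coming from its gradient-flow structure, and then convert the resulting time-integrability into the asserted pointwise-in-time decay. Put $m=\frac{n-2\sigma}{n+2\sigma}$, $p_*=\frac{2n}{n-2\sigma}$ and $w=u^{m}$, so that $p_*-1=\frac1m$, $u=w^{p_*-1}$, $\ud vol_g=w^{p_*}\,\ud vol_{g_0}$, and, by \eqref{eq:q-k}, $Q_{K_0}^g=w^{-1}\mathcal{K}_{g_0}(w^{p_*-1})$. Then \eqref{eq:main} reads
\[
\pa_t w=\mathcal{K}_{g_0}\!\big(w^{\,p_*-1}\big)-a(t)\,w=\big(Q_{K_0}^g-a(t)\big)\,w\qquad\text{on }M\times(0,\infty),
\]
i.e. $\pa_t\log w=Q_{K_0}^g-a(t)$. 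Local existence and uniqueness of a solution with $w\in C^1([0,T^*);C^0(M))$ follow from the Riesz potential estimates furnished by \eqref{eq:K-1}--\eqref{eq:K-3} and the Picard--Lindel\"of theorem, exactly as for \eqref{eq:IP}; writing $w$ by Duhamel's formula and using $K_0>0$, $u_0\ge0$, $u_0\not\equiv0$ gives $w(\cdot,t)>0$ for $t>0$, with a positive lower bound depending on the solution and on $t$. Along the flow $\mathrm{Vol}_g(M)$ is constant; normalizing it to $1$, we have $a(t)=\int_M u\,\mathcal{K}_{g_0}(u)\,\ud vol_{g_0}$, which the Hardy--Littlewood--Sobolev inequality on $M$ bounds by $C\|u\|_{L^{2n/(n+2\sigma)}}^2=C\,\mathrm{Vol}_g(M)^{(n+2\sigma)/n}=C$, while $a(0)=\int_M u_0\,\mathcal{K}_{g_0}(u_0)>0$.

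Differentiating $a(t)=\int_M u\,\mathcal{K}_{g_0}(u)$ in $t$, using the symmetry of $K_0$, $\pa_t u=(p_*-1)w^{p_*-2}\pa_t w$, $\mathcal{K}_{g_0}(u)=\pa_t w+a(t)w$ and $\int_M w^{p_*-1}\pa_t w=\tfrac1{p_*}\tfrac{d}{dt}\mathrm{Vol}_g(M)=0$, one obtains the basic energy identity
\[
a'(t)=2(p_*-1)\int_M w^{\,p_*-2}(\pa_t w)^2\,\ud vol_{g_0}=2(p_*-1)\int_M\big|Q_{K_0}^g-a(t)\big|^2\,\ud vol_g\ \ge\ 0 .
\]
Hence $a$ is nondecreasing and bounded, so $a(t)\to a_\infty$ for some $a_\infty\ge a(0)>0$, and $\int_0^\infty\!\int_M|Q_{K_0}^g-a(t)|^2\,\ud vol_g\,dt<\infty$. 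To get $T^*=\infty$ it remains to bound $\|w(\cdot,t)\|_{C^0(M)}$ on finite time intervals, which follows from a priori estimates --- testing the equation against $w^{r-1}$ and combining $\|\mathcal{K}_{g_0}(w^{p_*-1})\|_{L^{p_*}}\le C\|w\|_{L^{p_*}}^{p_*-1}$ with the conservation of $\|w\|_{L^{p_*}}$ --- in the spirit of \cite{GM} and the porous-medium literature cited after Theorem \ref{thm:main0}. This settles the first two assertions.

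For the concentration--compactness statement, the starting point is the identity, valid for all $1\le q\le 2p_*-1=\frac{3n+2\sigma}{n-2\sigma}$,
\[
\int_M\big|Q_{K_0}^g-a(t)\big|^q\,\ud vol_g=\int_M|\pa_t\log w|^q\,\ud vol_g=\int_M|\pa_t w|^q\,w^{\,p_*-q}\,\ud vol_{g_0}=:I_q(t).
\]
H\"older and the a priori bounds give $I_q(t)\le C$ uniformly for $1\le q\le p_*$, and $I_q(t)\le\mathrm{Vol}_g(M)^{1-q/2}I_2(t)^{q/2}$ for $1\le q\le2$. The plan is: (a) upgrade $\int_0^\infty I_2(t)\,dt<\infty$ to $I_2(t)\to0$; (b) establish $\sup_{t\ge0}I_q(t)<\infty$ for $p_*<q\le 2p_*-1$; (c) interpolate. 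For (a), differentiate the flow in $t$; with $h:=\pa_t w$ and $\int_M w^{p_*-1}h=0$,
\[
I_2'(t)=(p_*-2)\int_M w^{\,p_*-3}h^3\,\ud vol_{g_0}+2(p_*-1)\int_M w^{\,p_*-2}h\,\mathcal{K}_{g_0}\!\big(w^{\,p_*-2}h\big)\,\ud vol_{g_0}-2a(t)I_2(t).
\]
The last term is nonpositive; the middle term is controlled by HLS through $\|w^{p_*-2}h\|_{L^{2n/(n+2\sigma)}}\le a(t)\|u\|_{L^{2n/(n+2\sigma)}}+\|w\|_{L^{p_*}}^{p_*-2}\|\mathcal{K}_{g_0}(u)\|_{L^{p_*}}\le C$; and the first term equals $\pm(p_*-2)I_3(t)$, which by interpolating $Q_{K_0}^g-a(t)$ in $L^q(\ud vol_g)$ between $q=2$ and $q=2p_*-1$ (using (b)) is $\le C\,I_2(t)^{\theta}$ for some $\theta=\theta(n,\sigma)\in(0,1)$; since also $I_2(t)\le C$, we get $I_2'(t)\le C$ uniformly, which with $\int_0^\infty I_2(t)\,dt<\infty$ forces $I_2(t)\to0$. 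Granting (a) and (b): for $2<q<2p_*-1$ interpolation gives $I_q(t)\le I_2(t)^{\lambda_q}I_{2p_*-1}(t)^{1-\lambda_q}\to0$; for $1\le q\le2$ use the displayed bound; and the endpoint $q=2p_*-1$ is treated directly by splitting $|\pa_t w|\le\mathcal{K}_{g_0}(u)+a(t)w$, the $a(t)w$-part contributing $a(t)^{2p_*-1}\mathrm{Vol}_g(M)$ modulo terms handled by (a), while the $\mathcal{K}_{g_0}(u)$-part is controlled using the pointwise lower bound on $w$ and $a(t)\to a_\infty$.

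The two genuinely technical points are (a) --- upgrading the time-integrability of the ``energy density'' $I_2$ to pointwise decay, which is the familiar difficulty in convergence theorems for geometric flows, addressed here by the second-order energy estimate above, whose delicate ingredient is controlling the nonlocal self-interaction $\int_M w^{p_*-2}h\,\mathcal{K}_{g_0}(w^{p_*-2}h)$ with only critical-exponent integrability available --- and (b), the uniform $L^{2p_*-1}(\ud vol_g)$ bound on $Q_{K_0}^g-a(t)$, for which one must exploit the explicit nonlocal structure $Q_{K_0}^g=w^{-1}\mathcal{K}_{g_0}(w^{p_*-1})$ together with the positivity lower bound on $w$, rather than soft functional inequalities alone.
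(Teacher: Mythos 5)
Your first half is correct and matches the paper: the Duhamel representation for $w=u^m$, volume conservation, the identity $a'(t)=\tfrac{2(n+2\sigma)}{n-2\sigma}\int_M|Q_{K_0}^g-a|^2\,\ud vol_g\ge 0$ together with $a(0)\le a(t)\le \bar J_{\frac{n-2\sigma}{n+2\sigma}}$ (giving $a(t)\to a_\infty>0$), and global existence from the $L^{2n/(n+2\sigma)}$ conservation plus Riesz potential estimates are essentially Lemmas \ref{lem:properties}--\ref{lem:global-exist}. Your second-order energy identity for $I_2'$ is also correct and coincides with the paper's (\ref{eq:M-q}) specialised to $q=2$, and your observation that $I_{q}(t)\le C$ uniformly for $q\le p_*$ does follow from $u(Q_{K_0}^g-a)=\mathcal{K}_{g_0}(u)-au^{m}\in L^{p_*}(M)$ by HLS and $V(t)=1$.

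The gap is in step (b), and (a) depends on it. You plan to show $\sup_{t}\int_M|Q_{K_0}^g-a|^{2p_*-1}\,\ud vol_g<\infty$ by using ``the positivity lower bound on $w$.'' No such uniform lower bound exists in the setting of Theorem \ref{thm:cc}: from $\pa_t\bigl(e^{\int_0^ta}w\bigr)=e^{\int_0^ta}\mathcal{K}_{g_0}(u)\ge 0$ one only gets $w(X,t)\ge w(X,1)\,e^{-\int_1^ta}\ge c(X)\,e^{-\bar J(t-1)}$, which can decay exponentially; a time-uniform pointwise lower bound on $u$ is available only via the differential Harnack inequality of Theorem \ref{thm:harnack}, which is specific to $\Sn$ (and certain locally conformally flat manifolds) and lies outside Theorem \ref{thm:cc}. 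Without (b), the interpolation $I_3\le I_2^{\theta}I_{2p_*-1}^{1-\theta}$ in (a) is not justified, so the chain (a)$\Rightarrow$(c) does not close. The paper avoids this entirely: Proposition \ref{prop:cc-0} differentiates $M_q=\int_M|Q_{K_0}^g-a|^q\,\ud vol_g$ directly, controls the nonlocal term $N_q$ by HLS and Young's inequality as in (\ref{eq:riesz-curv})--(\ref{eq:riesz-curv-a}), keeps the favourable cubic term $(\tfrac{2n}{n-2\sigma}-q)\int|Q_{K_0}^g-a|^q(Q_{K_0}^g-a)\,\ud vol_g$ via (\ref{eq:good-term}), and then iterates upward in $q$ from $q=2$, obtaining $M_{q+1}\in L^1([1,\infty))$ and $M_q\to0$ at each stage; the argument changes character at $q=p_*$ where the coefficient $\tfrac{2n}{n-2\sigma}-q$ switches sign and the $M_{q+1}$ term becomes dissipative. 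This bootstrap requires no pointwise bounds on $u$. To repair your proof you would either have to prove (b) without a lower bound on $w$ (which appears to require essentially the same iteration the paper uses), or replace (a)--(c) by the paper's iterative scheme.
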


This theorem guarantees that the flow is a Palais-Smale flow-line of the associated critical functional; see Corollary \ref{cor:PS}.  Consequently, following Struwe \cite{St} one can show bubbling or global compactness if
\be\label{eq:pole-end}
\lim_{X\to Y} d_{g_0} (X,Y)^{n-2\sigma} K_0(X,Y)=c \quad \mbox{uniformly for }Y\in M, \tag{K-4}
\ee
where $\frac{1}{\Lda}\le c\le \Lda$ is a constant, and without loss of generality we assume $c=c_{n,\sigma}$.

Finally, we state a convergence result on $\Sn$.

\begin{thm}\label{thm:main1} Suppose that $M=\Sn$ and $\mathcal{K}_{g_0}= (P_{\sigma}^{g_0})^{-1}$ as in \eqref{eq:p-inverse}. If $u$ is a positive solution of \eqref{eq:main} on $\Sn\times (0,\infty)$ and $u(0)^{\frac{n-2\sigma}{n+2\sigma}}\in C^1(\Sn)$,  then $u\in C^1(\Sn \times (0,\infty))$ and
\[
\lim_{t\to \infty}\|u(\cdot,t) -  c \bar U_{\xi_0,\lda}( \cdot)\|_{C^0(M)} =0 \quad \mbox{on }\Sn\times [1,\infty),
\]
where $c>0$ is constant and $\bar U_{\xi_0,\lda}( \cdot)$ is the function in \eqref{eq:bubble}.
\end{thm}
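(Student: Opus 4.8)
The plan is a gradient-flow argument for the normalized Hardy--Littlewood--Sobolev functional on $\Sn$: monotonicity of $F_{K_0}$ along \eqref{eq:main} together with Lieb's sharp inequality \eqref{eq:lieb} drives the flow toward the set of maximizers, the Struwe-type concentration--compactness for Palais--Smale sequences identifies that set with the bubble family \eqref{eq:bubble}, and a {\L}ojasiewicz--Simon inequality (uniform because of the conformal symmetry) upgrades subsequential convergence to convergence as $t\to\infty$. Concretely, normalize $\mathrm{Vol}_{g(0)}(\Sn)=1$ --- preserved by \eqref{eq:metric-version} --- so that $a(t)=F_{K_0}[g(t)]=\int_{\Sn}u\,\mathcal{K}_{g_0}(u)\,\ud vol_{g_0}=:N(t)$. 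Differentiating along \eqref{eq:main} and using \eqref{eq:K-1} gives
\[
\frac{\ud}{\ud t}a(t)=\frac{2}{p}\Big(\int_{\Sn}u^{1-p}\big(\mathcal{K}_{g_0}(u)\big)^{2}\,\ud vol_{g_0}-N(t)^{2}\Big)\ge 0,\qquad p:=\tfrac{n-2\sigma}{n+2\sigma},
\]
by Cauchy--Schwarz for $u\,\mathcal{K}_{g_0}(u)=u^{\frac{1+p}{2}}\cdot\big(u^{\frac{1-p}{2}}\mathcal{K}_{g_0}(u)\big)$ with $\int u^{1+p}\,\ud vol_{g_0}=1$, equality holding only at solutions of \eqref{eq:st0}. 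By \eqref{eq:lieb}, $a(t)\le S_{n,-\sigma}$, so $a(t)$ increases to some $a_\infty\le S_{n,-\sigma}$ and the defect $D(t):=\int u^{1-p}(\mathcal{K}_{g_0}u)^{2}\,\ud vol_{g_0}-N(t)^{2}$ is integrable over $(0,\infty)$; in particular (Theorem \ref{thm:cc} and Corollary \ref{cor:PS}) along any $t_{k}\to\infty$ a subsequence of $u(\cdot,t_{k})$ is Palais--Smale for $F_{K_0}$ at level $a_\infty$.

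On $\Sn$ with $\mathcal{K}_{g_0}=(P_\sigma^{g_0})^{-1}$, a positive solution $S$ of $\mathcal{K}_{g_0}(S)=a_\infty S^{p}$ corresponds via $S=v^{\frac{n+2\sigma}{n-2\sigma}}$ to a positive solution of $P_\sigma^{g_0}v=\mathrm{const}\cdot v^{\frac{n+2\sigma}{n-2\sigma}}$; pulling back by the stereographic projection and using the classification of positive solutions of $(-\Delta)^{\sigma}$ at the critical exponent, $v$ is a standard bubble, hence $S=c\,\bar U_{\xi_0,\lda}$ for some $c>0$: every positive critical point of $F_{K_0}$ on $\Sn$ is a maximizer. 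Applying the Struwe-type decomposition to the Palais--Smale sequence $u(\cdot,t_{k})$ (legitimate since \eqref{eq:pole-end} holds for $(P_\sigma^{g_0})^{-1}$) writes $u(\cdot,t_{k})=u_\infty+\sum_{j=1}^{m}(\text{rescaled bubbles})+o(1)$ with $u_\infty$ a possibly zero steady state; the mass constraint together with strict concavity of $s\mapsto s^{(n+2\sigma)/n}$ forces $a_\infty=S_{n,-\sigma}$ and exactly one nonzero term carrying the full mass. Riesz-potential regularity then upgrades this to $\mathrm{dist}_{C^{0}(\Sn)}\big(u(\cdot,t),\{c\,\bar U_{\xi_0,\lda}\}\big)\to 0$ as $t\to\infty$.

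The bubbles \eqref{eq:bubble} form a single orbit of the conformal group of $\Sn$, along which $F_{K_0}$ is invariant, so they constitute a non-degenerate critical manifold of $F_{K_0}$ --- the kernel of the linearized operator (analyzed via Section \ref{s:conver}) is exactly its tangent space --- and by this homogeneity a {\L}ojasiewicz--Simon inequality $\|F_{K_0}'(u)\|\ge c\,|F_{K_0}[u]-S_{n,-\sigma}|^{1-\theta}$, $\theta\in(0,\tfrac12]$, holds near the manifold with $c,\theta$ \emph{uniform} in the bubble parameters. Combining this with $\frac{\ud}{\ud t}a=\frac{2}{p}D\sim\|F_{K_0}'(u)\|^{2}$ and $a(t)\to S_{n,-\sigma}$ in the standard way (differentiating $(S_{n,-\sigma}-a)^{\theta}$) yields $\int_{1}^{\infty}\|\pa_{t}u(\cdot,t)\|_{C^{0}}\,\ud t<\infty$, so $u(\cdot,t)$ is Cauchy in $C^{0}(\Sn)$ and converges to a steady state, which must be $c\,\bar U_{\xi_0,\lda}$ for a \emph{finite} $\lda$; $u\in C^{1}(\Sn\times(0,\infty))$ follows by bootstrapping \eqref{eq:IP}.

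I expect the last step to be the main obstacle: establishing the {\L}ojasiewicz--Simon (equivalently, modulation) estimate near the \emph{non-compact} critical manifold, with constants independent of the bubble parameters, and closing the trajectory-length estimate in $C^{0}$. This is exactly what rules out the flow escaping to infinity along the conformal group --- a degeneracy the energy cannot detect on $\Sn$ --- and it rests on the two structural inputs above: non-degeneracy of the bubbles from the spectral analysis of Section \ref{s:conver}, and the conformal homogeneity of the bubble family. On locally conformally flat manifolds concentration is instead excluded by a positive-mass argument in the spirit of the resolution of the Yamabe problem.
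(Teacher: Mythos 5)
Your proposal and the paper take genuinely different routes, and yours has a gap at exactly the point you flag as ``the main obstacle.''

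The paper's proof is short because its heavy lifting is done earlier by Theorem \ref{thm:harnack}: on $\Sn$ with $\mathcal{K}_{g_0}=(P_\sigma^{g_0})^{-1}$, a moving-spheres argument applied to the time-dependent Kelvin transform $v_{x_0,\lambda}(x,t)$ (using the evolution identity \eqref{eq:reflect-1} and the sign structure of the kernel $K(x_0,\lambda;x,z)$) yields the differential Harnack inequality $|\nabla\ln u|\le C$ on $\Sn\times[1,\infty)$, hence $1/C\le u\le C$ since the flow preserves $\int u^{2n/(n+2\sigma)}$. Once these two-sided a priori bounds are in hand, Theorem \ref{thm:conditional} applies: one works in $\om_C=\{f:\tfrac1C\le f\le C\}$, where the energy $G(\cdot)$ is real-analytic and the {\L}ojasiewicz--Simon inequality holds near a \emph{bounded} set of steady states, giving full convergence. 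The steady state is then a positive critical point of the HLS functional, classified via Chen--Li--Ou as a bubble.

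Your proposal skips the a priori bound and tries to substitute a {\L}ojasiewicz--Simon inequality near the \emph{non-compact} bubble manifold, uniform in the bubble parameters. That is where the argument breaks: the standard {\L}ojasiewicz--Simon machinery gives a local inequality near a fixed critical point (or a compact critical manifold); uniformity as $\lambda\to\infty$ is not automatic from conformal homogeneity, because the $C^0$ and $L^p$ topologies used in the trajectory-length estimate are \emph{not} conformally invariant --- the bubble $\bar U_{\xi_0,\lambda}$ itself blows up in $C^0$ as $\lambda\to\infty$. Without a separate mechanism to preclude the flow sliding off to infinity along the conformal orbit, the chain ``energy gap integrable $\Rightarrow$ trajectory length finite in $C^0$ $\Rightarrow$ limit exists with finite $\lambda$'' is circular, since the conclusion ``finite $\lambda$'' is precisely what must be proved. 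The Struwe decomposition you invoke gives convergence only \emph{modulo} conformal transformations along subsequences and cannot, by itself, pin $\lambda$. You correctly identify this as the sticking point; the paper's answer is that the differential Harnack inequality (Theorem \ref{thm:harnack}), proved by the moving-spheres method in the integral setting in the spirit of \cite{Ye}, \cite{CLO}, \cite{Li04}, is precisely the structural input that rules out escape to infinity --- and it makes the subsequent {\L}ojasiewicz--Simon step routine on the bounded set $\om_C$. A secondary point: the strict sign in your identity $\tfrac{\ud}{\ud t}a\ge 0$ and the identification $\tfrac{\ud}{\ud t}a\sim\|F_{K_0}'(u)\|^2$ also requires the uniform $u$-bounds to relate the weighted $L^2$ defect $\int u^{1-p}(\mathcal{K}_{g_0}u-au^p)^2$ to the dual norm of $F_{K_0}'$, another place the a priori Harnack estimate is silently being used.
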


A crucial step in the proof of the above theorem is establishing a differential Harnack inequality, which is inspired by Ye \cite{Ye}. However, in the current integral framework, we need to borrow ideas from Chen-Li-Ou \cite{CLO} and Li \cite{Li04}. The same differential Harnack inequality also holds on the family of locally conformally flat manifolds considered by Qing-Raske \cite{QR}. To prevent distractions, we exclusively focus on spherical objects.

The limiting case $2\sigma=n$ would involve a kernel having the rate of  $\ln d_{g_0}(X,Y)$, which deserves further exploration. In light of Dou-Zhu \cite{DZ}, it is intriguing to investigate the case when $2\sigma>n$. The above  differential-integral equations may be viewed as porous medium type equations with Riesz potentials diffusion. In contrast to the (fractional) Laplacian diffusion,
 \be\label{eq:strock-f}
\int_{M}|f|^{p-1} f\mathcal{K}_{g_0} (f)\,\ud vol_{g_0} \quad \mbox{may change signs, if }p>1.
 \ee
We further refer to  Bonforte-Endal \cite{BE} for the recent studies of  weak dual solutions of porous medium type equations, where Riesz potential estimates also play an important role.

\medskip

The paper is structured as follows. In Section \ref{s:SS}, we introduce a class of separable solutions that serve as a guiding principle for Theorem \ref{thm:main0}. These solutions are crucial in the proof of Theorem \ref{thm:main0}. In Section \ref{s:ER}, we demonstrate the existence of local solutions to \eqref{eq:IP} with $u(t)\in C(M)$ and provide a blow-up criterion that holds for all $m>0$. For future applications, we establish a regularity theorem for mild solutions, inspired by some idea of Li \cite{Li04}. In Section \ref{s:blt}, we establish crucial lower and upper bounds. Section \ref{s:creg} is dedicated to the critical case. In the first subsection, we prove Theorem \ref{thm:cc}. In the second one, we establish a differential Harnack inequality. In Section \ref{s:conver}, we prove a convergence theorem for all $m>0$ if the solutions are bounded between positive constants. The proofs of Theorem \ref{thm:main0} and Theorem \ref{thm:main1} are completed there.

\medskip

\textbf{Notations}:  Letters $x,y,z$  represent points in $\R^n$, and capital letters $X,Y,Z$ represent points on Riemannian manifolds.  Denote by $B_r(x)\subset \R^n$ the ball centered at $x$ with radius $r>0$. We may write $B_r$ in replace of $B_r(0)$ for brevity. For $X\in M$, $\B_{\delta}(X)$ denotes the geodesic ball centered at $X$ with radius $\delta$.
When it is clear from the context, we may use  $u(t)=u(\cdot, t)$ for instance.
Throughout the paper, constants $C>0$ in inequalities  may vary from line to line and are universal, which means they depend on given quantities but not on solutions.

\medskip

\noindent\textbf{Acknowledgement:} The author would like to express his gratitude to Tianling Jin at HKUST for the friendship and fruitful collaboration which has inspired some of the ideas in the current paper.

\section{Separable solutions}
\label{s:SS}

Let $K_0: M\times M\to (0,\infty]$ satisfy \eqref{eq:K-1}, \eqref{eq:K-2} and \eqref{eq:K-3}. We consider the functional
\be \label{eq:Jm}
J_m(f)= \frac{\int_{M} f \mathcal{K}_{g_0}(f)\,\ud vol_{g_0}}{(\int_{M} |f|^{m+1}\,\ud vol_{g_0})^{\frac{2}{m+1}}}
\ee and  the variational problem
\[
\bar J_m=\sup_{f\in L^{m+1}(M)\setminus \{ 0\}} J_m(f)>0.
\]
By the Hardy-Littlewood-Sobolev inequality, $\bar J_m<\infty$ if $m\ge \frac{n-2\sigma}{n+2\sigma}$.

\begin{prop}\label{prop:subcritical} If $m>\frac{n-2\sigma}{n+2\sigma}$, then $\bar J_m$ is achieved by a positive H\"older  continuous function.

\end{prop}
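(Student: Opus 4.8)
The plan is to solve the variational problem $\bar J_m$ directly, using the Hardy–Littlewood–Sobolev (HLS) inequality to control the numerator and a concentration–free argument valid precisely because $m$ is strictly \emph{supercritical} (i.e.\ $m>\frac{n-2\sigma}{n+2\sigma}$). First I would reduce to nonnegative competitors: since $\mathcal K_{g_0}$ has a positive kernel, $\int_M f\,\mathcal K_{g_0}(f)\,\ud vol_{g_0}\le \int_M |f|\,\mathcal K_{g_0}(|f|)\,\ud vol_{g_0}$ while the denominator is unchanged, so any maximizing sequence may be taken with $f_j\ge 0$ and, by homogeneity, normalized so that $\|f_j\|_{L^{m+1}(M)}=1$. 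Then $\int_M f_j\,\mathcal K_{g_0}(f_j)\,\ud vol_{g_0}\to \bar J_m>0$.

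The heart of the matter is compactness of $(f_j)$. After passing to a subsequence, $f_j\rightharpoonup f$ weakly in $L^{m+1}(M)$ for some $f\ge 0$ with $\|f\|_{L^{m+1}}\le 1$. The key analytic input is that the bilinear form $(\phi,\psi)\mapsto \int_M \phi\,\mathcal K_{g_0}(\psi)\,\ud vol_{g_0}$ is \emph{compact} on $L^{m+1}(M)$ when $m>\frac{n-2\sigma}{n+2\sigma}$: indeed, by \eqref{eq:K-2} and the HLS inequality on the compact manifold $M$, $\mathcal K_{g_0}$ maps $L^{m+1}(M)$ into $L^{r}(M)$ for some $r>\frac{m+1}{m}=(L^{m+1})'$ (this is exactly where the strict inequality on $m$ is used — in the borderline case $r$ would only equal the dual exponent), and the gradient bound \eqref{eq:K-3} upgrades the image into a fractional Sobolev space that embeds compactly into $L^{(m+1)'}(M)$ on the compact manifold. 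Hence $\mathcal K_{g_0}(f_j)\to \mathcal K_{g_0}(f)$ strongly in $L^{(m+1)'}(M)$, so $\int_M f_j\,\mathcal K_{g_0}(f_j)\,\ud vol_{g_0}\to \int_M f\,\mathcal K_{g_0}(f)\,\ud vol_{g_0}=\bar J_m$, forcing $f\not\equiv 0$. Combined with $\|f\|_{L^{m+1}}\le 1$ and the definition of $\bar J_m$ this yields $\|f\|_{L^{m+1}}=1$ and $J_m(f)=\bar J_m$, so $f$ is a maximizer; replacing $f$ by a suitable scalar multiple $S=\lambda f$ produces a solution of the Euler–Lagrange equation, which after absorbing constants is exactly $\mathcal K_{g_0}(S)=S^m$ with $S\ge 0$, $S\not\equiv 0$.

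It then remains to show positivity and Hölder continuity of the maximizer. Positivity follows from the Euler–Lagrange equation and the strict positivity of the kernel: $S^m=\mathcal K_{g_0}(S)=\int_M K_0(\cdot,Y)S(Y)\,\ud vol_{g_0}(Y)>0$ everywhere since $K_0>0$ and $S\ge 0$ is not identically zero, hence $S>0$ on $M$. For regularity I would run a bootstrap on $S^m=\mathcal K_{g_0}(S)$: starting from $S\in L^{m+1}(M)$, Riesz potential (HLS) estimates from \eqref{eq:K-2} improve the integrability of $\mathcal K_{g_0}(S)$, hence of $S^m$, hence of $S=(S^m)^{1/m}$, and iterating finitely many times (the gain at each step is a fixed power since $2\sigma>0$) lands $S$ in $L^p(M)$ for all $p<\infty$; then $\mathcal K_{g_0}(S)$ is bounded and, using the gradient estimate \eqref{eq:K-3}, in fact Hölder continuous (a standard Riesz potential estimate, as in the regularity results invoked in Section~\ref{s:ER}), and therefore $S^m$ and $S$ are Hölder continuous.

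The main obstacle is the compactness step: one must verify carefully that the mapping properties of $\mathcal K_{g_0}$ on the compact manifold $M$, derived only from the pointwise kernel bounds \eqref{eq:K-2}–\eqref{eq:K-3}, genuinely produce a \emph{compact} operator $L^{m+1}(M)\to L^{(m+1)'}(M)$ when $m>\frac{n-2\sigma}{n+2\sigma}$ — equivalently, that maximizing sequences cannot concentrate at a point or split. This is the only place the strict inequality enters, and it is what distinguishes Proposition~\ref{prop:subcritical} from the delicate critical case $m=\frac{n-2\sigma}{n+2\sigma}$ treated later via concentration–compactness.
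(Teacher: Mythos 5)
Your proposal follows essentially the same route as the paper: normalize and take a nonnegative maximizing sequence, pass to a weak limit, upgrade $\mathcal K_{g_0}(f_j)\to\mathcal K_{g_0}(f)$ to \emph{strong} convergence in $L^{(m+1)'}(M)$ using that the strict inequality $m>\frac{n-2\sigma}{n+2\sigma}$ puts $(m+1)'$ strictly below the HLS endpoint, conclude $f$ is a maximizer via weak lower semicontinuity of the norm, and then read off positivity and H\"older regularity from the Euler--Lagrange integral equation and Riesz-potential estimates; this is exactly what the paper does (it simply cites \cite{HZ} and \cite{JX20} for the strong convergence rather than proving it in place). The one caveat is your suggested mechanism for the compactness: deriving fractional Sobolev regularity of $\mathcal K_{g_0}(f)$ from the gradient bound \eqref{eq:K-3} only makes sense when $2\sigma>1$, since otherwise the differentiated kernel $d^{2\sigma-n-1}$ is not locally integrable and one cannot bound $\nabla\mathcal K_{g_0}(f)$ by a convergent Riesz potential. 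For general $\sigma\in(0,n/2)$ the standard way to get compactness of the Riesz potential $L^{m+1}\to L^{q}$ for $q$ strictly below the HLS endpoint is to split $K_0$ into a near-diagonal piece (whose operator norm is small, by HLS with a small local integral of $d^{(2\sigma-n)r}$) and a far-from-diagonal bounded piece (which gives a compact, e.g.\ Hilbert--Schmidt, operator); this uses only \eqref{eq:K-2}. With that adjustment your argument is correct and matches the paper's.
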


\begin{proof} Let $\{f_j\}_{j=1}^\infty$ be a maximizing sequence with $\|f_j\|_{L^{m+1}(M)}=1$
and
\be \label{eq:max-ass-1}
\lim_{j\to \infty}\int_{M} f_j \mathcal{K}_{g_0}(f_j)\,\ud vol_{g_0} = \bar J_m.
\ee
We may assume that $f_j$ are nonnegative and
\[
f_j \rightharpoonup f \quad \mbox{in }L^{m+1}(M).
\]
After passing to a subsequence (still denoted by $\{f_j\}$), we have
\[
\mathcal{K}_{g_0}(f_j) \to \mathcal{K}_{g_0}(f) \quad \mbox{in }L^q(M)
\]
for any $1\le q<\frac{n(m+1)}{(n-2\sigma(m+1))_+}$. See for instance the proof of Proposition 1.1 of \cite{HZ} or Proposition 5.1 of \cite{JX20}.  Since $m>\frac{n-2\sigma}{n+2\sigma}$, we have $\frac{n(m+1)}{(n-2\sigma(m+1))_+} >\frac{m+1}{m}$ with noting that $\frac{m+1}{m}$ is the H\"older conjugate exponent of $m+1$. Thus, by \eqref{eq:max-ass-1},
\[
\int_{M} f_j \mathcal{K}_{g_0}(f_j) \,\ud vol_{g_0}  \to \int_{M} f \mathcal{K}_{g_0}(f) \,\ud vol_{g_0}=\bar J_m .
\]
By the lower semi-continuity of $\|\cdot\|_{L^{m+1}(M)}$, we have $\|f\|_{L^{m+1}(M)}\le 1$ and thus
\[
J(f) \ge \bar J_m.
\]
Hence, $f$ is a maximizer and satisfies the Euler-Lagrange equation
\be\label{eq:steady}
\mathcal{K}_{g_0}(f)=\bar J_m f^{m} \quad \mbox{on }M.
\ee
By the standard potential estimates,  $f^m$ is H\"older continuous.
Since $f \ge 0$ but not identical to zero, $\mathcal{K}_{g_0}(f)$ must be positive everywhere on $M$. The proposition is proved.

\end{proof}

\begin{prop} \label{prop:unique}
If $m>1$, then  \eqref{eq:steady} has a unique positive continuous solution.
\end{prop}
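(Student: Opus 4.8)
The plan is to exploit the super-linearity of $s\mapsto s^m$ (available precisely because $m>1$) together with the pointwise positivity of the kernel recorded in \eqref{eq:K-2}; existence of a positive, in fact H\"older continuous, solution is already supplied by Proposition~\ref{prop:subcritical}, so only uniqueness is at issue.

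Given two positive continuous solutions $f_1,f_2$ of \eqref{eq:steady}, I would first note that, $M$ being compact and the $f_i$ continuous and strictly positive, each $f_i$ lies between two positive constants; hence $f_1/f_2$ is continuous and positive on $M$, and $\vartheta:=\min_M(f_1/f_2)$ is attained at some $X_0\in M$, with $f_1\ge\vartheta f_2$ on $M$ and $f_1(X_0)=\vartheta f_2(X_0)$. Applying $\mathcal{K}_{g_0}$ and using $K_0(X_0,\cdot)>0$ together with $f_1-\vartheta f_2\ge0$ yields $\mathcal{K}_{g_0}(f_1)(X_0)\ge\vartheta\,\mathcal{K}_{g_0}(f_2)(X_0)$, so that \eqref{eq:steady} evaluated at $X_0$ gives
\[
\bar J_m f_1(X_0)^m\ \ge\ \vartheta\,\bar J_m f_2(X_0)^m\ =\ \vartheta\,\bar J_m\bigl(\vartheta^{-1}f_1(X_0)\bigr)^m\ =\ \vartheta^{\,1-m}\,\bar J_m f_1(X_0)^m .
\]
Since $f_1(X_0)>0$ and $\bar J_m>0$, this forces $\vartheta^{\,m-1}\ge1$, whence $\vartheta\ge1$ because $m>1$; that is, $f_1\ge f_2$ on $M$. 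Swapping the roles of $f_1$ and $f_2$ gives the reverse inequality, and therefore $f_1\equiv f_2$.

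I do not expect a genuine obstacle here: all integrals are finite since $f_i\in L^1(M)$ and $f_i^m=\bar J_m^{-1}\mathcal{K}_{g_0}(f_i)$ is bounded by the potential estimates, and the only points requiring care are that the touching point $X_0$ exists (compactness plus strict positivity of the $f_i$) and that the exponent inequality $\vartheta^{\,m-1}\ge 1$ is read off in the right direction, which is exactly where $m>1$ enters. An equivalent route replaces $\vartheta$ by the supremum of those $t>0$ with $f_1\ge tf_2$; the touching-point computation is identical. It is worth remarking that this monotonicity argument breaks down when $m<1$, consistently with the different blow-up picture appearing in Theorem~\ref{thm:main0}(ii).
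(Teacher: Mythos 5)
Your proof is correct and rests on the same extremal-ratio touching-point argument that the paper uses: both slide a constant multiple of one solution against the other, find a touching point by compactness and strict positivity, and then exploit the positivity of $K_0$ together with the super-linearity $m>1$. You evaluate the integral equation directly at the touching point to read off $\vartheta^{m-1}\ge 1$, whereas the paper phrases the same step as a dichotomy (either $\mathcal{K}_{g_0}(\bar\alpha S_2-S_1)>0$ everywhere, contradicting the touching, or the two functions coincide); these are the same mechanism, and your variant has the minor merit of not needing the implicit normalization $\bar\alpha\ge1$ that the paper's displayed inequality silently uses.
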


\begin{proof} Suppose we have two positive continuous  solutions $S_1 $ and $S_2$.
Let
\[
\bar \al= \inf\{\al>0: S_1\le \al S_2 \quad \mbox{on }M\}.
\]
Then $S_1\le \bar \al S_2$ and $S_1$ is equal to $\bar  \al S_2$ at some points of $M$. Since the kernel $K_0(\cdot, \cdot)$ is positive,  it follows that either \[
\mathcal{K}_{g_0}(\bar \al S_2-S_1) > 0   \quad \mbox{on }M
\]
or
\[
 \bar \al S_2-S_1\equiv 0.
\]
If the former happens,
\[
(\bar \al S_2)^m-S_1^m =\bar  \al^{m-1}\mathcal{K}_{g_0}(\bar \al S_2)- \mathcal{K}_{g_0}(S_1) \ge \mathcal{K}_{g_0}(\bar \al S_2-S_1) >0 \quad \mbox{everywhere on }M.
\]
We obtain a contradiction.  Hence, $ \bar \al S_2-S_1\equiv 0$. By the equation \eqref{eq:steady}, we must have $\bar \al=1$. The proposition is proved.

\end{proof}

The following proposition extends a criterion of Aubin.

\begin{prop}\label{prop:critial} Suppose that  $K_0$ additionally satisfies \eqref{eq:pole-end}. If
\[
\bar J_{\frac{n-2\sigma}{n+2\sigma}}>S_{n,-\sigma},
\]
where $S_{n,-\sigma}$ is the best constant of Hardy-Littlewood-Sobolev inequality in \eqref{eq:lieb}, then $\bar J_{\frac{n-2\sigma}{n+2\sigma}}$ is achieved by a positive H\"older continuous function.
\end{prop}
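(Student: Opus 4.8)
The plan is to run a concentration--compactness argument on a maximizing sequence, in the spirit of Aubin's treatment of the Yamabe problem: the strict inequality $\bar J_{\frac{n-2\sigma}{n+2\sigma}}>S_{n,-\sigma}$ is exactly the condition that prevents the sequence from splitting off a ``bubble'', so that the surviving mass produces a maximizer. Write $p=m+1=\frac{2n}{n+2\sigma}$ for the critical exponent $m=\frac{n-2\sigma}{n+2\sigma}$, so that $q:=\frac2p=\frac{n+2\sigma}{n}\ge 1$ and $1<p<2$. Pick nonnegative $f_j$ with $\|f_j\|_{L^p(M)}=1$ and $\int_M f_j\,\mathcal{K}_{g_0}(f_j)\,\ud vol_{g_0}\to\bar J:=\bar J_{\frac{n-2\sigma}{n+2\sigma}}$. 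After a subsequence $f_j\rightharpoonup f$ in $L^p(M)$ and $f_j\to f$ a.e., and the Brezis--Lieb lemma gives $1=\|f_j\|_{L^p}^p=\mu+\|f_j-f\|_{L^p}^p+o(1)$ with $\mu:=\|f\|_{L^p(M)}^p\in[0,1]$.

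The structural input, and the step I expect to be the main obstacle, is a profile decomposition for this maximizing sequence (which may first be arranged, via Ekeland's variational principle, to be a Palais--Smale sequence for the constrained functional): along a further subsequence there are points $X_k\in M$, scales $\lambda_j^{(k)}\to0$ and masses $\nu_k>0$ with $\mu+\sum_k\nu_k=1$, such that $f_j$ equals $f$ plus finitely many rescaled extremals of the Euclidean Hardy--Littlewood--Sobolev inequality centred at the $X_k$ plus an $L^p$-negligible remainder, and---crucially---the energy is asymptotically additive,
\[
\int_M f_j\,\mathcal{K}_{g_0}(f_j)\,\ud vol_{g_0}\ \longrightarrow\ \int_M f\,\mathcal{K}_{g_0}(f)\,\ud vol_{g_0}+\sum_k \theta_k .
\]
All mixed interactions vanish in the limit: a ``bulk--bubble'' term tends to $0$ because each bubble has $L^1(M)$-mass tending to $0$ and $\mathcal{K}_{g_0}$ is bounded from $L^p(M)$ to $L^{p'}(M)$ by the Hardy--Littlewood--Sobolev inequality, and ``bubble--bubble'' terms vanish because two bubbles either sit at distinct points (where $K_0$ is bounded, by \eqref{eq:K-2}) or at vastly different scales. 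Such a decomposition---equivalently, a \emph{localized} Hardy--Littlewood--Sobolev inequality near the diagonal with the \emph{sharp} constant---may be obtained following Struwe \cite{St} (cf.\ the discussion after Theorem \ref{thm:cc}), which is why \eqref{eq:pole-end} is assumed and why the normalization $c=c_{n,\sigma}$ there is the relevant one: after blowing up a bubble, \eqref{eq:pole-end} forces the rescaled kernel to converge to $c_{n,\sigma}|x-y|^{2\sigma-n}$, so $\theta_k$ equals $c_{n,\sigma}$ times the Euclidean Riesz energy of an extremal of \eqref{eq:lieb}; since the sharp constant in the double-integral form of \eqref{eq:lieb} is $S_{n,-\sigma}/c_{n,\sigma}$ by Lieb's theorem \cite{Lieb83}, the factors $c_{n,\sigma}$ cancel and $\theta_k=S_{n,-\sigma}\,\nu_k^{2/p}$.

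Granting the decomposition, the conclusion follows from an elementary comparison of energies. By the definition of $\bar J$, $\int_M f\,\mathcal{K}_{g_0}(f)\,\ud vol_{g_0}\le\bar J\,\|f\|_{L^p}^2=\bar J\mu^{2/p}$; since $q=\frac2p\ge1$ we have $\sum_k\nu_k^{q}\le\big(\sum_k\nu_k\big)^{q}=(1-\mu)^{q}$ and $t^{q}\le t$ for $t\in[0,1]$, hence
\[
\bar J=\int_M f\,\mathcal{K}_{g_0}(f)\,\ud vol_{g_0}+\sum_k\theta_k\ \le\ \bar J\mu^{2/p}+S_{n,-\sigma}(1-\mu)^{2/p}\ \le\ \bar J\mu+S_{n,-\sigma}(1-\mu).
\]
If $\mu<1$ this gives $\bar J\le S_{n,-\sigma}$, contradicting the hypothesis; therefore $\mu=1$, there are no bubbles, $\|f_j\|_{L^p}\to\|f\|_{L^p}$, and, $L^p(M)$ being uniformly convex, $f_j\to f$ strongly in $L^p(M)$. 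Thus $f\ge0$, $f\not\equiv0$, achieves $\bar J_{\frac{n-2\sigma}{n+2\sigma}}$ and satisfies the Euler--Lagrange equation \eqref{eq:steady} with $\bar J_m=\bar J$. Finally, by the potential estimates already used in the proof of Proposition \ref{prop:subcritical}, $f^m$ is Hölder continuous, and since $f\ge0$ is not identically zero, $\mathcal{K}_{g_0}(f)>0$ everywhere on $M$, so $f=(f^m)^{1/m}>0$ is Hölder continuous on $M$.
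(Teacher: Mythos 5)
Your argument is correct in its main strategy, but it takes a genuinely different route from the paper. The paper proves Proposition~\ref{prop:critial} by \emph{subcritical approximation}: it chooses exponents $m_i\downarrow\frac{n-2\sigma}{n+2\sigma}$, takes the maximizers $f_i$ for $J_{m_i}$ (which exist by Proposition~\ref{prop:subcritical}), shows $\liminf_i \bar J_{m_i}\ge\bar J_{\frac{n-2\sigma}{n+2\sigma}}$, and then performs a \emph{pointwise} blow-up at the maximum point $X_i$ of $f_i$: if $\max f_i\to\infty$, the rescaling $v_i(x)=f_i(X_i)^{-1}f_i(\exp_{X_i}(f_i(X_i)^{-(1-m_i)/(2\sigma)}x))$ converges locally uniformly (using \eqref{eq:pole-end}) to an entire solution $v$ of the Euclidean HLS integral equation, which is classified by Chen--Li--Ou \cite{CLO}; the Euclidean sharp HLS inequality together with $\int v^{2n/(n+2\sigma)}\le 1$ then forces $\bar J_{\frac{n-2\sigma}{n+2\sigma}}\le\Lambda\le S_{n,-\sigma}$, a contradiction. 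Your proposal instead runs a concentration--compactness / profile-decomposition argument directly at the critical exponent, after upgrading the maximizing sequence to a Palais--Smale sequence via Ekeland, and then uses the Aubin-type strict subadditivity $\bar J\le\bar J\mu+S_{n,-\sigma}(1-\mu)$ (valid because $q=\tfrac{2}{p}=\tfrac{n+2\sigma}{n}>1$) to kill bubbles.

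The tradeoff: your route avoids having to establish existence of subcritical maximizers and is closer to Lions/Struwe orthodoxy, but it hinges on a full profile decomposition with energy additivity for the bilinear Riesz functional $\int_M f\,\mathcal{K}_{g_0}(f)$ on $(M,g_0)$ under \eqref{eq:K-1}--\eqref{eq:K-3} and \eqref{eq:pole-end} --- a nontrivial structural lemma that you flag but do not prove, and which the paper only invokes qualitatively (``following Struwe'') for flow Palais--Smale sequences after Theorem~\ref{thm:cc}, not for this proposition. The paper's subcritical-plus-blow-up approach is therefore the more self-contained one in this setting: it only needs Proposition~\ref{prop:subcritical}, the standard blow-up machinery for integral equations (as in \cite{JLX17,LX19}), and Lieb's classification, not a global compactness/splitting theorem for general sequences. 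Two smaller points worth noting: (1) for a weakly convergent sequence in $L^p$ one does not get a.e.\ convergence for free, so the Brezis--Lieb step as written needs to be folded into the profile decomposition (which supplies the requisite local compactness) rather than stated independently; (2) since you pass to a Palais--Smale sequence, the bubble energies satisfy $\theta_k\le S_{n,-\sigma}\nu_k^{2/p}$ in any case (by the Euclidean sharp HLS inequality), so the argument does not actually need the profiles to be \emph{extremals} --- only the energy additivity and mass bookkeeping are needed, which slightly weakens the structural input you require.
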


A related result has been obtained by \cite{HZ}. Since \eqref{eq:pole-end} holds, one can show that $\bar J_{\frac{n-2\sigma}{n+2\sigma}}\ge S_{n,-\sigma}$, but the strict inequality needs some further conditions, such as ``positive mass" type condition:
\[
K_0(X,Y)=c_{n,\sigma} d_{g_0}(X,Y)+A(X,Y),
\]
where $A>0$ on $M$.

\begin{proof}[Proof of Proposition \ref{prop:critial}]  We shall adapt the blow-up analysis method proof in \cite{JX20} (see Proposition 5.3 there). Let $m_i>\frac{n-2\sigma}{n+2\sigma}$ and $\lim_{i\to \infty} m_i= \frac{n-2\sigma}{n+2\sigma}$.

First, we claim that
\be \label{eq:sup-greater}
\liminf_{i\to \infty} \bar J_{m_i}\ge \bar J_{\frac{n-2\sigma}{n+2\sigma}}.
\ee
Indeed, for any $\va>0$, we choose $\phi \ge 0$ such that
\[
\|\phi\|_{L^{\frac{2n}{n+2\sigma}}(M)}=1, \quad \int_{M} \phi\mathcal{K}_{g_0}(\phi) \,\ud vol_{g_0} \ge  \bar J_{\frac{n-2\sigma}{n+2\sigma}} -\va,
\]
and set
\[
\phi_i=\frac{\phi}{\|\phi\|_{L^{m_i+1}(M)}}.
\]
Then we have
\begin{align*}
\liminf_{i\to \infty} \bar J_{m_i} \ge \liminf_{i\to \infty}  J_{m_i}(\phi_i) \ge \liminf_{i\to \infty} \|\phi\|_{L^{m_i+1}(M)}^{-2} (J_{\frac{n-2\sigma}{n+2\sigma}} -\va).
\end{align*}
Sending $\va\to 0$,  \eqref{eq:sup-greater} follows immediately.

Upon passing to a subsequence, we may assume that $\lim_{i\to \infty}J_{m_i} = \Lda \ge  \bar J_{\frac{n-2\sigma}{n+2\sigma}}$.

Next, let $f_i$ be the positive maximizers obtained in Proposition \ref{prop:subcritical} for $J_{m_i}$, satisfying $\|f_i\|_{L^{m_i+1}}=1$ and \eqref{eq:steady} with $m=m_i$.

We claim that $\{f_i\}$ are uniformly bounded. By Riesz potential estimates, they will be uniformly bounded in some H\"older space. The proposition will be proved by extracting a subsequence.

If not, there is a subsequence, still denoted by $\{f_i\}$, which satisfies
\[
f(X_i):=\max_{M} f_i \to \infty \quad \mbox{as }i\to \infty,
\]
where $X_i\in M$. Choose a geodesic normal coordinates system $\{x^1,\dots, x^n \}$ centered at $X_i$, and write the integral equation as
\[
\bar J_{m_i} f_i (\exp_{X_i} x ) ^{m_i}= \int_{B_{\delta}} K_0 (\exp_{X_i} x,  \exp_{X_i} y)\sqrt{\det  g_0}  f_i (\exp_{X_i} y)\,\ud y+ h_{i}(x),
\]
where $h_i(x)= \int_{M\setminus \mathcal{B}_{\delta}(X_i)} K_0(\exp_{X_i} x ,\zeta) f_i(\zeta)\,\ud vol_{g_0} (\zeta)$ and $\delta>0$ is a small constant. Since $K_0$ additionally satisfies \eqref{eq:pole-end},
following the blow up analysis procedure in the proof of Proposition 2.11 in \cite{JLX17} or Proposition 4.1 in \cite{LX19}, we have, after passing to a subsequence, as $i\to \infty$,
\[
v_i(x) :=\frac{1}{f_i(X_i) } f_i (\exp_{X_i} f_i(X_i)^{-\frac{1-m_i}{2\sigma}} x ) \to v(x) \quad \mbox{in }C_{loc}^0 (\R^n),
\]
for some $v>0$ satisfying
\[
\Lda v(x)^{\frac{n-2\sigma}{n+2\sigma}}=c_{n,\sigma} \int_{\R^n} \frac{v(y)}{|x-y|^{n-2\sigma}}\,\ud y \quad x\in \R^n.
\]
In fact, $v$ is classified in Chen-Li-Ou \cite{CLO}.
It follows that
\[
\Lda \int_{\R^n} v^{\frac{2n}{n+2\sigma}} = c_{n,\sigma} \int_{\R^n\times \R^n} \frac{v(x)v(y)}{|x-y|^{n-2\sigma}}\,\ud y\ud x\le S_{n,-\sigma}  \left(\int_{\R^n} v^{\frac{2n}{n+2\sigma}}\,\ud x\right)^{\frac{n+2\sigma}{n}}.
\]
Since $\|v_i\|_{L^{m_i+1}(B_{\delta \cdot f_i(X_i)^{\frac{1-m_i}{2\sigma}} })}\le \|f_i\|_{L^{m_i+1}(M)}=1$, $\int_{\R^n} v^{\frac{2n}{n+2\sigma}}\le 1$. It follows that
\[
\bar J_{\frac{n-2\sigma}{n+2\sigma}}\le \Lda \le S_{n,-\sigma} ,
\]
which contradicts to the assumption of the proposition. Hence, the claim is verified and  the proposition is proved.
\end{proof}

If $m\neq 1$, then $S=\bar J_m^{\frac{1}{m-1}} f$ is a solution of \eqref{eq:st0}.

Suppose that $S$ is a continuous positive solution of  \eqref{eq:st0} and  $h(t) S$ is a positive separable solution of
\be\label{eq:pme}
\pa_t u^m= \mathcal{K}_{g_0}(u) \quad \mbox{on }M\times (0,T).
\ee
Then $h$ must satisfy the ODE
\[
\frac{\ud h^m}{\ud t}=h  \quad \mbox{on }(0,T).
\]
Integrating the above equation yields
\[
h=h_c:= (c+\frac{m-1}{m} t)^{\frac{1}{m-1}},  \quad  \quad c\ge 0,
\]
where $c>0$ if $m<1$.
Hence, we obtain the separable solutions
\be \label{eq:separable}
U_c(X,t)= (c+\frac{m-1}{m} t)^{\frac{1}{m-1}} S(X).
\ee
It is worth noting that if $m>1$, $U_0(0)\equiv 0$ and hence $U_0$ corresponds to the ``friendly giant" of porous medium equations in bounded domains. This solution originates from the loss of uniqueness of the ODE.

\section{Existence and regularity}

\label{s:ER}

By the standard estimates for Riesz potentials, we have
\be \label{eq:riesz-bound-1}
\|\mathcal{K}_{g_0}(f) \|_{L^{\frac{np}{n-2\sigma p }} (M)}\le C_1 \|f\|_{L^p(M)},  \quad \forall~ f\in L^p (M), 1<p<\frac{n}{2\sigma},
\ee
\be \label{eq:riesz-bound-2}
\|\mathcal{K}_{g_0}(f) \|_{C^{\alpha }(M)} \le C_1\|f\|_{L^p(M)}, \quad \forall~ f\in L^p (M), \frac{n}{2\sigma} <p\le \infty,
\ee
where $0<\alpha<\min\{2\sigma,1\} $ and  $C_1>0$ depends only on $M,g_0,p, \Lda $ and $\sigma$.

\begin{lem}\label{lem:existence} Suppose that $u_0\in C^0(M)$. Then \eqref{eq:IP} has a unique solution $u$ satisfying  $u^m\in C^1([0,T^*); C^0(M))$ for some $T^*>0$ representing the maximum existence time. If $T^*<\infty$, then
\[
\lim_{t\to T^*}\|u(t)\|_{C^0(M)}=\infty.
\]
\end{lem}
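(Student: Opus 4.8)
The plan is to set up \eqref{eq:IP} as an abstract ODE in the Banach space $C^0(M)$ and invoke the Picard–Lindel\"of theorem, then derive the blow-up alternative from the standard continuation argument. The first point is that the map $F(u):=\mathcal{K}_{g_0}(u)$ is locally Lipschitz from $C^0(M)$ to $C^{\alpha}(M)\hookrightarrow C^0(M)$: indeed taking $p=\infty$ in \eqref{eq:riesz-bound-2} gives $\|\mathcal{K}_{g_0}(f)\|_{C^{\alpha}(M)}\le C_1\|f\|_{C^0(M)}$, and since $\mathcal{K}_{g_0}$ is linear this is globally Lipschitz in $u$. The genuine nonlinearity sits in the time derivative $\partial_t u^m$, so the natural move is to change the unknown to $w=u^m$, i.e. $u=w^{1/m}$ (defined via $s\mapsto |s|^{1/m-1}s$ or simply on the nonnegative cone since $u_0\ge 0$), and write the equation as $\partial_t w=\mathcal{K}_{g_0}(w^{1/m})=:G(w)$. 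Then $G\colon C^0(M)\to C^0(M)$ is continuous, and locally Lipschitz away from the set where $w$ vanishes; one first runs the argument for $u_0$ bounded below by a positive constant so that the relevant orbit stays in a region where $s\mapsto s^{1/m}$ is smooth, obtaining a local-in-time mild solution $w\in C^1([0,\tau];C^0(M))$ by the contraction mapping principle on $C^0([0,\tau];C^0(M))$, hence $u^m=w\in C^1([0,\tau];C^0(M))$.

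Next I would remove the positivity-of-initial-datum restriction. Since $u_0\ge 0$ is merely continuous and not identically zero, approximate by $u_0+\epsilon$, solve, and pass to the limit using the comparison/monotonicity structure: because $K_0>0$, $\mathcal{K}_{g_0}$ is order-preserving, so the solutions are monotone in $\epsilon$ and in $u_0$, and one gets uniform local bounds on any interval where the $\epsilon$-solutions stay bounded; alternatively, one notes that $\mathcal{K}_{g_0}(u_0)>0$ strictly on $M$ (as in the proof of Proposition~\ref{prop:subcritical}), so $\partial_t u^m\ge c_0>0$ initially and $u$ instantaneously becomes strictly positive, after which the smooth regime applies. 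Uniqueness follows from the local Lipschitz estimate together with a Gr\"onwall argument on $\|w_1-w_2\|_{C^0(M)}$ on any interval where both solutions are bounded above (the lower bound being automatic from $\partial_t w>0$).

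The maximal existence time $T^*$ is then defined as the supremum of $\tau$ for which a solution exists on $[0,\tau]$, and the standard ODE continuation principle gives the dichotomy: either $T^*=\infty$ or $\limsup_{t\to T^*}\|u(t)\|_{C^0(M)}=\infty$. To upgrade $\limsup$ to $\lim$, I would use the one-sided monotonicity $\partial_t w=\partial_t u^m=\mathcal{K}_{g_0}(u)\ge 0$ (since $u\ge 0$ and $K_0>0$), so $w(\cdot,t)=u(\cdot,t)^m$ is nondecreasing in $t$ pointwise; hence $\|u(\cdot,t)\|_{C^0(M)}=\max_M w(\cdot,t)^{1/m}$ is nondecreasing in $t$, and a nondecreasing function with $\limsup=\infty$ has $\lim=\infty$.

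\textbf{Main obstacle.} The delicate point is the lack of Lipschitz continuity of $s\mapsto s^{1/m}$ at $s=0$ when $m>1$ (and the lack of smoothness when $m<1$), which obstructs a direct fixed-point argument for general nonnegative $u_0$. The way around it — and the step I expect to require the most care — is the instantaneous-positivity argument: one must show that for $t>0$ small the solution satisfies $u(\cdot,t)\ge \eta(t)>0$ on all of $M$, using that $\mathcal{K}_{g_0}(u_0)$ is bounded below by a positive constant on the compact manifold $M$ because the kernel $K_0$ is strictly positive and $u_0\not\equiv 0$; this reduces everything to the nondegenerate regime where the contraction mapping and Gr\"onwall arguments run cleanly. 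The Riesz-potential mapping properties \eqref{eq:riesz-bound-1}–\eqref{eq:riesz-bound-2} are exactly what guarantees $\mathcal{K}_{g_0}$ maps $C^0$ into $C^0$ continuously so the abstract ODE theory applies.
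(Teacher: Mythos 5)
Your proposal follows essentially the same route as the paper: pass to the integrated (mild) form $u^m(X,t)=u_0(X)^m+\int_0^t\mathcal{K}_{g_0}(u)(X,s)\,\ud s$, invoke Picard--Lindel\"of using \eqref{eq:riesz-bound-2}, and use the monotonicity $\pa_t u^m=\mathcal{K}_{g_0}(u)\ge 0$ to turn the usual continuation dichotomy into a genuine limit. The paper's proof is deliberately terse (it says no more than ``existence follows from Picard--Lindel\"of, using \eqref{eq:riesz-bound-2}'' and then notes $\pa_tu^m|_{t=0}=\mathcal{K}_{g_0}(u_0)>0$), so your write-up adds value by spelling out the change of unknown $w=u^m$ and, more importantly, by flagging that for $m>1$ the map $s\mapsto s^{1/m}$ is not Lipschitz at $s=0$, so a naive contraction on $C^0([0,\tau];C^0(M))$ does not run if $u_0$ vanishes somewhere. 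This is a real subtlety that the paper glosses over, and your proposed fixes (approximate by $u_0+\varepsilon$ and pass to the limit via monotonicity, or exploit that $\mathcal{K}_{g_0}(u_0)$ is bounded below by a positive constant on the compact $M$, so the forcing term $\int_0^t\mathcal{K}_{g_0}(u)\,\ud s\ge c\,t$ keeps $w$ away from $0$ for $t>0$ and gives $|\Phi(u_1)-\Phi(u_2)|\lesssim t^{1/m}\|u_1-u_2\|_{C^0}$, a contraction for small $\tau$) are the standard and correct ways to close it. Two small corrections: the parenthetical ``lack of smoothness when $m<1$'' is spurious --- for $m<1$ one has $1/m>1$, so $s\mapsto s^{1/m}$ is $C^1$ on $[0,\infty)$ with derivative vanishing at $0$, hence locally Lipschitz, and there is no problem at $s=0$; and for the uniqueness/Gr\"onwall step it is cleaner to run the estimate on $\|u_1-u_2\|_{C^0}$ using the weighted bound above (which already incorporates the lower bound on $w$) rather than on $\|w_1-w_2\|_{C^0}$, since the lower bound on $w$ that makes $w\mapsto w^{1/m}$ Lipschitz only becomes available after time $0$.
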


\begin{proof}  Integrating \eqref{eq:IP} in $t$, we obtain
\be \label{eq:picard}
u(X,t)^{m}= u_0(X)^{m}+ \int_0^t \mathcal{K}_{g_0}(u) (X,s)\,\ud s.
\ee
The existence follows from Picard-Lindel\"of theorem, using \eqref{eq:riesz-bound-2}. Note that
\[
\pa_tu(X,t)^{m} \Big|_{t=0}= \mathcal{K}_{g_0}( u_0)>0 \quad \mbox{on }\Sn,
\]
we conclude that  $u^{m}$ is positive for $t>0$ and increasing in $t$. If $\lim_{t\to T^*}\|u(t)\|_{C^0(M)}$ is finite, we can extend the solution further, which contradicts to the definition of $T^*$.  The lemma is proved.
\end{proof}

We may say that a nonnegative function  $u\in C((0,T); L^p(M))$, $p\ge \max\{m,1\}$,  is  a  mild solution of \eqref{eq:IP} if the  integral identity  \eqref{eq:picard} holds for almost every $X\in M$ and $t\in [0,T)$.

Let us write the equation in local coordinates. For an arbitrary point $\bar X\in M$,  choose a geodesic normal coordinates system $\{x^1,\dots, x^n \}$ centered at $\bar X$, and write the integral equation \eqref{eq:picard} as
\be \label{eq:local-coord}
\hat u(x,t)^{m}= \hat u(x,0)^{m}+ \int_0^t \int_{{B}_{\delta }} \hat K_0( x, y) \hat u( y,s)\,\ud y\ud s+h(x,t),
\ee
where $\delta>0$ is a constant,
\[
\hat u(x,t)= u(\exp_{\bar X} x,t), \quad \hat K_0(x,y)= K_0(\exp_{\bar X} x,\exp_{\bar X} y) \sqrt{\det g_0(y)}
\] and  \[
h(x,t)= \int_0^t \int_{M\setminus \mathcal{B}_{\delta}(\bar X)} K_0(\exp_{\bar X} x ,\zeta) u(\zeta,s)\,\ud vol_{g_0} \ud s.
 \]
In the next lemma, we show that $h$ is a good term.

\begin{lem} \label{lem:tail} Assume as above. Then the nonnegative function $h$ satisfies that, for every $0<t<T^* $,
\be\label{eq:h-harnack}
\sup_{B_{\delta/2}} h(\cdot, t)\le  C \inf _{B_{\delta/2}} h(\cdot, t),
\ee
\be\label{eq:h-infty}
\sup_{B_{\delta/2}} h(\cdot, t)\le  C \dashint_{\mathcal{B}_{\delta/2}(\bar X))} u^m \,\ud vol_{g_0}
\ee
and
\be\label{eq:h-grad}
|\nabla h (x,t)|\le \frac{ C}{\delta} h(x,t), \quad \forall~ x\in B_{\delta/2},
\ee
where $\dashint_{\mathcal{B}_{\delta/2}(\bar X))}= \frac{1}{vol_{g_0} (\mathcal{B}_{\delta/2}(\bar X)))} \dashint_{\mathcal{B}_{\delta/2}(\bar X))}$, and $C>0$ depends only on $n, \sigma, \lda $ and $\|g_0\|_{C(M)}$.
\end{lem}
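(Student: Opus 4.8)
The plan is to reduce all three inequalities to a single elementary geometric remark about the domain of the $\zeta$-integral defining $h$. If $x\in B_{\delta/2}$ and $\zeta\in M\setminus\mathcal{B}_\delta(\bar X)$, then the triangle inequality in $(M,g_0)$ gives
\[
\tfrac{\delta}{2}\le d_{g_0}(\exp_{\bar X}x,\zeta)\le \mathrm{diam}(M,g_0),
\]
and for a second point $x'\in B_{\delta/2}$ the two distances $d_{g_0}(\exp_{\bar X}x,\zeta)$ and $d_{g_0}(\exp_{\bar X}x',\zeta)$ differ by at most $\delta$, hence are comparable with ratio at most $3$. Inserting this into the two-sided bound \eqref{eq:K-2} and into the gradient bound \eqref{eq:K-3} makes every integrand in $h$ controlled uniformly in $\zeta$ and $s$; since $\delta$ is a fixed structural constant and $\mathrm{diam}(M,g_0)$ is determined by $g_0$, the constants produced this way depend only on $n,\sigma,\Lda$ and $g_0$.

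For \eqref{eq:h-harnack} I would note that, for $x,x'\in B_{\delta/2}$ and $\zeta\notin\mathcal{B}_\delta(\bar X)$, the distance comparison together with $2\sigma-n<0$ and \eqref{eq:K-2} yields $K_0(\exp_{\bar X}x,\zeta)\le \Lda^2 3^{\,n-2\sigma}K_0(\exp_{\bar X}x',\zeta)$. Multiplying by $u(\zeta,s)\ge0$ and integrating in $\zeta\in M\setminus\mathcal{B}_\delta(\bar X)$ and $s\in(0,t)$ gives $h(x,t)\le \Lda^2 3^{\,n-2\sigma}h(x',t)$, and taking the supremum in $x$ and the infimum in $x'$ finishes this part.

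For \eqref{eq:h-infty} the point is to combine an upper bound for $h$ with a lower bound for $u^m$ coming from the Picard identity \eqref{eq:picard}. On one side, the upper bound in \eqref{eq:K-2} and $d_{g_0}(\exp_{\bar X}x,\zeta)\ge\delta/2$ give $h(x,t)\le \Lda(\delta/2)^{2\sigma-n}I(t)$ with $I(t):=\int_0^t\!\!\int_{M\setminus\mathcal{B}_\delta(\bar X)}u(\zeta,s)\,\ud vol_{g_0}(\zeta)\,\ud s$. On the other side, integrating \eqref{eq:picard} in $t$, discarding the nonnegative term $u_0^m$ and the contribution of $\mathcal{B}_\delta(\bar X)$ to $\mathcal{K}_{g_0}(u)$, and using the lower bound in \eqref{eq:K-2} with $d_{g_0}(X,\zeta)\le\mathrm{diam}(M,g_0)$, one gets $u(X,t)^m\ge \Lda^{-1}\mathrm{diam}(M,g_0)^{2\sigma-n}I(t)$ for every $X\in\mathcal{B}_{\delta/2}(\bar X)$. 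Eliminating $I(t)$ gives $h(x,t)\le C\,u(X,t)^m$ for all such $X$, and averaging in $X$ over $\mathcal{B}_{\delta/2}(\bar X)$ yields \eqref{eq:h-infty}. Finally, for \eqref{eq:h-grad} I would differentiate $h$ under the integral sign --- legitimate because on $B_{\delta/2}\times(M\setminus\mathcal{B}_\delta(\bar X))$ the kernel and, by \eqref{eq:K-3}, its gradient are bounded, while $u(\cdot,s)$ stays bounded in $L^1(M)$ locally uniformly in $s\in[0,T^*)$ --- and then combine \eqref{eq:K-3} (together with the boundedness of $d\exp_{\bar X}$ on $B_{\delta/2}$, which is where $g_0$ enters) with the lower bound in \eqref{eq:K-2} and $d_{g_0}(\exp_{\bar X}x,\zeta)\ge\delta/2$ to get the pointwise estimate $|\nabla_x K_0(\exp_{\bar X}x,\zeta)|\le (C/\delta)K_0(\exp_{\bar X}x,\zeta)$; multiplying by $u(\zeta,s)\ge0$ and integrating gives $|\nabla h(x,t)|\le (C/\delta)h(x,t)$.

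I do not expect a genuine obstacle here: everything is driven by the distance bounds above. The only spots that need a little care are the justification of differentiation under the integral sign in \eqref{eq:h-grad}, the passage from the Euclidean gradient $\nabla_x$ in normal coordinates to the metric gradient $\nabla_{g_0}$ (absorbed into the dependence of $C$ on $g_0$), and the bookkeeping confirming that the implicit dependence of the constants on the fixed radius $\delta$ and on $\mathrm{diam}(M,g_0)$ is harmless.
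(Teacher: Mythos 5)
Your proofs of \eqref{eq:h-harnack} and \eqref{eq:h-grad} follow essentially the same path as the paper: compare kernel values via the two-sided bound \eqref{eq:K-2} using the elementary distance comparison on $B_{\delta/2}\times(M\setminus\mathcal{B}_\delta(\bar X))$, and combine \eqref{eq:K-3} with $d\ge\delta/2$ and \eqref{eq:K-2} to absorb the extra power of distance into a factor $C/\delta$ times the kernel. Those parts are correct and match the paper.

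For \eqref{eq:h-infty}, however, you take a genuinely different and more circuitous route than the paper. You bound $h$ from above and $u^m$ from below by the same intermediate quantity $I(t)=\int_0^t\int_{M\setminus\mathcal{B}_\delta(\bar X)}u\,\ud vol_{g_0}\,\ud s$, eliminate $I(t)$, and average. This is correct, but it produces a constant of the form $\Lda^2(2\,\mathrm{diam}(M,g_0)/\delta)^{n-2\sigma}$, which blows up as $\delta\to0$, whereas the lemma asserts a constant depending only on $n,\sigma,\Lda,\|g_0\|_{C(M)}$. The paper's argument is both shorter and sharper: since the right-hand side of \eqref{eq:local-coord} is a sum of three nonnegative terms of which $h$ is one, $h(x,t)\le\hat u(x,t)^m$ pointwise; then
\[
\sup_{B_{\delta/2}}h(\cdot,t)\le C\inf_{B_{\delta/2}}h(\cdot,t)\le C\dashint_{\mathcal{B}_{\delta/2}(\bar X)}h\,\ud vol_{g_0}\le C\dashint_{\mathcal{B}_{\delta/2}(\bar X)}u^m\,\ud vol_{g_0},
\]
where $C$ is just the Harnack constant from \eqref{eq:h-harnack}, independent of $\delta$. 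In short: you prove the inequality, but by a detour that sacrifices the $\delta$-uniformity of the constant; the paper's one-line observation $h\le u^m$ plus the already-proved Harnack inequality is the cleaner mechanism, and it is why the stated dependence of $C$ is correct.
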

\begin{proof}
Note that for any $x_1,x_2\in B_{\delta/2}$, using \eqref{eq:K-2},
\begin{align*}
h(x_1,t)&=  \int_0^t \int_{M\setminus \mathcal{B}_{\delta}(\bar X)} \frac{K_0(\exp_{\bar X} x_1 ,\zeta)}{K_0(\exp_{\bar X} x_2 ,\zeta)} K_0(\exp_{\bar X} x_2 ,\zeta) u(\zeta,s)\,\ud vol_{g_0} \ud s\\&
\le \Lda^2 \int_0^t \int_{M\setminus \mathcal{B}_{\delta}(\bar X)} \Big(\frac{d_{g_0}(x_1,\zeta)}{d_{g_0}(x_2,\zeta)}\Big)^{2\sigma-n} K_0(\exp_{\bar X} x_2 ,\zeta) u(\zeta,s)\,\ud vol_{g_0} \ud s\\&
\le C h(x_2, t).
\end{align*}
Hence, \eqref{eq:h-harnack} is verified.  Since $h(x,t)\le \hat u(x,t)^m$, \eqref{eq:h-infty} follows immediately from \eqref{eq:h-harnack}.

Using \eqref{eq:K-3} and \eqref{eq:K-2}, we have
\begin{align*}
|\nabla h(x,t)|\le  \frac{\Lda ^2 C}{\delta} \int_0^t \int_{M\setminus \mathcal{B}_{\delta}(\bar X)} K_0(\exp_{\bar X} x,\zeta) u(\zeta,s)\,\ud vol_{g_0} \ud s.
\end{align*}
Hence, \eqref{eq:h-grad} is verified. The lemma is proved.
\end{proof}

We shall establish  regularity results for mild solutions, which are inspired by Theorem 1.3 of Li \cite{Li04}.
For $T>0$, suppose that  $V\in L^\infty([0,T]; L^\frac{n}{2\sigma}(B_3))$ and $h\in L^\infty([0,T];L^q(B_2))$, $q> \frac{n}{n-2\sigma}$, are nonnegative functions.
We study the integrability improvement for nonnegative solutions of the integral inequality
\be\label{eq:local-1}
w(x, t)\le   \int_0^t e^{s-t} \int_{B_3}\frac{V(y,s) w(y,s)}{|x-y|^{n-2\sigma}}\,\ud y\ud s +h(x,t), \quad a.e.~ x\in B_2, ~0\le t\le T.
\ee
Suppose  $ w\in L^\infty((0,T); L^p(B_3))$ for some $\frac{n}{n-2\sigma}<p<q$.

The  factor $e^{s-t}$ will serve to establish estimates independent of $T$. In subsequent applications, it will be substituted with $e^{\alpha (s-t)}$ for a positive constant $\alpha$. For brevity, we set $\alpha=1$ in this context.

\begin{thm} \label{thm:improve-int}
 Assume as above. Suppose additionally that $V\in C([0,T]; L^\frac{n}{2\sigma}(B_3))$.  Then $ w\in L^\infty((0,T); L^q(B_1))$.
\end{thm}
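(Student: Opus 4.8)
The plan is to run a bootstrap (Moser-type iteration in integrability exponents) based on the Hardy–Littlewood–Sobolev / Riesz potential mapping properties, using the smallness of $V$ in $L^{n/2\sigma}$ only on small sub-balls, which is where the continuity hypothesis $V\in C([0,T];L^{n/2\sigma}(B_3))$ enters. The key point is that the Riesz operator $f\mapsto \int_{B_3}|x-y|^{2\sigma-n}f(y)\,\ud y$ maps $L^r\to L^{r^*}$ with $\frac{1}{r^*}=\frac1r-\frac{2\sigma}{n}$, so formally one gains $2\sigma/n$ in the reciprocal exponent at each step; the multiplication by $V$ costs $2\sigma/n$ via Hölder (since $V\in L^{n/2\sigma}$), so without smallness there is no net gain. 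The device to create a gain is to split $V=V_1+V_2$ where $\|V_1\|_{L^\infty([0,T];L^{n/2\sigma})}$ is small (bounded by any prescribed $\va$, using absolute continuity of the integral together with continuity in $t$ to get a uniform-in-$t$ small tail/truncation) and $V_2\in L^\infty([0,T];L^\infty)$. Then the $V_2$ part genuinely gains $2\sigma/n$ in the exponent (modulo localization), while the $V_1$ part can be absorbed when its norm is small relative to the operator norm of the Riesz potential on the relevant $L^r$ space.

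Concretely, first I would fix a large exponent target and a finite increasing sequence $p=p_0<p_1<\dots<p_N$ with $q\le p_N$ and $\frac{1}{p_{k+1}}\ge \frac{1}{p_k}-\frac{2\sigma}{n}$ chosen so that each step is a genuine (not borderline) gain, say $\frac{1}{p_{k+1}}=\frac{1}{p_k}-\frac{\sigma}{n}$. At step $k$, assume $w\in L^\infty((0,T);L^{p_k}(B_{r_k}))$ on a slightly larger ball. Multiply \eqref{eq:local-1} by a cutoff, use \eqref{eq:K-2}-type bounds to keep everything in terms of the Euclidean Riesz kernel on $B_3$, split $V$ as above with $\va$ chosen small depending on the operator norm of $g\mapsto \int_{B_3}|x-y|^{2\sigma-n}g$ from $L^{p_k'}$ to $L^{p_{k+1}}$ (where $\frac{1}{p_k'}=\frac{1}{p_k}+\frac{2\sigma}{n}$, wait—rather from $L^{s}$ with $\frac1s=\frac{1}{p_{k+1}}+\frac{2\sigma}{n}$), apply HLS to the $V_1 w$ term and absorb it into the left side (this is legitimate because the left side is already known finite in $L^{p_k}$, hence in $L^{s}$ locally after we also know the bound is finite — a standard a priori/approximation remark, or one runs the absorption on truncations $\min(w,L)$ and lets $L\to\infty$), bound the $V_2 w$ term by $\|V_2\|_\infty$ times the Riesz potential of $w\in L^{p_k}$, landing in $L^{p_{k+1}}$, and control $h\in L^q\subset L^{p_{k+1}}$ as long as $p_{k+1}\le q$ (so I should only iterate up to $q$, which is exactly the claim; if I want to be safe I cap $p_N=q$). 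The $e^{s-t}$ factor makes all time integrals uniformly bounded in $T$, as the paper notes. After $N$ steps we reach $L^\infty((0,T);L^q(B_1))$.

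The main obstacle — and the step I would be most careful about — is the absorption argument for the small-$V_1$ term: one needs to know a priori that the quantity being absorbed is finite before subtracting it, and one needs the splitting of $V$ to be uniform in $t\in[0,T]$, which is precisely why mere $V\in L^\infty_tL^{n/2\sigma}_x$ is not enough and the hypothesis $V\in C([0,T];L^{n/2\sigma})$ is imposed (compactness of the continuous image in $L^{n/2\sigma}$ gives an equi-integrable family, hence a uniform small-norm splitting). I would handle the a priori finiteness either by first truncating $w$ to $w_L=\min(w,L)$ — noting $w_L$ satisfies the same inequality with $w$ on the right still (or with an easy modification) and $w_L\in L^\infty$ trivially, so absorption is rigorous — deriving the bound for $w_L$ uniform in $L$, and then sending $L\to\infty$ by monotone convergence. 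A secondary, purely bookkeeping nuisance is tracking the shrinking balls $B_{r_k}$ and the cutoff-induced error terms (which are lower-order, being Riesz potentials of $L^{p_k}$ functions times bounded kernels, hence already in the target space), and verifying that \eqref{eq:K-2} lets one replace the geodesic kernel by the Euclidean one on $B_3$ at the cost of harmless constants. None of these are serious; the whole argument is a finite iteration, uniform in $T$ thanks to the exponential factor.
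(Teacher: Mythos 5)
Your overall strategy is sound but takes a genuinely different route. The paper proceeds in two steps: Proposition~\ref{prop:small_to_regular} first proves the jump from $L^p$ to $L^q$ in a \emph{single} stride under the global smallness hypothesis $\|V\|_{L^\infty((0,T);L^{n/2\sigma}(B_3))}\le\bar\delta$, by decomposing the Riesz potential into near and far parts $I_{1,r},I_{2,r}$ and invoking the Giaquinta--Giusti iteration lemma on shrinking balls to absorb the factor $1/2$ from the near part. Then, to remove the smallness hypothesis, the paper exploits the exact scale invariance of inequality~\eqref{eq:local-1} under $w\mapsto\va^{(n-2\sigma)/2}w(x_0+\va\cdot)$, $V\mapsto\va^{2\sigma}V(x_0+\va\cdot)$: one has $\|V_\va(\cdot,t)\|_{L^{n/2\sigma}(B_3)}=\|V(\cdot,t)\|_{L^{n/2\sigma}(B_{3\va}(x_0))}$, and the continuity hypothesis (hence compactness of the image of $[0,T]$ in $L^{n/2\sigma}$, hence uniform integrability) makes this small for small $\va$ uniformly in $t$. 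You instead split $V=V_1+V_2$ with $\|V_1\|_{L^{n/2\sigma}}$ small uniformly in $t$ and $V_2$ bounded --- the same uniform-integrability input, deployed as a level-set cutoff rather than a spatial rescaling --- and iterate in exponents by a fixed gain $\sigma/n$ per step. Both work; the paper's rescaling is slicker, since the exact scale invariance of $|x-y|^{2\sigma-n}$ does the work of your splitting and a single Giaquinta--Giusti step replaces a finite exponent iteration (which you would also need to localize, as the small-$V_1$ absorption at each stage crosses from a ball to a strictly smaller one). Your version is marginally more elementary and would be slightly more robust if the kernel were not exactly homogeneous.

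One concrete caveat: the a priori finiteness needed for the absorption is the weak point in both arguments (the paper defers to Li's kernel truncation). The specific fix you propose --- truncating the unknown to $w_L=\min(w,L)$ --- does not close as written. The inequality that $w_L$ inherits still has the full $w$ on the right, and the extra term picked up after replacing $w$ by $w_L$ inside, namely $\int_0^t e^{s-t}\int_{B_3}|x-y|^{2\sigma-n}V_1(y,s)(w-w_L)(y,s)\,\ud y\ud s$, cannot be placed in $L^{p_{k+1}}$ from the available information: by HLS and H\"older, controlling it in $L^{p_{k+1}}$ given $(w-w_L)\in L^{p_k}$ would require $V_1\in L^{n/\sigma}$ rather than $L^{n/2\sigma}$. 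The correct device is the one the paper cites from Li~\cite{Li04}: truncate the \emph{kernel} (so the operator is bounded from $L^p$ into $L^\infty$ and the unknown is trivially in every $L^q_{loc}$, allowing a uniform bound to be passed to the limit), or equivalently run a Neumann-series iteration of the inequality itself.
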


First, we prove the following proposition.

\begin{prop}\label{prop:small_to_regular} For $q>p>\frac{n}{n-2\sigma}$, there exist positive constants $\bar \delta<1$ and $C\ge 1$, depending only on $n,\sigma$, $p$ and $q$, such that if
\be \label{eq:smallness-1}
\|V\|_{L^\infty((0,T); L^{\frac{n}{2\sigma}}(B_3))} \le \bar \delta,
\ee then $ w\in L^\infty((0,T); L^q(B_1))$ and
\[
\|w\|_{L^\infty((0,T); L^q(B_1))}\le C\big(\|w\|_{L^\infty((0,T); L^p(B_3))} +\|h\|_{L^\infty((0,T); L^q(B_2))}\big).
\]
\end{prop}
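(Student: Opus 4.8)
The plan is to prove the proposition by a bootstrap (iteration) argument built on the Hardy--Littlewood--Sobolev inequality, with the smallness of $\|V\|_{L^\infty_t L^{n/2\sigma}_x}$ making the HLS constant absorb the worst term. First I would fix a chain of nested balls $B_1 \subset B_{r_1} \subset \cdots \subset B_{r_N} \subset B_2$, or more conveniently just work with the three scales $B_1, B_2, B_3$ together with a standard localization: multiply \eqref{eq:local-1} by a cutoff and split the convolution integral into a ``near'' part (integration over $B_2$, say) and a ``far'' part ($B_3 \setminus B_2$), where on the far part $|x-y|^{2\sigma-n}$ is bounded, so that the far part is controlled by $\|V\|_{L^{n/2\sigma}}\|w\|_{L^p(B_3)}$ (using $p > n/(n-2\sigma)$, so that the Hölder conjugate triple $(\tfrac{n}{2\sigma}, p', \ldots)$ closes up) and hence, once we also add $h$, by the right-hand side of the claimed estimate.

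The heart is the near part. Writing $q_0 = p$ and defining an increasing sequence of exponents $q_{k+1}$ by the HLS gain — schematically, if $w(\cdot,t) \in L^{q_k}(B_2)$ uniformly in $t$, then $V w \in L^{s_k}$ with $\tfrac{1}{s_k} = \tfrac{2\sigma}{n} + \tfrac{1}{q_k}$ by Hölder, and HLS upgrades the Riesz potential of an $L^{s_k}$ function to $L^{q_{k+1}}$ with $\tfrac{1}{q_{k+1}} = \tfrac{1}{s_k} - \tfrac{2\sigma}{n} = \tfrac{1}{q_k}$ — I note the naive iteration does \emph{not} gain, which is exactly why the smallness hypothesis is essential. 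The correct move: for each fixed target exponent $r \in (p, q]$, one does \emph{not} iterate the exponent but instead estimates $\|w(\cdot,t)\|_{L^r}$ directly. Apply HLS with exponents chosen so that the Riesz potential of $Vw$ maps into $L^r$; by Hölder, $\|Vw\|_{L^{(1/r + 2\sigma/n)^{-1}}} \le \|V\|_{L^{n/2\sigma}} \|w\|_{L^r}$, and HLS then gives
\[
\Big\| \int_{B_3} \frac{V(y,s)w(y,s)}{|x-y|^{n-2\sigma}}\,\mathrm{d}y \Big\|_{L^r(B_2)} \le C_{\mathrm{HLS}} \|V\|_{L^{n/2\sigma}} \|w(\cdot,s)\|_{L^r(B_3)}.
\]
Inserting this into \eqref{eq:local-1}, taking $L^r(B_2)$ norms, using $\int_0^t e^{s-t}\,\mathrm{d}s \le 1$, and passing to the supremum over $t$ — together with a Gronwall-type or direct absorption step — yields
\[
\|w\|_{L^\infty_t L^r(B_2)} \le C_{\mathrm{HLS}} \bar\delta \, \|w\|_{L^\infty_t L^r(B_3)} + \|h\|_{L^\infty_t L^r(B_2)}.
\]
This still has the same exponent on both sides, so one genuinely needs the localization with shrinking balls to break the circularity: choose a finite family $B_1 = \Omega_0 \subset \Omega_1 \subset \cdots \subset \Omega_M = B_3$ and corresponding cutoffs, so that the term on $\Omega_{j+1} \setminus \Omega_j$ has a bounded kernel and is estimated by $\|w\|_{L^p(B_3)}$, while only the term genuinely supported near $\Omega_j$ carries the factor $C_{\mathrm{HLS}}\bar\delta$; choosing $\bar\delta$ so that $C_{\mathrm{HLS}}\bar\delta < 1/2$ lets one absorb. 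Running this once upgrades from $L^p$ to $L^q$ directly, since the HLS/Hölder computation above is valid with $r = q$ as long as $q < \infty$ and the conjugate exponents stay admissible (this is where $q$ finite, $p > n/(n-2\sigma)$, and $V \in L^{n/2\sigma}$ exactly meet).

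The main obstacle, and the point requiring care, is precisely this self-improvement vs.\ circularity issue: the Riesz potential with an $L^{n/2\sigma}$ weight is scale-critical, so no exponent is gained for free and one cannot iterate $q_k \nearrow q$ — the argument must be structured as a single absorption step on fixed exponent $q$, made possible by (a) the smallness $\bar\delta$ to beat $C_{\mathrm{HLS}}$ and (b) the localization onto shrinking balls to convert the ``far'' contributions into lower-order $L^p$ data. A secondary technical point is justifying the manipulations for the mild (only $L^\infty_t L^p_x$) solution — in particular that the right-hand side of \eqref{eq:local-1} is finite a.e.\ and that Fubini/Minkowski's integral inequality apply — but this is routine given $w \in L^\infty_t L^p$, $V \in L^\infty_t L^{n/2\sigma}$, $h \in L^\infty_t L^q$ with $q > p > n/(n-2\sigma)$. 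The continuity hypothesis $V \in C([0,T]; L^{n/2\sigma})$ is not needed for this proposition (it is used later in Theorem \ref{thm:improve-int} to remove the smallness assumption by covering $[0,T]$ with short time intervals); I would remark on this but not use it here.
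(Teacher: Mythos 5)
Your proposal is essentially the paper's argument: split the Riesz potential at nested radii into a near part (controlled by H\"older--HLS and made harmless by the smallness of $\bar\delta$) and a far part (bounded kernel, estimated by $\|V\|_{L^{n/2\sigma}}\|w\|_{L^p}$), then absorb; and you correctly identify that the $L^{n/2\sigma}$-weighted Riesz potential is scale-critical, so the only leverage is smallness, not an exponent gain. One imprecision worth flagging: a genuinely \emph{finite} chain $\Omega_0\subset\cdots\subset\Omega_M$ does not by itself close the absorption, since after $M$ steps you are left with $(C\bar\delta)^M$ times an $L^q(B_3)$ norm that is not a priori finite. What the paper actually does is prove, for all $1\le\rho<r<3/2$,
\[
\|w\|_{L^\infty L^q(B_\rho)}\le \tfrac12\|w\|_{L^\infty L^q(B_r)}+\tfrac{C}{(r-\rho)^{n-2\sigma}}\|w\|_{L^\infty L^p(B_3)}+C\|h\|_{L^\infty L^q(B_2)},
\]
and then invoke the Giaquinta--Giusti iteration lemma, which performs the geometric summation over an infinite shrinking sequence of radii; this is the precise device that turns your absorption heuristic into a proof. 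You also describe the a priori finiteness of $\|w\|_{L^\infty_t L^q}$ as routine; the paper is more explicit that one first assumes $w\in L^\infty_t L^q(B_2)$ to justify the manipulations and then removes this assumption by truncating the kernel and approximating, following Li. Your remark that the continuity of $V$ is not needed for this proposition, only for the rescaling step in Theorem \ref{thm:improve-int}, is correct.
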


\begin{proof}
We may assume \textit{a priori} that $w\in L^\infty((0,T); L^q(B_2))$ and only prove the estimate. Otherwise, one can truncate the kernel and take an approximation  as  implemented by Li \cite{Li04}.

For any open set $\om \subset B_3$, we let
\[
D_{\om}(x,s)= \int_{\om}\frac{V(y,s) w(y,s)}{|x-y|^{n-2\sigma}}\,\ud y,  \quad x\in B_3,
\]
and let
\begin{align*}
I_{1,r}(x,t)&=\int_0^t e^{s-t} D_{B_r}(x,s)\ud s, \\
 I_{2,r}(x, t)&=\int_0^t e^{s-t} D_{B_3\setminus \bar B_r}(x,s)\,\ud s
\end{align*}
with $0<r <3/2$.
For any fixed $t$, by the Minkowski inequality and estimates of Riesz potential we have, for $0<\rho<r <3/2$,
\begin{align*}
\|I_{1,r}(\cdot,t)\|_{L^q(B_{\rho})} &\le \int_0^t e^{s-t} \left(\int_{B_\rho} D_{B_r}(x,s)^q\,\ud x\right)^{1/q}\,\ud s\\&
\le C \int_0^t e^{s-t} \|V(s)^\nu w(s)^\nu \|_{L^1(B_r)}^{1/\nu}\,\ud s \\&
\le C  \int_0^t e^{s-t} ( \|V(s)^\nu\| _{L^{\frac{q}{q-\nu}}(B_r)}  \| w(s)^\nu \|_{L^{\frac{q}{\nu}}(B_r)})^{1/\nu}\,\ud s\\&
= C  \int_0^t e^{s-t} \|V(s)\| _{L^{\frac{n}{2\sigma}}(B_r)}  \| w(s) \|_{L^{q}(B_r)}\,\ud s\\&
\le C \bar \delta   \| w \|_{L^\infty ((0,T); L^{q}(B_r))} \int_0^{T}e^{s-T}\,\ud s\\&
\le C \bar \delta   \| w \|_{L^\infty ((0,T); L^{q}(B_r))} \\&
\le \frac{1}{2}  \| w \|_{L^\infty ((0,T); L^{q}(B_r))} ,
\end{align*}
where $\frac{1}{q}=\frac{1}{\nu}- \frac{2\sigma}{n}$, if $C\bar \delta\le 1/2$.
Using, for $x\in B_{\rho}$,
\begin{align*}
|D_{B_3\setminus \bar B_r}(x,s) |&\le \frac{1}{(r-\rho)^{n-2\sigma}} \int_{B_3\setminus \bar B_r} V(y,s) w(y,s)\,\ud y \\&
\le   \frac{C}{(r-\rho)^{n-2\sigma}} \|V\|_{L^{\frac{n}{2\sigma}}(B_3)} \|w\|_{L^{p}(B_3)},
\end{align*}
we have
\begin{align*}
\|I_{2,r}(\cdot,t)\|_{L^q(B_{\rho})}  \le  \frac{C}{(r-\rho)^{n-2\sigma}} \|w\|_{L^{p}(B_3)}.
\end{align*}
Since $w\ge 0$,
\begin{align*}
\|w\|_{L^\infty((0,T); L^q(B_\rho))}\le& \frac{1}{2} \|w\|_{L^\infty((0,T); L^q(B_r))}\\&
 +   \frac{C}{(r-\rho)^{n-2\sigma}}  \|w\|_{L^{p}(B_3)}  +C\|h\|_{L^\infty((0,T); L^q(B_2))}.
\end{align*}
Using the Lemma 1.1 from Giaquinta-Giusti \cite{GG}, we are able to establish the desired estimate.
\end{proof}

\begin{proof}[Proof of Theorem \ref{thm:improve-int}]  Let  $x_0\in B_1$ and  $0<\va<1/4$ be  small, we let
\[
w_\va(x,t)= \va ^{\frac{n-2\sigma}{2}} w(x_0+ \va x, t), \quad  V_\va(x,t)= \va^{2\sigma} V(x_0+ \va x, t), \quad x\in B_3,
\]
and
\[
h_\va(x,t)= \va ^{\frac{n-2\sigma}{2}} \int_0^t e^{s-t}  \int_{B_3\setminus B_{3\va} (x_0)} \frac{V(y,s)w(y,s)}{ |x_0+\va x-y|^{n-2\sigma}}\,\ud y \ud s+ \va ^{\frac{n-2\sigma}{2}} h(x_0+ \va x, t).
\]
Since $V\in C([0,T]; L^\frac{n}{2\sigma}(B_3))$ and
\[
\| V_\va(\cdot,t)\|_{L^{\frac{n}{2\sigma}} (B_3)}= \| V(\cdot,t)\|_{L^{\frac{n}{2\sigma}} (B_{3\va}(x_0))},
\]
we can find a small $\va$ such that  $\| V_\va\|_{L^\infty((0,T); L^{\frac{n}{2\sigma}} (B_3))}\le \bar \delta$ as in the above proposition. The theorem follows from Proposition \ref{prop:small_to_regular}.
\end{proof}

\section{Bounds at large time}
\label{s:blt}

In this section, we establish sharp asymptotical behavior of solutions of \eqref{eq:IP} if $m>1$; and obtain sharp integral bounds if $\frac{n-2\sigma}{n+2\sigma}\le m<1$.

First, we need a comparison principle.

\begin{lem}[Comparison principle] \label{lem:cp-1} Suppose that $m>0$.  Suppose that $f_1,f_2\in C^1([0,T]; C^0(M))$ are nonnegative functions satisfying
\[
\pa_t f_1^m \ge \mathcal{K}_{g_0} (f_1), \quad \pa_t f_2^m \le \mathcal{K}_{g_0} (f_2) \quad \mbox{on }M\times (0,T]
\]
and
\[
f_1(0)\ge f_2(0) \quad \mbox{on }M, \quad \mbox{and }f_1(0)>f_2(0)\quad \mbox{somewhere}.
\]
Then $f_1>f_2$ on $M\times (0,T]$.
\end{lem}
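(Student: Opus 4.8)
The plan is to prove the comparison principle by a continuity/connectedness argument on the time interval, combined with the positivity of the kernel $K_0$.

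First I would set $T_0 = \sup\{t \in (0,T] : f_1 > f_2 \text{ on } M \times (0,t]\}$ and show $T_0 > 0$. To see this, note that at $t=0$ the hypotheses $f_1(0) \ge f_2(0)$ with strict inequality somewhere, together with the differential inequalities, give $\partial_t(f_1^m - f_2^m)|_{t=0} \ge \mathcal{K}_{g_0}(f_1(0) - f_2(0))$. Since $f_1(0) - f_2(0) \ge 0$ is not identically zero and $K_0 > 0$ everywhere, $\mathcal{K}_{g_0}(f_1(0)-f_2(0)) > 0$ on all of $M$; this forces $f_1^m > f_2^m$, hence $f_1 > f_2$, on $M \times (0,\delta)$ for some small $\delta > 0$ by continuity of the time derivative (using $f_i^m \in C^1([0,T];C^0(M))$). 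Thus $T_0 \ge \delta > 0$.

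Next I would show $T_0 = T$. Suppose $T_0 < T$. By continuity, $f_1 \ge f_2$ on $M \times [0,T_0]$, and there exists $X_0 \in M$ with $f_1(X_0,T_0) = f_2(X_0,T_0)$ (otherwise compactness of $M$ and continuity would let us push $T_0$ up). I would then integrate the differential inequalities in time on $[0,T_0]$: for every $X \in M$,
\[
f_1(X,T_0)^m - f_2(X,T_0)^m \ge f_1(X,0)^m - f_2(X,0)^m + \int_0^{T_0} \mathcal{K}_{g_0}(f_1(\cdot,s) - f_2(\cdot,s))(X)\,\ud s.
\]
Evaluating at $X = X_0$ gives $0 \ge f_1(X_0,0)^m - f_2(X_0,0)^m + \int_0^{T_0}\mathcal{K}_{g_0}(f_1 - f_2)(X_0,s)\,\ud s$. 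The first term is $\ge 0$, and for each $s \in (0,T_0]$ we have $f_1(\cdot,s) - f_2(\cdot,s) \ge 0$; moreover it is not identically zero for $s$ in a set of positive measure (in fact for all $s \in (0,T_0)$, since $f_1 > f_2$ there), so $\mathcal{K}_{g_0}(f_1 - f_2)(X_0,s) > 0$ by positivity of $K_0$. Hence the right-hand side is strictly positive, a contradiction. Therefore $T_0 = T$, and in fact the same argument at $t = T$ (or a limiting argument) gives $f_1 > f_2$ on $M \times (0,T]$.

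I expect the main obstacle to be the low regularity in $m$: when $0 < m < 1$ the map $r \mapsto r^m$ is not Lipschitz near $0$, so one must be careful that $f_1^m > f_2^m$ genuinely implies $f_1 > f_2$ (true since $r \mapsto r^m$ is strictly increasing on $[0,\infty)$), and that the positivity propagates even though $f_1, f_2$ may vanish at $t=0$. The integrated form of the equation sidesteps any need to differentiate $f_i^m$ pointwise and handles this cleanly; the only real input is the strict positivity of $\mathcal{K}_{g_0}$ applied to a nonnegative, not-identically-zero function, which is immediate from \eqref{eq:K-2}. One should also record that once $f_1 > f_2$ on $(0,T_0]$, continuity gives $f_1 \ge f_2$ at the endpoint, so the contradiction argument is legitimate.
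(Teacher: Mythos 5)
Your proof is correct and fills in the argument that the paper deliberately omits (the paper records only ``This proof is quite elementary. We omit it.''). The two ingredients you isolate---strict positivity of $\mathcal{K}_{g_0}$ on nonnegative, not-identically-zero functions via \eqref{eq:K-2}, and a continuation argument on the time interval read through the integrated form of the inequalities---are exactly the right ones, and the contradiction at the touching point $(X_0,T_0)$ is clean.

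One small technical point worth tightening: the differential inequalities in the hypothesis are asserted on $M\times(0,T]$, not at $t=0$, so the line $\partial_t(f_1^m-f_2^m)\big|_{t=0}\ge\mathcal{K}_{g_0}(f_1(0)-f_2(0))$ should be obtained by letting $t\to 0^+$, using that $\partial_t f_i^m$ is continuous down to $t=0$ and that $\mathcal{K}_{g_0}(f_i)(\cdot,t)\to\mathcal{K}_{g_0}(f_i)(\cdot,0)$ in $C^0(M)$ by \eqref{eq:riesz-bound-2}. Relatedly, you invoke $f_i^m\in C^1([0,T];C^0(M))$, whereas the statement only writes $f_i\in C^1$; for $m\ge 1$ these coincide, and for $m<1$ the hypothesis $\partial_t f_i^m\ge$ (resp.\ $\le$) $\mathcal{K}_{g_0}(f_i)$ already presupposes the needed differentiability of $f_i^m$. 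With that reading, your continuity-of-derivative step is justified, and the proof is complete.
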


\begin{proof} This proof is quite elementary. We omit it.

\end{proof}

\begin{prop}\label{prop:mge1} Assume as in Lemma \ref{lem:existence}. If $m>1$, then $T^*=\infty$ and
\[
\|\frac{u}{U_0}-1\|_{C(M)}\le \frac{C}{t} \quad \mbox{for }t>1.
\]
\end{prop}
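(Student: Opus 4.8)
The plan is to establish a two-sided barrier for $u$ between suitable separable solutions $U_{c_1}$ and $U_{c_2}$ and then sharpen the barriers into an $O(1/t)$ estimate. First I would prove $T^*=\infty$: by Lemma \ref{lem:existence} it suffices to rule out $\|u(t)\|_{C^0(M)}\to\infty$ in finite time. Using the integrated equation \eqref{eq:picard} and the Riesz bound \eqref{eq:riesz-bound-2} applied to $\mathcal{K}_{g_0}(u)$, together with $u^m(t)=u_0^m+\int_0^t \mathcal{K}_{g_0}(u)\,ds$ being monotone increasing in $t$, one gets $\|u(t)^m\|_{C^0}\le \|u_0^m\|_{C^0}+C\int_0^t \|u(s)\|_{C^0}\,ds$. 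Since $m>1$, $\|u\|_{C^0}=\|u^m\|_{C^0}^{1/m}$ grows sublinearly relative to the right-hand side, so a Gronwall-type argument gives at most polynomial growth, hence $T^*=\infty$.

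Next I would set up the comparison. The separable solutions are $U_c(X,t)=(c+\tfrac{m-1}{m}t)^{1/(m-1)}S(X)$ from \eqref{eq:separable}, where $S$ is the unique positive steady solution (Proposition \ref{prop:unique}); they are genuine solutions of \eqref{eq:pme}. Since $u_0$ is continuous, not identically zero, and (for $t>0$) $u$ is strictly positive with $u^m$ increasing, for small $t_0>0$ one has $0<u(t_0)$ bounded between $\varepsilon S$ and $\Lambda_0 S$ on $M$ for some constants $0<\varepsilon<\Lambda_0$. Choosing $c_1,c_2>0$ so that $U_{c_1}(t_0)\le u(t_0)\le U_{c_2}(t_0)$ with strict inequality somewhere (or using a small time-shift of the separable solutions), Lemma \ref{lem:cp-1} yields
\[
U_{c_1}(X,t)\le u(X,t)\le U_{c_2}(X,t)\quad\text{for }t\ge t_0,\ X\in M.
\]
Dividing by $U_0(X,t)=(\tfrac{m-1}{m}t)^{1/(m-1)}S(X)$ gives, for $t>1$,
\[
\Big(\tfrac{c_1 m}{(m-1)t}+1\Big)^{\frac{1}{m-1}}\le \frac{u(X,t)}{U_0(X,t)}\le \Big(\tfrac{c_2 m}{(m-1)t}+1\Big)^{\frac{1}{m-1}},
\]
and since $(1+a/t)^{1/(m-1)}=1+O(1/t)$ for bounded $a$, this already shows $\big|\tfrac{u}{U_0}-1\big|\le C/t$ — which is exactly the claimed estimate, \emph{provided} the outer constants $c_1,c_2$ can be taken $O(1)$, i.e. uniformly in the position $X$. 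They can, because the initial comparison at time $t_0$ only needs $u(t_0)/S$ to be pinched between two positive constants, which follows from continuity and positivity of $u(t_0)$ and $S$ on the compact $M$.

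I expect the main obstacle to be the interface between the finite-time-existence step and the comparison step: one must first secure a time $t_0>0$ at which $u(t_0)$ is \emph{strictly positive} and comparable to $S$, since a priori $u_0$ may vanish somewhere. The monotonicity $\partial_t u^m|_{t=0}=\mathcal{K}_{g_0}(u_0)>0$ (positive kernel, $u_0\not\equiv0$) and the Riesz estimate \eqref{eq:riesz-bound-2} together show $u^m(t_0)\ge c\,t_0>0$ uniformly on $M$ for every fixed small $t_0$, giving the lower pinch; the upper pinch is the (already established) finite-time bound. A secondary point is making the strict-inequality hypothesis of Lemma \ref{lem:cp-1} genuinely hold — handled by shrinking $c_1$ slightly and enlarging $c_2$ slightly, or by comparing with the shifted separable solutions $U_{c}(X,t+\tau)$. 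Once the two-sided bound is in place, the decay rate is immediate from the elementary expansion of $(1+a/t)^{1/(m-1)}$, so no further PDE input is needed.
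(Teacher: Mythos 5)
Your proposal is correct and rests on the same core mechanism the paper uses — sandwiching $u$ between separable solutions $U_{c_1}, U_{c_2}$ and computing $U_c/U_0 - 1 = O(1/t)$ — but your execution is considerably longer than it needs to be. The paper compares from $t=0$ directly, taking $U_0$ itself as the lower barrier (using that $U_0(\cdot, 0) \equiv 0 \le u_0$ with strict inequality somewhere since $u_0 \not\equiv 0$) and $U_c$ with $c$ large as the upper barrier (so that $U_c(\cdot, 0) > u_0$ everywhere). Lemma \ref{lem:cp-1} then gives $U_0 \le u \le U_c$ on $M \times (0, T^*)$ in a single stroke; since $U_c$ is locally bounded, the blow-up criterion of Lemma \ref{lem:existence} cannot trigger, so $T^*=\infty$ needs no Gronwall argument, and with the lower barrier at $c_1 = 0$ the ratio bound $0 \le u/U_0 - 1 \le U_c/U_0 - 1 = O(1/t)$ falls out immediately. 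Your sublinear Gronwall estimate for global existence and your preparation at a positive time $t_0$ (extracting the lower pinch $u(t_0) \ge \varepsilon S$ from $u^m(t_0) \ge t_0 \min_M \mathcal{K}_{g_0}(u_0) > 0$) are both valid, but they become redundant once you notice that the degenerate initial value $U_0(\cdot, 0) = 0$ makes the comparison hypotheses trivially satisfied at $t = 0$ without any positivity of $u_0$. That $U_0$ is a legitimate separable solution despite vanishing identically at $t = 0$ is precisely the ``friendly giant'' feature the paper flags at the end of Section \ref{s:SS}.
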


\begin{proof} Let $U_c$ be the separable solution defined in \eqref{eq:separable}. Let $c$  be large so that
\[
0\equiv U_0(0)\le u_0< U_c(0).
\]
By Lemma \ref{lem:cp-1}, we have
\[
U_0\le u\le U_c \quad \mbox{in }M\times (0,T^*).
\]
Since $U_c$ is locally uniformly bounded, by Lemma \ref{lem:existence} we must have $T^*=\infty$. The two sides bounds also imply that
\[
0\le \frac{u}{U_0}-1 \le \frac{U_c}{U_0}-1= (\frac{\frac{c}{t} +\frac{m-1}{m}}{\frac{m-1}{m}})^{\frac{1}{m-1}}-1=O(\frac{1}{t}).
\]
The proposition is proved.

\end{proof}

When $m=\frac{n-2\sigma}{n+2\sigma}$, we will need to use the modulus
\[
\w_t(\rho):=\sup_{X\in M}\int_{\mathcal{B}_\rho(X)} u^{m+1}(t)\,\ud vol_{g_0},\quad  \rho>0.
  \]

\begin{lem}\label{lem:lp-l-infty} Suppose that $\frac{n-2\sigma}{n+2\sigma}\le  m<1$ and
\[
 \|u(T)\|_{ L^{m+1}(M)}\le \bar C
\]
for some $0<T\le T^*$. Then
\[
\max_{[0,T]} \|u\|_{C^{0}(M)}\le C,
\]
where  $C>0$ depends only on $M,\sigma, \Lda, m, T, \bar C$ and $\|u_0\|_{C^0(M)}$, and further on the modulus $\w_T(\cdot)$
when $m=\frac{n-2\sigma}{n+2\sigma}$.
\end{lem}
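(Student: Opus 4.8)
The plan is to run a bootstrap/iteration in the time-integrated form \eqref{eq:picard}, upgrading integrability of $u$ step by step until we reach $L^\infty$, using the regularity machinery of Section \ref{s:ER} as the engine. Since $u^m$ is nondecreasing in $t$ (Lemma \ref{lem:existence}), the bound $\|u(T)\|_{L^{m+1}(M)}\le \bar C$ together with $\pa_t u^m=\mathcal{K}_{g_0}(u)\ge 0$ gives $\|u(t)\|_{L^{m+1}(M)}\le \bar C$ for all $t\in[0,T]$ (as $u^{m}(t)\le u^m(T)$ pointwise, hence $u(t)\le u(T)$, hence the $L^{m+1}$ norm is controlled). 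So from the start we have a uniform-in-time $L^{m+1}$ bound. The key point is that $m+1$ is comfortably above the "integrability threshold'' $\frac{n}{n-2\sigma}$ exactly in the stated range $m\ge\frac{n-2\sigma}{n+2\sigma}$: indeed $m+1\ge\frac{2n}{n+2\sigma}>\frac{n}{n-2\sigma}$ iff $(n+2\sigma)(n-2\sigma)<2n(n-2\sigma)$... one checks $\frac{2n}{n+2\sigma}>\frac{n}{n-2\sigma}\iff 2(n-2\sigma)>n+2\sigma\iff n>6\sigma$, which need not hold, so this naive comparison is not the mechanism; instead the gain comes from the sublinear exponent $m<1$.

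Here is the actual mechanism I would use. Write \eqref{eq:picard} as $u(t)^m=u_0^m+\int_0^t\mathcal{K}_{g_0}(u)\,\ud s$. Suppose inductively $u\in L^\infty((0,T);L^{p}(M))$ with $p\ge m+1$. By the Riesz estimate \eqref{eq:riesz-bound-1}, if $p<\frac{n}{2\sigma}$ then $\mathcal{K}_{g_0}(u)\in L^\infty((0,T);L^{\frac{np}{n-2\sigma p}}(M))$, so $\int_0^t\mathcal{K}_{g_0}(u)\,\ud s\in L^\infty((0,T);L^{\frac{np}{n-2\sigma p}}(M))$, hence $u^m\in L^\infty((0,T);L^{\frac{np}{n-2\sigma p}})$, i.e. $u\in L^\infty((0,T);L^{\frac{mnp}{n-2\sigma p}})$. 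The iteration map on the exponent is $p\mapsto \Phi(p):=\frac{mnp}{n-2\sigma p}$. Because $m<1$, one verifies $\Phi(p)>p$ precisely when $p>\frac{n(1-m)}{2\sigma}$, and — this is where the lower bound on $m$ is used — one checks $m+1>\frac{n(1-m)}{2\sigma}$ is equivalent, after clearing denominators, to $m>\frac{n-2\sigma}{n+2\sigma}$ (with equality as the borderline). So in the strict range $\frac{n-2\sigma}{n+2\sigma}<m<1$ the starting exponent $m+1$ already lies in the region where $\Phi$ is strictly increasing and moves toward $\frac{n}{2\sigma}$; a standard argument shows finitely many iterations push $p$ past $\frac{n}{2\sigma}$ (the gap does not shrink to zero because $\Phi(p)-p$ is bounded below on any interval $[m+1,\frac{n}{2\sigma}]$, and once $p$ is close to $\frac{n}{2\sigma}$ the image blows up), and then \eqref{eq:riesz-bound-2} gives $\mathcal{K}_{g_0}(u)\in L^\infty((0,T);C^\alpha(M))$, hence $u^m$ and so $u$ is bounded on $M\times[0,T]$, with all constants depending only on the listed data.

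The borderline $m=\frac{n-2\sigma}{n+2\sigma}$ is the genuinely delicate case, and this is where I expect the main obstacle: then $m+1=\frac{2n}{n+2\sigma}$ sits exactly at the fixed exponent of the Hardy–Littlewood–Sobolev duality and $\Phi(m+1)=m+1$, so the crude iteration stalls — this is the familiar $L^\infty$-failure at the critical Sobolev exponent. Here I would invoke the localized regularity theory of Section \ref{s:ER}: rewrite the integral equation locally as in \eqref{eq:local-coord}, absorb the far-field term $h$ via Lemma \ref{lem:tail} (it is bounded by an average of $u^m$, hence by the $L^{m+1}$ bound), and set up the inequality \eqref{eq:local-1} with $V=u^{m-1}\cdot$(a bounded factor). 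The critical observation is that $V\in L^{\frac{n}{2\sigma}}$ locally with \emph{small} norm on small balls: since $m-1=-\frac{4\sigma}{n+2\sigma}$, one has $u^{m-1}\in L^{\frac{n}{2\sigma}}_{\mathrm{loc}}\iff u\in L^{\frac{2n}{n+2\sigma}}_{\mathrm{loc}}=L^{m+1}_{\mathrm{loc}}$, which we have; and the smallness of $\|V\|_{L^{n/2\sigma}}$ on a ball $\mathcal{B}_\rho(X)$ is controlled by $\w_T(\rho)^{(1-m)/(m+1)}$, which tends to $0$ as $\rho\to0$ — this is exactly why the hypothesis on the modulus $\w_T(\cdot)$ is imposed only in this case. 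With this smallness, Proposition \ref{prop:small_to_regular} (equivalently Theorem \ref{thm:improve-int}) applies on small balls to upgrade $u$ from $L^{m+1}$ to $L^{q}$ for some $q>\frac{n}{n-2\sigma}$ strictly larger, after which the sub-threshold part of $V$ has supercritical integrability and the finite Moser-type iteration above runs to completion, yielding $u\in L^\infty(M\times[0,T])$. Throughout, the time-monotonicity of $u^m$ and \eqref{eq:picard} let one convert the large-time $L^{m+1}$ bound at $t=T$ into uniform-in-$t$ bounds on $[0,T]$, and the factor $e^{s-t}$ built into \eqref{eq:local-1} is not needed here ($T$ is finite) but does no harm.
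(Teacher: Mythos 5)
Your proposal is essentially correct and, for the critical case $m=\frac{n-2\sigma}{n+2\sigma}$, follows the same route as the paper: introduce the potential $\tilde V=u^{1-m}$, observe that $\tilde V^{n/(2\sigma)}=u^{m+1}$ so that $\|\tilde V\|_{L^{n/(2\sigma)}(\mathcal{B}_\rho)}=\w_T(\rho)^{2\sigma/n}$ (using monotonicity of $u$ in $t$), and feed the resulting smallness on small balls into Lemma \ref{lem:tail} and Proposition \ref{prop:small_to_regular}, then bootstrap. Note one sign error: you wrote $V=u^{m-1}$, which is a negative power of $u$; it should be $u^{1-m}$ (the paper's $\tilde V$). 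Your subsequent exponent $\tfrac{1-m}{m+1}=\tfrac{2\sigma}{n}$ is the right one and only makes sense with the positive power, so this is a slip rather than a conceptual gap.

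For the strictly subcritical range $\frac{n-2\sigma}{n+2\sigma}<m<1$ you take a genuinely different and arguably cleaner route than the paper. Whereas the paper keeps the same local/Proposition~\ref{prop:small_to_regular} machinery and uses H\"older to bound $\|\tilde V\|_{L^{n/(2\sigma)}(\mathcal{B}_\rho)}$ by a positive power of $vol(\mathcal{B}_\rho)$, you iterate the mild form \eqref{eq:picard} globally via the Riesz estimate \eqref{eq:riesz-bound-1}, with exponent map $\Phi(p)=\frac{mnp}{n-2\sigma p}$. Your algebra is correct: $\Phi(p)>p$ iff $p>\frac{n(1-m)}{2\sigma}$, and $m+1>\frac{n(1-m)}{2\sigma}$ iff $m>\frac{n-2\sigma}{n+2\sigma}$, with equality at the borderline (so that $\Phi(m+1)=m+1$ at the critical exponent, as you note). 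Since $\Phi(p)-p$ is increasing on $[m+1,n/(2\sigma))$, the gain per step is bounded below by its value at $p=m+1$, and $\Phi$ blows up near $n/(2\sigma)$, so finitely many iterations cross $n/(2\sigma)$ and then \eqref{eq:riesz-bound-2} gives the $C^0$ bound. This avoids localization entirely in the subcritical case; the trade-off is that in the critical case you still need the local argument, so you end up using two pieces of machinery where the paper uses one. Both are valid. (Your opening remark that the naive comparison $m+1>\frac{n}{n-2\sigma}$ is not the mechanism is correct and a useful observation — it would force $n>6\sigma$.)

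Two small points you leave implicit but should be said: (a) the local $L^q$ upgrade from Proposition~\ref{prop:small_to_regular} is on a fixed small ball $B_1$ around each center, so one must cover $M$ by finitely many such balls to get a global $L^q$ bound before re-entering the global bootstrap; (b) the time-uniform $L^{m+1}$ bound on $[0,T]$ indeed follows from $u^m$ nondecreasing (hence $u(t)\le u(T)$ pointwise) plus $\|u(T)\|_{L^{m+1}}\le\bar C$, as you say — this is the same observation the paper makes.
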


\begin{proof} Let
\[
\tilde V(X,t)= u(X,t)^{1-m}.
\]
Note that  $u(X,t)$ is increasing in $t$ for any fixed $X$. We have   $\w_t(\rho)\le \w_T(\rho)$ for $t\le T$. Moreover,
if $m>\frac{n-2\sigma}{n+2\sigma}$, for any $\bar X\in M$ and $\rho>0$, by the H\"older inequality
\be \label{eq:sub-small}
\begin{split}
&\int_{\mathcal{B}_\rho(\bar X)} |\tilde V(t)|^{\frac{n}{2\sigma}}\,\ud vol_{g_0}\le
\int_{\mathcal{B}_\rho(\bar X)} |\tilde V(T)|^{\frac{n}{2\sigma}}\,\ud vol_{g_0}\\&
\le \left(\int_{\mathcal{B}_\rho(\bar X)} |u(T)|^{m+1}\,\ud vol_{g_0} \right)^{\frac{n(1-m)}{2\sigma (1+1)}} \left(vol_{g_0}(\mathcal{B}_\rho(\bar X) )\right)^{\frac{2\sigma(m+1)}{(n+2\sigma) m-(n-2\sigma)}}\\ &
\le C \bar C ^{\frac{n(1-m)}{2\sigma (1+1)}} \rho^{\frac{2n\sigma(m+1)}{(n+2\sigma) m-(n-2\sigma)}},
\end{split}
\ee
which can be made to be small by choosing small $\rho$ but independent of modulus $\w_T(\cdot)$.   Then the lemma is a consequence of Lemma \ref{lem:tail}, Proposition \ref{prop:small_to_regular}, and a bootstrap argument.




\end{proof}

\begin{cor}\label{cor:lm-bl} If $\frac{n-2\sigma}{n+2\sigma}\le m<1$, then $T^*<\infty$ and
\[
\lim_{t\to T^*} \int_M u^{m+1}(t)\,\ud vol_{g_0} =\infty.
\]
\end{cor}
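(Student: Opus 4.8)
\textbf{Proof proposal for Corollary \ref{cor:lm-bl}.}

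The plan is to argue by contradiction: suppose $T^*<\infty$ but $\int_M u^{m+1}(t)\,\ud vol_{g_0}$ stays bounded along a sequence $t_j\to T^*$ (and hence, by monotonicity of $u$ in $t$, is bounded for all $t<T^*$), OR suppose $T^*=\infty$. In either case I want to produce a uniform $C^0$ bound up to $T^*$ and contradict the blow-up alternative in Lemma \ref{lem:existence}. First I would handle the case $m>\frac{n-2\sigma}{n+2\sigma}$: here Lemma \ref{lem:lp-l-infty} applies directly once we have a uniform $L^{m+1}$ bound on $[0,T)$ for any $T<T^*$, giving $\max_{[0,T]}\|u\|_{C^0(M)}\le C$ with $C$ depending on $T$; since the bound is uniform in $T<T^*$ only if we first establish the $L^{m+1}$ bound is uniform, I need to rule out $\int_M u^{m+1}\to\infty$ being the sole blow-up mechanism, which is exactly what the statement claims. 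So the logical structure is: if $T^*<\infty$, then by Lemma \ref{lem:existence} $\|u(t)\|_{C^0(M)}\to\infty$; feeding this into Lemma \ref{lem:lp-l-infty} (contrapositive) forces $\|u(t_j)\|_{L^{m+1}(M)}\to\infty$ along any sequence with $\|u(t_j)\|_{C^0}\to\infty$, i.e. $\lim_{t\to T^*}\int_M u^{m+1}(t) = \infty$ once we know $T^*<\infty$.

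The remaining point, then, is to show $T^*<\infty$ when $\frac{n-2\sigma}{n+2\sigma}\le m<1$. For this I would test the flow against a suitable function and derive an ODE differential inequality that blows up in finite time. The natural choice is to use the separable solution structure from Section \ref{s:SS}: recall $U_c(X,t)=(c+\frac{m-1}{m}t)^{\frac{1}{m-1}}S(X)$ with $c>0$ (since $m<1$), and $\frac{m-1}{m}<0$, so $U_c$ itself blows up as $t\to \frac{m}{1-m}c$. Choosing $c$ small enough that $U_c(0)=c^{\frac{1}{m-1}}S(\cdot)$ lies below $u_0$ somewhere — more precisely, since $S$ is bounded above and $u_0\ge 0$ is not identically zero, pick $c$ so that $u_0 \ge U_c(\cdot,0)$ fails, hmm — actually I want $u_0 \ge U_c(0)$ to use the comparison principle to push $u$ above a blowing-up barrier, so I need $c$ \emph{large}: choose $c$ large so that $U_c(\cdot,0)=c^{\frac 1{m-1}}S(\cdot)$ is small (since $\frac{1}{m-1}<0$) and lies below $u_0$ with strict inequality somewhere (possible because $u_0\not\equiv 0$ and $u_0$ is continuous, so $u_0>0$ on an open set, while $c^{\frac1{m-1}}\|S\|_\infty\to 0$). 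Then Lemma \ref{lem:cp-1} gives $u(\cdot,t)> U_c(\cdot,t)$ on $M\times(0,T^*)$, and since $U_c(\cdot,t)\to\infty$ as $t\to \frac{m}{1-m}c =: T_c$, the blow-up alternative of Lemma \ref{lem:existence} forces $T^*\le T_c<\infty$.

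The main obstacle is a bookkeeping subtlety rather than a deep one: verifying that the comparison principle Lemma \ref{lem:cp-1} applies with $f_1=u$, $f_2=U_c$, which requires $U_c \in C^1([0,T];C^0(M))$ for $T<T_c$ (clear from the explicit formula since $S$ is continuous and the time factor is smooth on $[0,T_c)$) and requires the strict-somewhere ordering of initial data, which I arranged above. One should also double-check the endpoint case $m=\frac{n-2\sigma}{n+2\sigma}$: the separable solution $S$ still exists by Proposition \ref{prop:subcritical} (with $m_i\downarrow$ limit, but actually at the endpoint we need $\bar J_{(n-2\sigma)/(n+2\sigma)}$ achieved — this is not guaranteed without \eqref{eq:pole-end} and the strict inequality of Proposition \ref{prop:critial}). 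To avoid this gap I would instead, for the endpoint $m=\frac{n-2\sigma}{n+2\sigma}$, run the barrier argument with a \emph{subcritical} exponent: note $\pa_t u^m=\mathcal{K}_{g_0}(u)$ and for the comparison we only need \emph{some} subsolution of the $m$-equation that blows up; alternatively, observe directly that $\frac{\ud}{\ud t}\int_M u\,\ud vol_{g_0}$ or a similar linear functional satisfies, via Jensen/Hölder against the HLS lower bound and the fact that $\mathcal K_{g_0}$ has a positive lowest eigenvalue, a differential inequality $\frac{\ud}{\ud t}(\text{something})^m \ge c\,(\text{something})$ with $m<1$, whose solutions blow up in finite time. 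Either route closes the case; I would present the separable-barrier version when $m>\frac{n-2\sigma}{n+2\sigma}$ and the eigenvalue/integral version (which does not need $\bar J_m$ achieved) as the uniform argument covering the endpoint.
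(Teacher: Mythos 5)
Your overall architecture matches the paper's: a separable barrier to force $T^*<\infty$, then monotone convergence plus Lemma~\ref{lem:lp-l-infty} to force the $L^{m+1}$ blow-up. However, the barrier step as you set it up is wrong. You compare $u$ against $U_c$ starting at $t=0$ and argue that taking $c$ large makes $U_c(\cdot,0)=c^{1/(m-1)}S$ small and "lies below $u_0$ with strict inequality somewhere." But Lemma~\ref{lem:cp-1} needs $u_0\ge U_c(\cdot,0)$ \emph{everywhere} and strict somewhere, and this cannot hold: $u_0$ is only assumed nonnegative, so it may vanish on a set, while $c^{1/(m-1)}S>0$ everywhere. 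No matter how large $c$ is, the ordering fails at the zeros of $u_0$. The paper's proof avoids this by starting the comparison at a small $t_0>0$: since $\pa_t u^m|_{t=0}=\mathcal{K}_{g_0}(u_0)>0$ pointwise, $u(\cdot,t_0)$ is strictly positive on the compact $M$, and then taking $c$ large does give $U_c(\cdot,t_0)<u(\cdot,t_0)$ on all of $M$. You should shift your comparison to $[t_0,T^*)$ for a fixed small $t_0>0$.

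Your concern about the existence of $S$ at the endpoint $m=\frac{n-2\sigma}{n+2\sigma}$ is a legitimate and sharp observation: Proposition~\ref{prop:subcritical} does not cover the endpoint, and Proposition~\ref{prop:critial} needs an Aubin-type strict inequality. Two clean fixes are available (neither requires $S$): (a) the comparison principle only needs a \emph{subsolution} of \eqref{eq:st0}, and a large constant $\kappa$ works, since $\mathcal{K}_{g_0}(\kappa)=\kappa\,\mathcal{K}_{g_0}(1)\ge\kappa^m$ once $\kappa^{1-m}\ge (\min_M \mathcal{K}_{g_0}(1))^{-1}$, so $(c+\frac{m-1}{m}t)^{1/(m-1)}\kappa$ is a blowing-up barrier; or (b) your integral route, made precise: integrating $\pa_t u^m=\mathcal{K}_{g_0}(u)$ over $M$ and using \eqref{eq:K-2} gives $\frac{\ud}{\ud t}\int_M u^m \ge c\int_M u$, while H\"older gives $\int_M u\ge c'\left(\int_M u^m\right)^{1/m}$ since $m<1$, so $W(t)=\int_M u^m$ satisfies $W'\ge c''W^{1/m}$ with $1/m>1$ and $W(t_0)>0$, a Riccati inequality that blows up in finite time. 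Route (b) is in fact cleaner and works for \emph{all} $0<m<1$ (matching the paper's later remark), not just $m\ge\frac{n-2\sigma}{n+2\sigma}$. Your phrasing "positive lowest eigenvalue of $\mathcal{K}_{g_0}$" is not the right justification (the spectrum of this compact operator accumulates at $0$); what one actually uses is the kernel lower bound \eqref{eq:K-2} to get $\int_M\mathcal{K}_{g_0}(u)\ge c\int_M u$.

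For the second half, your appeal to the "contrapositive" of Lemma~\ref{lem:lp-l-infty} is essentially the paper's argument, but it glosses over a point: at the endpoint $m=\frac{n-2\sigma}{n+2\sigma}$ the constant in Lemma~\ref{lem:lp-l-infty} depends on the modulus $\w_T(\cdot)$ of $u(T)$, so you need a single $L^{m+1}$ limit function to control the modulus uniformly. The paper handles this by noting $u(\cdot,t)$ is increasing in $t$, so if $\int_M u^{m+1}(t)$ stayed bounded then by monotone convergence $u(\cdot,T^*)\in L^{m+1}(M)$, its modulus is well-defined and small at small scales, and Lemma~\ref{lem:lp-l-infty} then gives $\max_{[0,T^*]}\|u\|_{C^0}<\infty$, contradicting Lemma~\ref{lem:existence}. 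Stating it this way closes the gap.
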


\begin{proof}
For a small $t_0>0$, choose a large $c$ such that
\[
U_c(X,t_0)=(c-\frac{1-m}{m} t_0)^{-\frac{1}{1-m}} S < u(X,t_0) \quad \mbox{on } M.
\]
By the  comparison principle, we have
\[
u\ge U_c \quad \mbox{in }M\times [t_0,T^*).
\]
Hence, $T^*\le \frac{cm}{1-m}<\infty$.

As $u(X,t)$ is increasing in $t$ for any fixed $X\in M$, the limit in the lemma exists. If the limit is finite, by the monotone convergence theorem we have
\[
\lim _{t\to T^*} u(\cdot ,t)=  u(\cdot,T^*) \in L^{m+1}(M).
\]
Using Lemma \ref{lem:lp-l-infty}, we obtain
\[
\max_{[0,T^*]} \|u\|_{C^{0}(M)}\le C,
\]
which contradicts to Lemma \ref{lem:existence}. Therefore, the corollary holds.

\end{proof}

\begin{prop}\label{prop:fde-1} If $\frac{n-2\sigma}{n+2\sigma}\le m<1$, then
\[
\frac{1}{C} (T^*-t)^{-\frac{1}{1-m}}\le \|u(t)\|_{L^{m+1}(M)}\le C (T^*-t)^{-\frac{1}{1-m}},
\]
where $C>0$ is a constant.
\end{prop}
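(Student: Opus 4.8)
The plan is to prove both inequalities by a scaling/ODE-comparison argument centered on the blow-up time $T^*$, combined with the integral bound from Lemma \ref{lem:lp-l-infty}. The key observation is that the flow \eqref{eq:pme} is invariant under the parabolic-type scaling $u \mapsto \lambda u(\cdot, \lambda^{1-m} t)$ (which preserves the equation $\partial_t u^m = \mathcal{K}_{g_0}(u)$ since $\partial_t (\lambda u)^m = \lambda^m \cdot \lambda^{1-m} (\partial_t u^m) = \lambda \mathcal{K}_{g_0}(u) = \mathcal{K}_{g_0}(\lambda u)$), and that the separable solution $U_c$ of \eqref{eq:separable} with $c > 0$ (recall $h_c = (c - \frac{1-m}{m}t)^{-\frac{1}{1-m}}$ when $m<1$, blowing up at $t = \frac{cm}{1-m}$) is the natural barrier. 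Throughout I write $N(t) := \|u(t)\|_{L^{m+1}(M)}$; by Corollary \ref{cor:lm-bl}, $N(t) \to \infty$ as $t \to (T^*)^-$, and $N$ is nondecreasing since $u(X,t)$ is increasing in $t$.

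\textbf{Lower bound.} First I would establish $N(t) \le C(T^*-t)^{-\frac{1}{1-m}}$, i.e. the \emph{upper} bound on $N$ — wait, that is the harder direction, so let me instead start with the lower bound $N(t) \ge \frac{1}{C}(T^*-t)^{-\frac{1}{1-m}}$, which follows directly from Lemma \ref{lem:lp-l-infty} by contradiction on a sequence. Suppose not: then there is a sequence $t_j \to (T^*)^-$ with $(T^*-t_j)^{\frac{1}{1-m}} N(t_j) \to 0$, hence $N(t_j) \le \bar C_j$ with $\bar C_j(T^*-t_j)^{\frac{1}{1-m}} \to 0$. Apply Lemma \ref{lem:lp-l-infty} with $T = t_j$: this gives $\max_{[0,t_j]}\|u\|_{C^0(M)} \le C(t_j, \bar C_j, \omega_{t_j})$. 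The issue is that this constant depends on $t_j$ through $T$ and through the modulus $\omega_{t_j}$, so I need a quantitative version. The cleaner route: rescale. Set $\lambda_j = N(t_j)$ and $v_j(X,\tau) = \lambda_j^{-1} u(X, t_j + \lambda_j^{1-m}\tau)$; then $v_j$ solves \eqref{eq:pme} with $\|v_j(\cdot,0)\|_{L^{m+1}} = 1$, and (using monotonicity and Lemma \ref{lem:lp-l-infty} applied to $v_j$ on a fixed time interval, with the smallness \eqref{eq:sub-small} for $m > \frac{n-2\sigma}{n+2\sigma}$, or the modulus estimate for $m = \frac{n-2\sigma}{n+2\sigma}$) one gets $\|v_j(\cdot,\tau)\|_{C^0(M)} \le C$ for $\tau \in [0, \tau_0]$ with $\tau_0$ a universal constant. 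Translating back, $u$ stays bounded on $[t_j, t_j + \tau_0 \lambda_j^{1-m}]$; since $\lambda_j^{1-m} = N(t_j)^{1-m} \ge (C'(T^*-t_j)^{-\frac{1}{1-m}})^{1-m}$ would force... hmm, this needs $N(t_j)$ \emph{large}, which is the wrong direction. Instead I use that if $N(t_j) \to \infty$ slowly then the lifespan added is $\gtrsim N(t_j)^{1-m}$, pushing past $T^*$ — contradiction if $N(t_j)^{1-m} \gg $ nothing... Let me just say: the lower bound is the routine half, obtained by the comparison principle. From below, $u$ is bounded below by a fixed positive separable solution $U_{c_0}$ (from the proof of Corollary \ref{cor:lm-bl}, $u \ge U_c$ on $[t_0, T^*)$ for suitable $c$), which gives $u(X,t) \ge (c - \frac{1-m}{m}t)^{-\frac{1}{1-m}} S(X)$, and since $T^* \le \frac{cm}{1-m}$ with equality forced (else $U_c$ would extend $u$ past $T^*$), we get $u(X,t) \ge (\frac{1-m}{m})^{-\frac{1}{1-m}}(T^*-t)^{-\frac{1}{1-m}} S(X)$, hence $N(t) \ge \frac{1}{C}(T^*-t)^{-\frac{1}{1-m}}$.

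\textbf{Upper bound.} For $N(t) \le C(T^*-t)^{-\frac{1}{1-m}}$, I would argue by contradiction combined with rescaling and the uniqueness of the blow-up time. Fix $t_0 < T^*$ close to $T^*$. Choose $c = c(t_0)$ minimal so that the separable solution $U_c$ (with the blow-up normalization) satisfies $U_c(\cdot, t_0) \ge u(\cdot, t_0)$ on $M$ — concretely, $(c - \frac{1-m}{m}t_0)^{-\frac{1}{1-m}} \sup_M S \cdot (\inf_M S)^{-1} \ge \|u(t_0)\|_{C^0}$ would suffice, but I want the $L^{m+1}$ norm instead, so I compare $u(t_0)$ with a multiple of $S$ in $L^{m+1}$: there is $\alpha = \alpha(t_0)$ with $u(\cdot, t_0) \le \alpha S(\cdot)$ pointwise and $\alpha \|S\|_{L^{m+1}} \le C N(t_0)$ — this last step needs $\|u(t_0)\|_{C^0} \le C N(t_0)$, which is exactly Lemma \ref{lem:lp-l-infty}'s estimate but made scale-invariant via the rescaling $v_j$ above (the universal bound $\|v_j\|_{C^0} \le C$ at $\tau = 0$ says $\|u(t_0)\|_{C^0} \le C N(t_0)$, \emph{provided} $N(t_0)$ is large; the smallness of $\tilde V_{\va}$ needed in Proposition \ref{prop:small_to_regular} holds after rescaling since $\int |\tilde v_j|^{1-m}$ on small balls is controlled by \eqref{eq:sub-small} uniformly in $j$). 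Then by the comparison principle (Lemma \ref{lem:cp-1}), $u \le U_c$ on $M \times [t_0, T^*)$, where $U_c$ blows up at time $t_0 + \frac{m}{1-m}(c - \frac{1-m}{m}t_0)$... by matching the blow-up times and using that $u$ cannot be extended past $T^*$ while $U_c$ cannot blow up before $u$ does (comparison forces $U_c$'s blow-up time $\ge T^*$), one gets $c$ such that $U_c$ blows up exactly at a time $\ge T^*$, whence $\alpha(t_0) = (c - \frac{1-m}{m}t_0)^{-\frac{1}{1-m}} \le C(T^*-t_0)^{-\frac{1}{1-m}}$, and therefore $N(t_0) \le C\alpha(t_0)\|S\|_{L^{m+1}} \le C(T^*-t_0)^{-\frac{1}{1-m}}$.

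\textbf{Main obstacle.} The crux — and the step I expect to require the most care — is making the estimate of Lemma \ref{lem:lp-l-infty} \emph{scale-invariant}, i.e. proving $\|u(t)\|_{C^0(M)} \le C\|u(t)\|_{L^{m+1}(M)}$ with $C$ universal once $\|u(t)\|_{L^{m+1}(M)}$ is bounded below (equivalently, near blow-up). The subtlety is the $m = \frac{n-2\sigma}{n+2\sigma}$ endpoint, where \eqref{eq:sub-small} degenerates and one must instead control the concentration modulus $\omega_t(\rho)$ uniformly; here I would invoke the monotonicity $\omega_t(\rho) \le \omega_{T^*}(\rho)$ together with an absolute-continuity/no-concentration argument for the rescaled sequence $v_j$ — if a fixed fraction of the $L^{m+1}$ mass of $v_j$ concentrated at a point, one could extract a nontrivial entire solution of the limiting integral equation violating the sharp HLS constant, as in the proof of Proposition \ref{prop:critial}. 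Closing this concentration-compactness loophole cleanly is the real content; everything else is comparison-principle bookkeeping against the separable solutions $U_c$.
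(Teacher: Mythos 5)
There is a genuine gap, in fact several. The central problem is that both halves of your comparison-principle argument read the blow-up--time inequality backwards. Comparison from below ($u \ge U_{c}$) gives only $T^{*} \le \frac{cm}{1-m}$; there is no reason for equality, and the parenthetical ``else $U_{c}$ would extend $u$ past $T^{*}$'' is not a valid deduction: a \emph{lower} barrier that stays finite past $T^{*}$ does not contradict the blow-up of $u$. Conversely, comparison from above ($u \le U_{c}$ on $[t_{0},T^{*})$) forces $T^{*} \ge \frac{cm}{1-m}$, because if $U_{c}$ stayed finite up to $T^{*}$ then $\|u\|_{C^{0}}$ would stay bounded, contradicting Lemma \ref{lem:existence}; so the blow-up time of the upper barrier is \emph{at most} $T^{*}$, not at least. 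With the correct direction, the upper-barrier argument yields $\alpha(t_{0}) \ge c\,(T^{*}-t_{0})^{-\frac{1}{1-m}}$ (a lower bound on $\alpha$), which combined with $N(t_{0}) \le \alpha(t_{0})\|S\|_{L^{m+1}}$ gives no information about an upper bound on $N(t_{0})$. To close the argument you would additionally need a two-sided pointwise comparability of $u(t_{0})$ with a multiple of $S$, which is a Harnack-type estimate the paper only proves on $\Sn$ (Theorem \ref{thm:harnack}), not in this generality. You correctly flag the scale-invariant bound $\|u(t)\|_{C^{0}} \lesssim \|u(t)\|_{L^{m+1}}$ as the ``main obstacle,'' but it is more than an obstacle: it is an unproven estimate as hard as the proposition itself, especially at the critical exponent $m=\frac{n-2\sigma}{n+2\sigma}$ where the modulus $\omega_{t}$ appears.

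The paper's proof avoids the comparison principle, the Riesz-potential machinery and concentration--compactness entirely. It observes, by Cauchy--Schwarz, that the HLS-type functional $J_{m}(u)$ is nondecreasing along the flow, and then works directly with $Z(t)=\bigl(\int_{M}u^{m+1}\bigr)^{\frac{m-1}{m+1}}=N(t)^{m-1}$: one computes $Z'(t)=\frac{m-1}{m}J_{m}(u)$, so $Z'\ge-\frac{1-m}{m}\bar J_{m}$ (since $J_{m}\le\bar J_{m}$) and $Z''\le 0$ (since $J_{m}$ is nondecreasing and $m<1$). Since $Z(t)\to 0$ as $t\to T^{*}$ by Corollary \ref{cor:lm-bl}, the first differential inequality integrates to $Z(t)\le\frac{1-m}{m}\bar J_{m}(T^{*}-t)$, giving the lower bound on $N(t)$, and concavity of $Z$ together with $Z(T)\to 0$ gives $Z(t)\ge\frac{Z(0)}{T^{*}}(T^{*}-t)$, giving the upper bound. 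This energy/ODE argument is what you should aim for: it is elementary, it uses only the monotonicity already established, and it never requires converting $L^{m+1}$ control into $C^{0}$ control.
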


\begin{proof}
By a direct computation using the first equation in \eqref{eq:IP}, we obtain
\begin{align*}
\frac{\ud }{\ud t}\int_{M}u^{m+1}\,\ud vol_{g_0} =\frac{m+1}{m}\int_{M} u \mathcal{K}_{g_0} (u) \,\ud vol_{g_0}  \ge 0,
\end{align*}
\begin{align*}
\frac{\ud }{\ud t}\int_{M} u \mathcal{K}_{g_0} (u)\,\ud vol_{g_0}  &=2 \int_{M} \pa_t u \mathcal{K}_{g_0} (u)\,\ud vol_{g_0} \\&
=\frac{2}{m}\int_{M} | \mathcal{K}_{g_0} (u)|^2 u^{1-m}\,\ud vol_{g_0} .
\end{align*}
We may drop $ \ud vol_{g_0}$ in the follow integrals. Hence,
\begin{align*}
\frac{\ud J_m(u)}{\ud t}= \frac{2}{m}\Big(\int_M u^{m+1}\Big)^{-\frac{2}{m+2}}\left[\int_{M} | \mathcal{K}_{g_0} (u)|^2 u^{1-m} -\frac{(\int_{M} u \mathcal{K}_{g_0} (u))^2}{\int_{M}u^{m+1}} \right].
\end{align*}
By the H\"older inequality,
\begin{align*}
\left(\int_{M} u \mathcal{K}_{g_0} (u)\right)^2&=\left(\int_{M} u^{\frac{1+m}{2}} \big(u^{\frac{1-m}{2}}\mathcal{K}_{g_0} (u)\big) \right)^2\\&
\le \left(\int_{M} | \mathcal{K}_{g_0} (u)|^2 u^{1-m} \right) \int_M u^{m+1}.
\end{align*}
It follows that
\[
\frac{\ud J_m(u)}{\ud t} \ge 0.
\]

Set
\[
Z(t)= \left(\int_{M} u^{m+1}\right)^{-\frac{2}{m+1}+1} .
\]
Then
\begin{align*}
Z'(t)= \frac{m-1}{m} J_m(u)\ge -\frac{1-m}{m} \bar J_m.
\end{align*}
By integration, for $0<t<T<T^*$,
\[
Z(t)\le \frac{1-m}{m} \bar J_m (T-t) +Z(T).
\]
Sending $T$ to $T^*$, by Corollary \ref{cor:lm-bl} we have
\[
Z(t) \le \frac{1-m}{m} \bar J_m \cdot (T^*-t) .
\]
On the other hand,
\[
Z''(t)= \frac{m-1}{m}\frac{\ud }{\ud t} J_m(u)\le 0.
\]
For any $0<s<t<T<T^*$, we have
\[
Z(t)\ge Z(T)+\frac{Z(s)-Z(T)}{s-T}(t-T).
\]
Sending $s\to 0$ and $T\to T^*$, by Corollary \ref{cor:lm-bl} we obtain
\[
Z(t) \ge \frac{Z(0)}{T^*}(T^*-t).
\]
Therefore, the proposition is proved.
\end{proof}

In order to study the blow up profile of $u$ near $T^*$, let us introduce a re-normalization of $u$ as follows.  For $0<m<1$, let
\be \label{eq:renorm}
\tilde u(X,\tau)=(T^*-t)^{\frac{1}{1-m}} u(X,t), \quad \tau= -\ln\frac{T^*-t}{T^*}.
\ee
Then we have
\begin{equation} \label{eq:main-rescaled}
\pa_\tau \tilde u ^m=  \mathcal{K}_{g_0}(\tilde u ) -\frac{m}{1-m} \tilde u^m \quad \mbox{in }M\times (0,\infty).
\end{equation}
From Proposition \ref{prop:fde-1}, we know that
\[
\frac{1}{C^{m+1}}\le \int_{M} \tilde u (\tau)^{m+1}\,\ud vol_{g_0} \le C^{m+1} , \quad \tau\in [0,\infty).
\]

\begin{prop}\label{prop:fde-2} If $\frac{n-2\sigma}{n+2\sigma}< m<1$, then
\[
\frac{1}{C}\le \|\tilde u(\tau)\|_{L^\infty(M)}\le C , \quad \tau\in [1,\infty)
\]
and thus
\[
\frac{1}{C} (T^*-t)^{-\frac{1}{1-m}}\le \|u(t)\|_{L^\infty(M)}\le C (T^*-t)^{-\frac{1}{1-m}}, \quad T^*(1-e^{-1}) \le t< T^*,
\]
where $C>0$ is a constant.
\end{prop}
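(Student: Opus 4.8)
The plan is to upgrade the $L^{m+1}$ bounds of Proposition \ref{prop:fde-1}, restated for $\tilde u$, to an $L^\infty$ bound on $\tilde u(\tau)$ uniform for $\tau\ge 1$, and then to deduce the lower bound. The upper bound is the substantive part. First I would observe that, by Proposition \ref{prop:fde-1}, $\|\tilde u(\tau)\|_{L^{m+1}(M)}\le C$ for all $\tau\ge 0$. I want to feed this into the iteration machinery of Section \ref{s:ER}. To that end, rewrite \eqref{eq:main-rescaled} in integrated (mild) form: setting $\alpha=\frac{m}{1-m}$ and integrating in $\tau$,
\[
\tilde u(X,\tau)^m = e^{-\alpha \tau}\tilde u(X,0)^m + \int_0^\tau e^{\alpha(s-\tau)}\mathcal{K}_{g_0}(\tilde u)(X,s)\,\ud s,
\]
so that $w:=\tilde u^m$ satisfies an inequality of the type \eqref{eq:local-1} in local coordinates, with $V = \tilde u^{1-m}=w^{\frac{1-m}{m}}$ and the tail term $h$ controlled by Lemma \ref{lem:tail}. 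Since $\frac{n-2\sigma}{n+2\sigma}<m<1$, the computation \eqref{eq:sub-small} (with $u$ replaced by $\tilde u$, using the uniform $L^{m+1}$ bound) shows that $\int_{\mathcal{B}_\rho(\bar X)}|V|^{n/2\sigma}\,\ud vol_{g_0}$ can be made smaller than the threshold $\bar\delta$ of Proposition \ref{prop:small_to_regular} by choosing $\rho$ small, \emph{uniformly in $\tau$}.

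Next I would run the bootstrap exactly as in Lemma \ref{lem:lp-l-infty}: starting from $w=\tilde u^m\in L^\infty((0,\infty); L^{\frac{m+1}{m}}(M))$ (which is $L^{m+1}$ control of $\tilde u$), Proposition \ref{prop:small_to_regular} combined with the covering of $M$ by finitely many small balls $\mathcal{B}_\rho$ raises the local integrability exponent of $w$; Riesz potential estimates \eqref{eq:riesz-bound-1}--\eqref{eq:riesz-bound-2} then propagate this, and after finitely many steps one crosses the exponent $n/2\sigma$ and lands in $C^\alpha$, hence in $L^\infty$. Because the smallness of $V$ holds uniformly in $\tau$ and the tail bound \eqref{eq:h-infty} controls $h$ by $\fint u^m \le C$ (again uniform in $\tau$ via the $L^{m+1}$ bound and monotonicity), every constant produced along the way is independent of $\tau$. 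This yields $\|\tilde u(\tau)\|_{L^\infty(M)}\le C$ for all $\tau\ge 0$, in particular for $\tau\ge 1$.

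For the lower bound, I would use the comparison principle and the separable solutions. By Lemma \ref{lem:existence}, $\tilde u(X,\tau)$ is (up to the time change) increasing in the original time, so it is bounded below at any fixed later time by its value at, say, $\tau=1/2$; alternatively, fix $\tau_0\ge 1/2$ and let $\underline U$ be the separable solution of \eqref{eq:main-rescaled} of the form $\mathrm{const}\cdot S$ chosen below $\tilde u(\cdot,\tau_0)$ — here $S$ solves $\mathcal{K}_{g_0}(S)=\frac{m}{1-m}S^m$, i.e.\ the steady state of \eqref{eq:main-rescaled}, which exists and is positive by Section \ref{s:SS} since $m>\frac{n-2\sigma}{n+2\sigma}$. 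Lemma \ref{lem:cp-1} gives $\tilde u\ge \underline U$ for $\tau\ge\tau_0$, and since $\underline U$ is a fixed positive function, $\|\tilde u(\tau)\|_{L^\infty(M)}\ge \min_M\underline U \ge 1/C$ for $\tau\ge 1$. Translating via \eqref{eq:renorm} (noting $\tau\ge 1 \iff t\ge T^*(1-e^{-1})$) gives the stated bounds on $\|u(t)\|_{L^\infty(M)}$.

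The main obstacle is tracking the \emph{uniformity in $\tau$} of all constants in the bootstrap: one must check that the smallness of $\|V(\cdot,s)\|_{L^{n/2\sigma}(\mathcal{B}_\rho)}$, the tail estimates of Lemma \ref{lem:tail}, and the Riesz estimates all combine into a single $\tau$-independent chain — which works precisely because $m>\frac{n-2\sigma}{n+2\sigma}$ is a \emph{strict} inequality, so the exponent in \eqref{eq:sub-small} is positive and the smallness is genuine rather than borderline. (At $m=\frac{n-2\sigma}{n+2\sigma}$ this fails, which is why that endpoint is excluded here and treated separately via the modulus $\w_t$.)
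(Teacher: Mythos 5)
Your argument for the upper bound is essentially the paper's: write the rescaled equation in integrated form, observe that the strict inequality $m>\frac{n-2\sigma}{n+2\sigma}$ makes the potential $V=\tilde u^{1-m}$ small in $L^{n/2\sigma}$ on small balls uniformly in $\tau$ (via \eqref{eq:sub-small} and the uniform $L^{m+1}$ bound of Proposition~\ref{prop:fde-1}), then run Lemma~\ref{lem:tail}, Proposition~\ref{prop:small_to_regular} and a bootstrap. That part is fine.

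Your lower bound argument, however, has a genuine gap, and on two counts. First, the claim that $\tilde u$ ``is bounded below at any fixed later time by its value at $\tau=1/2$'' because $u$ is increasing in the original time does not hold: by \eqref{eq:renorm} one has $\tilde u(X,\tau)=(T^*-t)^{1/(1-m)}u(X,t)$, and the factor $(T^*-t)^{1/(1-m)}$ is \emph{decreasing} in $t$, so monotonicity of $u$ in $t$ gives no monotonicity of $\tilde u$ in $\tau$. Second, the comparison with a separable subsolution $\underline U=h(\tau)S$ for the rescaled flow does not produce a useful bound: the scalar $h$ must satisfy $\frac{\ud}{\ud\tau}h^m=\frac{m}{1-m}(h-h^m)$, for which $h\equiv 1$ is the unique nontrivial steady value and, since $m<1$, it is \emph{unstable} — if $h(\tau_0)<1$ (which you must allow, because you choose $\underline U$ below $\tilde u(\cdot,\tau_0)$ without knowing $\tilde u\ge S$), then $h(\tau)\to 0$ and the comparison yields nothing. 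In fact the only constant multiple $cS$ that is a subsolution of the rescaled equation with $m<1$ is $c\ge1$, which is precisely what one cannot assume a priori.

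The paper's lower bound proceeds differently and is quite short: from the just-proved upper bound $\tilde u\le C_1$ and the lower $L^{m+1}$ bound, one has $\int_M\tilde u\ge C_1^{-m}\int_M\tilde u^{m+1}\ge 1/(CC_1^m)$, hence by \eqref{eq:K-2} (the kernel lower bound on a compact manifold) $\mathcal K_{g_0}(\tilde u)(X,s)\ge 1/C$ uniformly in $X$ and $s$. Inserting this in the mild form
\[
\tilde u(X,\tau)^m \ge \int_0^\tau e^{\frac{m}{1-m}(s-\tau)}\,\mathcal K_{g_0}(\tilde u)(X,s)\,\ud s
\ge \frac{1}{C}\cdot\frac{1-m}{m}\bigl(1-e^{-\frac{m}{1-m}}\bigr), \quad \tau\ge1,
\]
gives the pointwise lower bound directly, with no comparison principle or monotonicity in $\tau$ needed. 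You should replace your lower-bound paragraph by this argument.
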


\begin{proof} By integrating \eqref{eq:main-rescaled}, we have
\[
\tilde u(X, \tau)^m= e^{-\frac{m}{1-m} \tau} \tilde u(X, 0)^m +\int_0^{\tau} e^{\frac{m}{1-m}(s-\tau)}  \mathcal{K}_{g_0}(\tilde u )(X,s)\,\ud s.
\]
Since $m>\frac{n-2\sigma}{n+2\sigma}$, we have \eqref{eq:sub-small}. Making use of Lemma \ref{lem:tail}, Proposition \ref{prop:small_to_regular} and a bootstrap argument, we then can obtain
\[
\tilde u(X, \tau) \le C_1
\]
for some $C_1>1$.

On the other hand,
\[
\int_M \tilde u \ge \frac{1}{C_1^{m}} \int_M \tilde u ^{m+1}\ge \frac{1}{C C_1^m},
\]
and, using \eqref{eq:K-2},
\[
 \mathcal{K}_{g_0}(\tilde u )(X,s) \ge \frac{1}{C}
\]
for some $C$ independent of $s$.  It follows that
\[
\tilde u(X, \tau)^m \ge \frac{1}{C} \int_0^{\tau} e^{\frac{m}{1-m}(s-\tau)}\,\ud s\ge \frac{1}{C} \frac{1-m}{m} (1-e^{-\frac{m}{1-m}}),
\]
if $\tau \ge 1$. Hence, the first conclusion is verified. The second one then follows from the definition of $\tilde u$. Therefore,  the proposition is proved.
\end{proof}

\section{The critical regime}
\label{s:creg}

In this section, we set $m=\frac{n-2\sigma}{n+2\sigma}$.

We may further normalize $\tilde u$ in \eqref{eq:main-rescaled} as
\[
 \tilde u/\|\tilde u\|_{L^{m+1}(M)},
\]
which turns out to be a solution of the normalized equation \eqref{eq:main} on $M\times (0,\infty)$.

On the other hand, by changing variables Lemma \ref{lem:existence} implies that \eqref{eq:main} and \eqref{eq:initial-data}  admits a unique positive solution $u$ satisfying  $u^m\in C^1([0,T^*); C^0(M))$, where $0<T^*\le \infty$ is taken to be the maximal existence time of the solution.

\subsection{Long time existence and concentration compactness}

Let $u$ be a positive  solution of \eqref{eq:main} and $g=u^{\frac{4}{n+2\sigma}} g_0$.  Set
\[
V(t)= \int_{\Sn} u(t)^{\frac{2n}{n+2\sigma}} \,\ud vol_{g_0}
\]
and
\[
M_q(t)= \int _{\Sn} | Q_{K_0}^g -a(t)|^q \,\ud vol_g= \int _{\Sn} | Q_{K_0}^g -a(t)|^q u^{\frac{2n}{n+2\sigma}} \,\ud vol_{g_0} , \quad  q\ge 1,
\]
where $Q_{K_0}^g=\mathcal{K}_{g}(1)$ as defined in \eqref{eq:q-k}.

\begin{lem}\label{lem:properties} Along the flow, we have
\begin{itemize}
\item[(i)]
\begin{align*}
\frac{\pa }{\pa t} u ^{\frac{2n}{n+2\sigma}}= \frac{2n}{n-2\sigma}  (Q_{K_0}^g -a(t))u ^{\frac{2n}{n+2\sigma}}
\end{align*}
and thus $V'(t)=0$;
\item[(ii)]
\begin{align*}
\frac{\pa }{\pa t}   (Q_{K_0}^g -a(t))=\frac{n+2\sigma}{n-2\sigma }  \mathcal{K}_g (Q_{K_0}^g-a) -(Q_{K_0}^g-a)^2 - a(Q_{K_0}^g-a) -a';
\end{align*}
\item[(iii)]
\[
\frac{\ud }{\ud t} J_{\frac{n-2\sigma}{n+2\sigma}}(u)= \frac{2(n+2\sigma)}{n-2\sigma}  V(t)^{-\frac{n+2\sigma}{n}} \cdot M_2(t) \ge 0,
\]
where $J_{\frac{n-2\sigma}{n+2\sigma}}$ is as defined in \eqref{eq:Jm}.
\end{itemize}
\end{lem}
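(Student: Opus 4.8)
The plan is to reduce all three identities to the single conformal covariance relation $\mathcal{K}_g(\phi)=u^{-m}\mathcal{K}_{g_0}(u\phi)$, with $m=\frac{n-2\sigma}{n+2\sigma}$ and $g=u^{\frac{4}{n+2\sigma}}g_0$, together with the elementary bookkeeping $\frac{2n}{n+2\sigma}=m+1$, $\frac{m+1}{m}=\frac{2n}{n-2\sigma}$, $\frac{1}{m}=\frac{n+2\sigma}{n-2\sigma}$, and $\ud vol_g=u^{m+1}\,\ud vol_{g_0}$. Taking $\phi\equiv1$ gives $Q_{K_0}^g=\mathcal{K}_g(1)=u^{-m}\mathcal{K}_{g_0}(u)$, so \eqref{eq:main} reads $\pa_t u^m=u^m(Q_{K_0}^g-a)$, equivalently $\pa_t u=\frac1m u\,(Q_{K_0}^g-a)$. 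Everything below is carried out on a compact subinterval $[\delta,T]\subset(0,T^*)$, where Lemma \ref{lem:existence} and \eqref{eq:riesz-bound-2} guarantee that $u(\cdot,t)$ is continuous, bounded, and (for $t>0$) bounded away from zero, and that $\mathcal{K}_{g_0}(u)(\cdot,t)$, hence $Q_{K_0}^g(\cdot,t)$ and $a(t)$, is differentiable in $t$ with bounded derivative; this is what legitimates differentiating under the integral signs, and is the only non-algebraic point in the proof.

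For (i): from $u^{m+1}=u\cdot u^{m}$ and $\pa_t u=\frac1m u^{1-m}\pa_t u^m$ the chain rule gives $\pa_t u^{m+1}=\frac{m+1}{m}u\,\pa_t u^m=\frac{m+1}{m}u^{m+1}(Q_{K_0}^g-a)$, which is the first displayed identity since $\frac{m+1}{m}=\frac{2n}{n-2\sigma}$. Integrating over $M$,
\[
V'(t)=\frac{2n}{n-2\sigma}\int_M(Q_{K_0}^g-a)\,\ud vol_g=\frac{2n}{n-2\sigma}\,a(t)\big(1-V(t)\big),
\]
using $\int_M Q_{K_0}^g\,\ud vol_g=a(t)$ and $\int_M\ud vol_g=V(t)$. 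Since $V(0)=1$ for the normalized flow, this linear ODE forces $V\equiv1$, hence $V'(t)=0$.

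For (ii): differentiating $Q_{K_0}^g=u^{-m}\mathcal{K}_{g_0}(u)$ and inserting $\pa_t u=\frac1m u(Q_{K_0}^g-a)$ together with $\mathcal{K}_{g_0}(u)=u^m Q_{K_0}^g$ gives
\[
\pa_t Q_{K_0}^g=-m u^{-m-1}(\pa_t u)\mathcal{K}_{g_0}(u)+u^{-m}\mathcal{K}_{g_0}(\pa_t u)=-(Q_{K_0}^g-a)Q_{K_0}^g+\tfrac1m\,u^{-m}\mathcal{K}_{g_0}\!\big(u(Q_{K_0}^g-a)\big).
\]
By the covariance relation with $\phi=Q_{K_0}^g-a$, the last term equals $\tfrac1m\mathcal{K}_g(Q_{K_0}^g-a)$; writing $Q_{K_0}^g=(Q_{K_0}^g-a)+a$ in the quadratic term, subtracting $a'(t)$, and using $\frac1m=\frac{n+2\sigma}{n-2\sigma}$ gives exactly (ii).

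For (iii): by (i) the denominator $\big(\int_M u^{m+1}\big)^{2/(m+1)}=V(t)^{(n+2\sigma)/n}$ of $J_{\frac{n-2\sigma}{n+2\sigma}}(u)$ is constant, so only the numerator is differentiated; by the symmetry \eqref{eq:K-1} the operator $\mathcal{K}_{g_0}$ is self-adjoint on $L^2(M,\ud vol_{g_0})$, so $\frac{\ud}{\ud t}\int_M u\mathcal{K}_{g_0}(u)\,\ud vol_{g_0}=2\int_M(\pa_t u)\mathcal{K}_{g_0}(u)\,\ud vol_{g_0}$. Substituting $\pa_t u=\frac1m u(Q_{K_0}^g-a)$ and $\mathcal{K}_{g_0}(u)=u^mQ_{K_0}^g$ and splitting $Q_{K_0}^g=(Q_{K_0}^g-a)+a$,
\[
\frac{\ud}{\ud t}J_{\frac{n-2\sigma}{n+2\sigma}}(u)=\frac2m\int_M(Q_{K_0}^g-a)Q_{K_0}^g\,\ud vol_g=\frac2m\Big(M_2(t)+a(t)\!\int_M(Q_{K_0}^g-a)\,\ud vol_g\Big)=\frac2m M_2(t),
\]
the last integral vanishing by (i); re-inserting the constant factor $V(t)^{-(n+2\sigma)/n}$ and using $\frac2m=\frac{2(n+2\sigma)}{n-2\sigma}$ gives the stated formula, and nonnegativity is immediate since $M_2(t)\ge0$. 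The only step requiring care beyond this algebra is the differentiability claim flagged in the first paragraph; once it is secured the argument is a direct computation.
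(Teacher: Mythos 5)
Your proof is correct and follows essentially the same computational route as the paper: derive $\pa_t u/u = \frac{n+2\sigma}{n-2\sigma}(Q_{K_0}^g - a)$ from \eqref{eq:main}, then differentiate $u^{m+1}$, $Q_{K_0}^g = u^{-m}\mathcal{K}_{g_0}(u)$, and $J_m(u)$ directly, using the conformal covariance $\mathcal{K}_g(\phi)=u^{-m}\mathcal{K}_{g_0}(u\phi)$ and $\ud vol_g = u^{m+1}\ud vol_{g_0}$. The only place you deviate is in making the conclusion $V'(t)=0$ explicit: with the paper's stated definition $a(t)=\int_M Q_{K_0}^g\,\ud vol_g$ (rather than the average), the identity you derive is $V'=\frac{2n}{n-2\sigma}\,a(t)(1-V(t))$, which vanishes only because the flow is started with $V(0)=1$; you close this by the ODE uniqueness argument, which is a reasonable and slightly more careful reading than the paper's one-line ``the first item follows,'' and is consistent with the paper's later ``without loss of generality $V(t)=1$.'' The same implicit normalization is also what makes the cancellation in your (and the paper's) computation of (iii) work. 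In short: same approach, with the dependence on the normalization $V\equiv 1$ spelled out rather than left tacit.
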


\begin{proof} By \eqref{eq:main}, we have
\be\label{eq:log-u}
\frac{\pa_t u}{u}= \frac{n+2\sigma}{n-2\sigma} (Q_{K_0}^g -a(t)).
\ee
The first item follows. Using the definition of $Q_{K_0}^g$ and \eqref{eq:log-u}, we have
\begin{align*}
\frac{\pa }{\pa t}   Q_{K_0}^g &= \frac{\pa }{\pa t}   (u^{-\frac{n-2\sigma }{n+2\sigma }} \mathcal{K}_{g_0} (u))\\&
= -\frac{n-2\sigma}{n+2\sigma}\frac{\pa_t u}{u}  Q_{K_0}^g  + u^{-\frac{n-2\sigma }{n+2\sigma }} \mathcal{K}_{g_0} (\pa_t u)\\&
= -(Q_{K_0}^g-a)Q_{K_0}^g +\frac{n+2\sigma}{n-2\sigma }  u^{-\frac{n-2\sigma }{n+2\sigma }} \mathcal{K}_{g_0} (u (Q_{K_0}^g-a))\\&
=  -(Q_{K_0}^g-a)Q_{K_0}^g +\frac{n+2\sigma}{n-2\sigma }  \mathcal{K}_g (Q_{K_0}^g-a)\\&
=-(Q_{K_0}^g-a)^2 - a(Q_{K_0}^g-a) +\frac{n+2\sigma}{n-2\sigma }  \mathcal{K}_g (Q_{K_0}^g-a).
\end{align*}
Finally,
 \begin{align*}
 \frac{\ud }{\ud t} J_{\frac{n-2\sigma}{n+2\sigma}}(u) &= \frac{2(n+2\sigma)}{n-2\sigma}  V(t)^{-\frac{n+2\sigma}{n}} \cdot \Big[\int_{\Sn} |\mathcal{K}_{g_0}(u)|^2 u^{\frac{4\sigma}{n+2\sigma}}  - a(t) u \mathcal{K}_{g_0} (u) \,\ud vol_{g_0}  \Big ] \\&
 =  \frac{2(n+2\sigma)}{n-2\sigma}  V(t)^{-\frac{n+2\sigma}{n}} \cdot  \int |\mathcal{K}_{g_0}(u)-  a(t) u^{\frac{n-2\sigma}{n+2\sigma}}  |^2 u^{\frac{4\sigma}{n+2\sigma}} \,\ud vol_{g_0}   \\&
 = \frac{2(n+2\sigma)}{n-2\sigma}  V(t)^{-\frac{n+2\sigma}{n}} \cdot M_2(t).
 \end{align*}
The lemma is proved.

\end{proof}

Without loss of generality, we assume from now on
\be\label{eq:unit-vol}
V(t)=1.
\ee

\begin{lem}\label{lem:normalized contant} We have
\[
 a'(t) =\frac{2(n+2\sigma)}{n-2\sigma}  M_2(t) \ge0
\]
and
\[
0<J_{\frac{n-2\sigma}{n+2\sigma}}(u_0)\le a(t)\le \bar J_{\frac{n-2\sigma}{n+2\sigma}} .
\]
Hence, $\lim_{t\to \infty} a(t)=:a_\infty$ exists.
\end{lem}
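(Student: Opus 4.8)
The plan is to derive each assertion from the identities collected in Lemma \ref{lem:properties}. The monotonicity $a'(t)\ge 0$ follows directly from part (iii) of that lemma together with the definition of $a(t)=\int_M Q_{K_0}^g\,\ud vol_g$ and the normalization $V(t)=1$: differentiating $a(t)$ and using part (i) of Lemma \ref{lem:properties} one gets
\[
a'(t)=\frac{\ud}{\ud t}\int_M Q_{K_0}^g\,\ud vol_g=\frac{\ud}{\ud t}\int_M u\,\mathcal{K}_{g_0}(u)\,\ud vol_{g_0}=2\int_M \pa_t u\,\mathcal{K}_{g_0}(u)\,\ud vol_{g_0},
\]
and after substituting $\pa_t u$ from \eqref{eq:main} and completing the square (exactly the computation already done in the proof of part (iii)) this equals $\frac{2(n+2\sigma)}{n-2\sigma}M_2(t)\ge 0$. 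Alternatively, since $V(t)\equiv 1$, $a(t)=J_{\frac{n-2\sigma}{n+2\sigma}}(u(t))$, so $a'(t)$ is literally the quantity computed in Lemma \ref{lem:properties}(iii).

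For the upper bound, the identity $a(t)=J_{\frac{n-2\sigma}{n+2\sigma}}(u(t))\le \bar J_{\frac{n-2\sigma}{n+2\sigma}}$ is immediate from the definition of $\bar J_m$ as the supremum of $J_m$ over $L^{m+1}(M)\setminus\{0\}$; here $\bar J_{\frac{n-2\sigma}{n+2\sigma}}<\infty$ by the Hardy--Littlewood--Sobolev inequality, as noted in Section \ref{s:SS}. The lower bound is then a consequence of monotonicity: $a(t)\ge a(0)=J_{\frac{n-2\sigma}{n+2\sigma}}(u_0)$, which is strictly positive because $u_0\ge 0$ is not identically zero and $K_0>0$, so $\int_M u_0\,\mathcal{K}_{g_0}(u_0)\,\ud vol_{g_0}>0$.

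Finally, $a(t)$ being nondecreasing and bounded above by $\bar J_{\frac{n-2\sigma}{n+2\sigma}}$, the limit $a_\infty:=\lim_{t\to\infty}a(t)$ exists and satisfies $0<J_{\frac{n-2\sigma}{n+2\sigma}}(u_0)\le a_\infty\le \bar J_{\frac{n-2\sigma}{n+2\sigma}}$. There is no real obstacle here: the statement is essentially a packaging of the monotonicity formula from Lemma \ref{lem:properties}(iii) with the variational characterization of $\bar J_m$; the only point requiring a word of care is that $a(t)$ is genuinely differentiable in $t$, which follows from the regularity $u^m\in C^1([0,\infty);C^0(M))$ and the fact that $\mathcal{K}_{g_0}$ maps $C^0(M)$ into a Hölder space by the Riesz potential estimate \eqref{eq:riesz-bound-2}.
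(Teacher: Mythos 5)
Your proposal is correct and follows essentially the same route as the paper: under the normalization $V(t)=1$ one has $a(t)=J_{\frac{n-2\sigma}{n+2\sigma}}(u(t))$, so the derivative formula is exactly item (iii) of Lemma \ref{lem:properties}, the upper bound comes from the definition of $\bar J_{\frac{n-2\sigma}{n+2\sigma}}$ as a supremum, and the lower bound and existence of the limit come from monotonicity. Your extra remarks (positivity of $J(u_0)$ from $K_0>0$, and differentiability of $a$ via the Riesz potential estimate) are correct minor elaborations of points the paper takes for granted.
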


\begin{proof} Since $V(t)=1$,
\[
a(t)= J_{\frac{n-2\sigma}{n+2\sigma}}(u) \le \bar J_{\frac{n-2\sigma}{n+2\sigma}}.
\]
By item (iii) of Lemma \ref{lem:properties},
\[
a'(t)= \frac{2(n+2\sigma)}{n-2\sigma}  M_2(t) \ge 0.
\]
Thus $a(t)\ge a(0)= J_{\frac{n-2\sigma}{n+2\sigma}}(u_0)$. The lemma is proved.
\end{proof}

\begin{lem}\label{lem:global-exist} We have $T^*=\infty$.

\end{lem}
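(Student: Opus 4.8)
The plan is to rule out finite-time blow-up by showing that $\|u(t)\|_{C^0(M)}$ stays bounded on any finite time interval, which by Lemma \ref{lem:existence} (adapted to the normalized flow \eqref{eq:main}) forces $T^*=\infty$. The starting data are: $V(t)=1$ is conserved (Lemma \ref{lem:properties}(i)), and $a(t)$ is increasing with $J_{\frac{n-2\sigma}{n+2\sigma}}(u_0)\le a(t)\le \bar J_{\frac{n-2\sigma}{n+2\sigma}}$ (Lemma \ref{lem:normalized contant}), so in particular $a$ is bounded on $[0,T^*)$. Writing the flow in integrated form,
\[
u(X,t)^m = u_0(X)^m + \int_0^t \big(\mathcal{K}_{g_0}(u)(X,s) - a(s)u(X,s)^m\big)\,\ud s,
\]
and using $a(s)\ge 0$, we get the one-sided inequality $u(X,t)^m \le u_0(X)^m + \int_0^t \mathcal{K}_{g_0}(u)(X,s)\,\ud s$. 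Equivalently, after multiplying through by an integrating factor, $u^m$ satisfies an inequality of exactly the type \eqref{eq:local-1} in local coordinates, with $V(y,s)=u(y,s)^{1-m}$ (note $1-m=\frac{4\sigma}{n+2\sigma}$) and a tail term $h$ controlled by Lemma \ref{lem:tail}.

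The key point, and the reason $m=\frac{n-2\sigma}{n+2\sigma}$ is the critical exponent, is that $V=u^{1-m}$ has exactly critical integrability: $\int_{\mathcal{B}_\rho(\bar X)} (u^{1-m})^{n/2\sigma}\,\ud vol_{g_0} = \int_{\mathcal{B}_\rho(\bar X)} u^{2n/(n+2\sigma)}\,\ud vol_{g_0}\le \w_t(\rho)$, which is the local mass modulus. Since $V(t)=1$ is conserved, these local masses are controlled; and because $u(X,t)$ is monotone increasing in $t$ (as in the proof of Lemma \ref{lem:existence}, using $\mathcal{K}_{g_0}(u_0)-a(0)u_0^m$ — wait, this sign is not automatic, so more care is needed: one should instead argue that $\w_t(\rho)\le \w_0(\rho) + $ (something controlled by $t$ and $\bar J$), or use absolute continuity of $\w_t$ in $t$). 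In any case, on a fixed finite interval $[0,T_0]$ with $T_0<T^*$, the modulus $\w_t(\rho)$ can be made uniformly small by taking $\rho$ small, \textbf{uniformly in $t\in[0,T_0]$}; this is the crucial smallness \eqref{eq:smallness-1} needed to apply Proposition \ref{prop:small_to_regular}. Then the regularity machinery — Lemma \ref{lem:tail} to handle the tail $h$, Proposition \ref{prop:small_to_regular} / Theorem \ref{thm:improve-int} to upgrade integrability from $L^{m+1}$ (equivalently $u\in L^{2n/(n+2\sigma)}$, which we have since $V=1$) through a finite bootstrap to $L^\infty$ — yields $\max_{[0,T_0]}\|u\|_{C^0(M)}\le C(T_0)$. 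Letting $T_0\uparrow T^*$, if $T^*<\infty$ we contradict the blow-up criterion of Lemma \ref{lem:existence}. Hence $T^*=\infty$.

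The main obstacle I anticipate is the uniform-in-$t$ control of the concentration modulus $\w_t(\rho)$ on a finite interval: unlike the subcritical case (Lemma \ref{lem:lp-l-infty}), where the smallness came for free from Hölder's inequality with a favorable power of $\rho$, here the local $L^{n/2\sigma}$ norm of $V$ equals the local mass exactly, with no room to spare, so one genuinely needs to propagate a no-concentration estimate forward in time. The argument should show that the total mass being fixed ($V=1$), combined with the integral equation and the boundedness of $a$, prevents mass from concentrating at a point in finite time — concretely, by estimating $\frac{\ud}{\ud t}\int_{\mathcal{B}_\rho(\bar X)} u^{2n/(n+2\sigma)}\,\ud vol_{g_0}$ using Lemma \ref{lem:properties}(i) and a cutoff, bounding $Q_{K_0}^g-a$ in $L^1_{loc}$ via Riesz estimates, and integrating in time. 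Once this uniform smallness is in hand, everything else is the already-established regularity bootstrap and the blow-up criterion, so the writing there can be brief.
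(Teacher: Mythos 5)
Your high-level plan matches the paper's: reduce to a uniform $C^0$ bound on $[0,T^*)$ via the Riesz-potential regularity machinery (Lemma \ref{lem:tail}, Proposition \ref{prop:small_to_regular}), then contradict the blow-up criterion of Lemma \ref{lem:existence}. You also correctly pinpoint the crux — the $L^{n/2\sigma}$ norm of $V=u^{1-m}$ over a small ball is exactly the local mass, so you need a uniform-in-$t$ no-concentration estimate — and you correctly flag that direct monotonicity of $u(t)$ under \eqref{eq:main} is not available because of the $-a(t)u^m$ term. But your proposed fallback mechanism (estimating $\frac{\ud}{\ud t}\int_{\mathcal{B}_\rho}u^{m+1}$ via Lemma \ref{lem:properties}(i) and integrating in time) has a genuine gap: that time derivative is $\frac{2n}{n-2\sigma}\int_{\mathcal{B}_\rho}(Q_{K_0}^g-a)u^{m+1}\,\ud vol_{g_0}$, whose absolute value is bounded by $CM_1(t)\le C M_2(t)^{1/2}$, and $\int_0^{T_0}M_2\,\ud t\le \bar J_{\frac{n-2\sigma}{n+2\sigma}}-J_{\frac{n-2\sigma}{n+2\sigma}}(u_0)$ is \emph{bounded} but not \emph{small}; so you get $\w_{T_0}(\rho)\le\w_0(\rho)+C$, with the constant independent of $\rho$, which does not yield the smallness needed for \eqref{eq:smallness-1}.

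The fix the paper uses is the integrating factor you almost wrote down: since $a\ge 0$ and the kernel is positive,
\[
\pa_t\bigl(e^{\int_0^t a(s)\,\ud s}\,u(t)^m\bigr)=e^{-\int_0^t a(s)\,\ud s}\,\mathcal{K}_{g_0}(u)(t)\ge 0,
\]
so $e^{\int_0^t a\,\ud s}u(t)^m$ is monotone increasing in $t$. If $T^*<\infty$, the exponential prefactor is uniformly comparable to $1$ on $[0,T^*]$ (because $a\le\bar J_{\frac{n-2\sigma}{n+2\sigma}}$), hence $u(t)^m$ increases up to a bounded factor and the monotone limit $u(T^*)$ exists a.e.; Fatou together with $V(t)\equiv 1$ gives $u(T^*)\in L^{m+1}(M)$. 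Now the no-concentration estimate is automatic and uniform: for every $t<T^*$, $\int_{\mathcal{B}_\rho(X)}u(t)^{m+1}\le C\int_{\mathcal{B}_\rho(X)}u(T^*)^{m+1}$, and the right-hand side tends to $0$ as $\rho\to 0$ uniformly in $X$ by absolute continuity of the integral of the single $L^1$ function $u(T^*)^{m+1}$. This replaces your missing step; the rest of your write-up (integrated equation, $a$ bounded, tail handled by Lemma \ref{lem:tail}, bootstrap via Proposition \ref{prop:small_to_regular} to $C^0$, contradiction with Lemma \ref{lem:existence}) is exactly the paper's conclusion.
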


\begin{proof} By \eqref{eq:main}, we have
\[
\pa_t (e^{\int_0^t a(s)\,\ud s} u(t)^m)= e^{-\int_0^t a(s)\,\ud s} \mathcal{K}_{g_0}(u)(t)\ge 0.
\]
If $T^*<\infty$,
\[
e^{\int_0^{T^*} a(s)\,\ud s/m} u(T^*) := \lim_{t\to T^*} e^{m^{-1} \int_0^t a(s)\,\ud s} u(t)
\]
 exists and belongs to $L^{m+1}(M)$. Integrating equation \eqref{eq:main}, we obtain
\be\label{eq:mild-form}
u(t)^m= u_0^m +\int_0^t e^{-\int_s^t a(\tau)\,\ud \tau} \mathcal{K}_{g_0}(u)(s)\,\ud s .
\ee
By Lemma \ref{lem:normalized contant},
\[
u(t)^m\le  u_0^m +\int_0^t e^{J_{\frac{n-2\sigma}{n+2\sigma}}(u_0) \cdot (s-t)} \mathcal{K}_{g_0}(u)(s)\,\ud s.
\]
It follows from Lemma \ref{lem:tail}, Proposition \ref{prop:small_to_regular} and  a bootstrap argument that
\[
\|u^m\|_{C(M\times [0,T^*])} <\infty.
\]
This contradicts to the definition of $T^*$. Hence, $T^*$ can not be a finite positive number. The proof is thus finished.

\end{proof}

Next, we compute the derivative of $M_q$. By Lemma \ref{lem:properties},
\begin{align}
&\frac{\ud }{\ud t }M_q(t) \nonumber \\&=\int_{M} [q |Q_{K_0}^g  -a|^{q-2} (Q_{K_0}^g -a)\pa_t (Q_{K_0}^g  -a) \nonumber \\& \qquad + \frac{2n}{n-2\sigma} |Q_{K_0}^g-a|^{q} (Q_{K_0}^g -a)] \,\ud vol_g  \nonumber \\&
= \int_{M}  \Big[ \frac{(n+2\sigma)q}{n-2\sigma } |Q_{K_0}^g  -a|^{q-2} (Q_{K_0}^g -a)  \mathcal{K}_g (Q_{K_0}^g-a) \nonumber  \\& \qquad +(\frac{2n}{n-2\sigma}-q)|Q_{K_0}^g-a|^{q} (Q_{K_0}^g  -a)  - aq|Q_{K_0}^g-a|^{q}   \label{eq:M-q}  \\&  \qquad  -q a'|Q_{K_0}^g  -a|^{q-2} (Q_{K_0}^g -a)\Big]\,\ud vol_g .  \nonumber
\end{align}
Denote the first term as
\[
N_q:= \int_{M}  \frac{(n+2\sigma)q}{n-2\sigma } |Q_{K_0}^g  -a|^{q-2} (Q_{K_0}^g -a)  \mathcal{K}_g (Q_{K_0}^g-a)\,\ud vol_g,
\]
which is a `bad' term to us because of \eqref{eq:strock-f}.  Using Hardy-Littlewood-Sobolev inequality,
\be \label{eq:riesz-curv}
\begin{split}
|N_q|&=\frac{(n+2\sigma)q}{n-2\sigma }  |\int_{M} |Q_{K_0}^g  -a|^{q-2} (Q_{K_0}^g -a)  \mathcal{K}_g (Q_{K_0}^g-a)   \,\ud vol_g|
\\ &=
 \frac{(n+2\sigma)q}{n-2\sigma }  |\int_{M} |Q_{K_0}^g  -a|^{q-2} u(Q_{K_0}^g -a)  \mathcal{K}_{g_0} (u(Q_{K_0}^g-a))   \,\ud vol_{g_0}|\\&
 \le \frac{(n+2\sigma)q}{n-2\sigma }   C \||Q_{K_0}^g  -a|^{q-2} u(Q_{K_0}^g -a)\|_{L^{\frac{2n}{n+2\sigma}}}\| u(Q_{K_0}^g -a)\|_{L^{\frac{2n}{n+2\sigma}}}\\&
= \frac{(n+2\sigma)q}{n-2\sigma }  C M_{\frac{2n(q-1)}{n+2\sigma}}^{\frac{n+2\sigma}{2n}} M_{\frac{2n}{n+2\sigma}} ^{\frac{n+2\sigma}{2n}}.
\end{split}
\ee
Furthermore, for $q>2$ and $\nu\ge \frac{2n(q-1)}{n+2\sigma}$, by the H\"older inequality  with using $V(t)=1$,
\be \label{eq:riesz-curv-a}
\begin{split}
|N_q| &\le C M_{\frac{2n(q-1)}{n+2\sigma}}^{\frac{n+2\sigma}{2n}} M_{\frac{2n}{n+2\sigma}} ^{\frac{n+2\sigma}{2n}} \\&
\le C M_{\nu}^{\frac{q-1}{\nu}}M_2^{\frac{1}{2}}  \\&
\le  \va M_\nu+\frac{C}{\va^{\frac{q-1}{\nu-q+1}}} M_2^{\frac{\nu}{2(\nu-q+1)}},
\end{split}
\ee
where $\va>0$ can be very small, and the Young inequality is used in the last inequality. As for the second term, since $Q_{K_0}^g  >0$, we have  $(Q_{K_0}^g  -a) \ge -\bar J_{\frac{n-2\sigma}{n+2\sigma}}$ and thus
\be \label{eq:good-term}
|Q_{K_0}^g-a|^{q} (Q_{K_0}^g  -a) \ge |Q_{K_0}^g-a|^{q+1}- \bar J_{\frac{n-2\sigma}{n+2\sigma}} |Q_{K_0}^g-a|^{q}.
\ee
So it is a `good' term to us. As for the last term, we have the estimate, using Lemma \ref{lem:normalized contant},
\be \label{eq:last-term}
 \left|\int_M q a'|Q_{K_0}^g  -a|^{q-2} (Q_{K_0}^g -a) \,\ud vol_g \right| \le  C M_{2} M_{q-1}.
\ee

\begin{prop}\label{prop:cc-0} We have
\[
M_q(t)\to 0 \quad \mbox{as }t\to \infty, \quad \mbox{if } 1\le q < \frac{2n}{n-2\sigma}+\frac{n+2\sigma}{n-2\sigma}.
\]

\end{prop}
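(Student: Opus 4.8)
\textbf{Proof proposal for Proposition \ref{prop:cc-0}.}

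The plan is to run an energy/Gronwall argument on the quantities $M_q(t)$, using the monotonicity of $J_{\frac{n-2\sigma}{n+2\sigma}}(u)$ established in Lemma \ref{lem:properties}(iii) as the ultimate source of decay. First I would record the consequence of that monotonicity together with the boundedness of $a(t)$ from Lemma \ref{lem:normalized contant}: since $a'(t)=\frac{2(n+2\sigma)}{n-2\sigma}M_2(t)\ge 0$ and $a(t)$ is bounded, we get $\int_0^\infty M_2(t)\,\ud t<\infty$; in particular $M_2(t)\to 0$ along a sequence $t_j\to\infty$, and $M_2\in L^1(0,\infty)$. This handles the endpoint case $q=2$ (hence, after interpolating down with $V(t)=1$, also every $1\le q\le 2$) once we upgrade the sequential statement to a full limit, which follows from a differential inequality for $M_2$ itself of the form $M_2'(t)\le C M_2(t)+ (\text{lower order})$ obtained by specializing \eqref{eq:M-q}; combined with $M_2\in L^1$ this forces $M_2(t)\to 0$.

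Next I would treat the supercritical range $2<q<\frac{2n}{n-2\sigma}+\frac{n+2\sigma}{n-2\sigma}=\frac{3n+2\sigma}{n-2\sigma}$ by an induction/bootstrap in $q$, feeding the already-known decay of lower moments into the evolution equation \eqref{eq:M-q}. The key structural facts are already assembled in the excerpt: the `bad' term $N_q$ obeys the estimate \eqref{eq:riesz-curv-a}, namely $|N_q|\le \va M_\nu+C_\va M_2^{\nu/(2(\nu-q+1))}$ for any $\nu\ge \frac{2n(q-1)}{n+2\sigma}$ and any small $\va>0$; the `good' term satisfies \eqref{eq:good-term}, contributing $-\int_M |Q_{K_0}^g-a|^{q+1}\,\ud vol_g$ up to the harmless term $\bar J_{\frac{n-2\sigma}{n+2\sigma}}M_q$; and the remaining terms are controlled by $M_q$, $M_{q-1}$ and \eqref{eq:last-term}. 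Choosing $\nu=q+1$ (which satisfies $\nu\ge \frac{2n(q-1)}{n+2\sigma}$ precisely when $q<\frac{3n+2\sigma}{n-2\sigma}$ — this is exactly where the upper bound on $q$ enters) and taking $\va$ small, the $\va M_{q+1}$ term is absorbed by the good term $-M_{q+1}$, and one is left with
\[
\frac{\ud}{\ud t}M_q(t)\le -\tfrac12 M_{q+1}(t) + C\big(M_q(t)+M_{q-1}(t)+M_2(t)M_{q-1}(t)+ M_2(t)^{\beta}\big)
\]
for some exponent $\beta=\beta(q,n,\sigma)>0$. Using $V(t)=1$ and Jensen/Hölder, $M_q\le M_{q+1}^{q/(q+1)}$, so $M_q\le \tfrac14 M_{q+1}+C$ is false in general but $M_q^{(q+1)/q}\le M_{q+1}$ gives $-\tfrac12 M_{q+1}\le -\tfrac12 M_q^{(q+1)/q}$; hence $\frac{\ud}{\ud t}M_q\le -\tfrac12 M_q^{1+1/q}+C\,r(t)$ where $r(t)\to 0$ by the inductive hypothesis (all of $M_{q-1},M_2,M_2 M_{q-1},M_2^\beta\to 0$), and also $\int_0^\infty r(t)\,\ud t<\infty$ can be arranged at least for the pieces involving $M_2$. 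A standard ODE comparison lemma — if $y'\le -c y^{1+\gamma}+r(t)$ with $y\ge 0$, $\gamma>0$, $r(t)\to 0$, then $y(t)\to 0$ — then yields $M_q(t)\to 0$, closing the induction.

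The main obstacle I anticipate is bookkeeping the chain of moment interpolations so that the induction actually closes: one must verify that at each stage the exponents appearing on the right-hand side of \eqref{eq:M-q}–\eqref{eq:last-term} (the moments $M_{2n(q-1)/(n+2\sigma)}$, $M_{q-1}$, and the Young-inequality product $M_2^{\nu/(2(\nu-q+1))}$) are all either $M_2$, or strictly lower-order moments already controlled, or absorbable into $M_{q+1}$ — and that the admissible choice $\nu=q+1$ stays legal exactly up to $q<\frac{3n+2\sigma}{n-2\sigma}$. A secondary technical point is justifying the differentiation under the integral sign and the manipulations in \eqref{eq:M-q} for non-integer $q$ and near the zero set of $Q_{K_0}^g-a$; this is routine (approximate $|s|^{q-2}s$ by a smooth odd function, or note $q-2>0$ so the integrand is $C^1$ when $q\ge 2$) but should be noted. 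Finally, upgrading "$M_q(t)\to 0$ along a subsequence plus an ODE differential inequality" to "$M_q(t)\to 0$" is exactly the role of the comparison lemma above, and one should state it once and reuse it.
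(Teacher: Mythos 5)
Your proposal has a genuine gap in the range $2<q\le \frac{2n}{n-2\sigma}$, which the paper handles by an argument that is structurally different from your ODE-comparison scheme.

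The issue is the sign of the coefficient in front of the $M_{q+1}$-type term. From \eqref{eq:M-q}, the only term that can produce $-M_{q+1}$ on the right-hand side is
\[
\left(\frac{2n}{n-2\sigma}-q\right)\int_M |Q_{K_0}^g-a|^q (Q_{K_0}^g-a)\,\ud vol_g.
\]
Via \eqref{eq:good-term} this is bounded below by $(\frac{2n}{n-2\sigma}-q)(M_{q+1}-\bar J M_q)$; it becomes a genuinely negative multiple of $M_{q+1}$ only when $q>\frac{2n}{n-2\sigma}$. For $2<q\le\frac{2n}{n-2\sigma}$ the prefactor is nonnegative, so there is no $-M_{q+1}$ into which your $\varepsilon M_{q+1}$ (coming from $|N_q|\le\varepsilon M_{q+1}+C_\varepsilon M_2^{\beta}$ with $\nu=q+1$) can be absorbed. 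Your claimed inequality $\frac{\ud}{\ud t}M_q\le -\frac12 M_{q+1}+C\,r(t)$ therefore fails precisely on the intermediate range, and the ODE comparison lemma does not apply there. The paper treats that range (its Step 2) by a different mechanism: it takes $\nu=q$, so that $|N_q|\le M_q+CM_2$, and keeps the two-sided inequality \eqref{eq:prop-cc-1} with $(\frac{2n}{n-2\sigma}-q)M_{q+1}$ appearing on both sides. The lower inequality is integrated to obtain $M_{q+1}\in L^1([1,\infty))$ from $M_q\in L^1$ (bootstrapping up from $M_3\in L^1$), and then both sides show $\frac{\ud}{\ud t}M_q\in L^1$, which, combined with $M_q\in L^1$, forces $M_q(t)\to 0$. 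That is not a Gronwall/absorption argument but an ``$L^1$-derivative plus $L^1$-function implies zero limit'' argument.

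The same remark applies to your treatment of $q=2$. You write ``$M_2'\le CM_2+(\text{lower order})$,'' but the term you are sweeping into ``lower order'' is $\frac{4\sigma}{n-2\sigma}M_3$, which is not of lower order than $M_2$ (by Jensen with $V(t)=1$, $M_3\ge M_2^{3/2}$, which can exceed $M_2$). The paper needs the lower inequality $\frac{4\sigma}{n-2\sigma}M_3-CM_2\le \frac{\ud}{\ud t}M_2$ to conclude first that $M_3\in L^1([1,\infty))$, then that $\frac{\ud}{\ud t}M_2\in L^1$, and only then that $M_2\to 0$. Your proposal for $q>\frac{2n}{n-2\sigma}$ (choose $\nu=q+1$, absorb, invoke an ODE comparison lemma) is essentially sound and parallels the paper's Step 3, where the prefactor is negative and absorption is available; but as written the proposal does not close the argument on the intermediate range $2<q\le\frac{2n}{n-2\sigma}$, and that range is exactly where the paper resorts to the integrability bootstrap.
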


\begin{proof}

\textit{Step 1.} We consider  $q=2$.

By \eqref{eq:riesz-curv} and the H\"older inequality, we have
\begin{align*}
|N_2|\le C  M_2.
\end{align*}
Noting that
\[
\int_{M} a' (Q_{K_0}^g -a)  \,\ud vol_g=0,
\]
by \eqref{eq:M-q} and \eqref{eq:good-term} we obtain
\[
\frac{4\sigma }{n-2\sigma}M_3  -C M_2\le \frac{\ud }{\ud t }M_2(t) \le \frac{4\sigma }{n-2\sigma}M_3  +C M_2,
\]
which implies that
\[
\int_1^\infty M_3 \,\ud t \le \frac {n-2\sigma} {4\sigma }\left( M_2(1)+C\int_1^\infty M_2\,\ud t\right )
\]
and
\[
\|\frac{\ud }{\ud t }M_2(t)\|_{L^1( [1,\infty))}\le C \int_{1}^\infty (M_2+M_3)\,\ud t<\infty.
\] Therefore,   $\lim_{t\to \infty} M_2=0$.

\textit{Step 2.} We consider $2< q\le \frac{2n}{n-2\sigma}$, which implies $\frac{2n(q-1)}{n+2\sigma}\le q$.

By taking $\nu=q$ and $\va=1$ in \eqref{eq:riesz-curv-a}, we obtain
\[
|N_q|\le M_q +  C M_2 ^{\frac{q}{2}} \le M_q +  C M_2.
\]
It follows from \eqref{eq:M-q},  \eqref{eq:good-term} and \eqref{eq:last-term} that
\be \label{eq:prop-cc-1}
\begin{split}
- C( M_q +  & M_{2} (1+M_{q-1} ))+(\frac{2n}{n-2\sigma}-q) M_{q+1} \\&  \le  \frac{\ud }{\ud t }M_q(t) \le (\frac{2n}{n-2\sigma}-q) M_{q+1} + C( M_q +   M_{2} (1+M_{q-1} )) .
\end{split}
\ee

If, in addition, $q\le 3 $ , then $M_q\le M_3+M_2 \in L^1([1,\infty))$ and $M_{q-1}\le M_2^{\frac{q-1}{2}}\le C$. Using the left part of the above inequality first, we have
\[
M_{q+1}\in L^1([1,\infty)), \quad q\le 3\mbox{ and }q<\frac{2n}{n-2\sigma}.
\]
Hence, both the lower and upper bound of $\frac{\ud }{\ud t}M_q$ in \eqref{eq:prop-cc-1} belong to $L^1([1,\infty))$, so does it.
Repeating this process, we will conclude that
\begin{align*}
\lim_{t\to \infty}M_{q}(t) = 0& \quad \mbox{for all } 2\le q\le \frac{2n}{n-2\sigma}, \\
M_{q+1}\in L^1([1,\infty))& \quad \mbox{for all } 2\le q< \frac{2n}{n-2\sigma}.
\end{align*}

\textit{Step 3.} We consider  $\frac{2n}{n-2\sigma}<q< \frac{2n}{n-2\sigma} +\frac{n+2\sigma}{n-2\sigma}$, which implies $ \frac{2n(q-1)}{n+2\sigma} \le  q+1$.

By taking $\nu= \frac{2n(q-1)}{n+2\sigma} $  and $\va=\frac12( q-\frac{2n}{n-2})$ in \eqref{eq:riesz-curv-a}, in view of that
\[
\frac{\nu}{2(\nu-q+1)}= \frac{n(q-1)}{2n(q-1)- (q-1)(n+2\sigma)}= \frac{n}{n-2\sigma}>1,
\]
then we have
\begin{align*}
|N_q|\le \va M_\nu+ C M_2^{\frac{n}{n-2\sigma}} & \le \va (M_{q+1} +M_2) + CM_2^{\frac{n}{n-2\sigma}}  \\& \le \va M_{q+1} +C M_2\\&
= \frac12( q-\frac{2n}{n-2}) M_{q+1} + CM_2.
\end{align*}
It follows from \eqref{eq:M-q},  \eqref{eq:good-term} and \eqref{eq:last-term}  that
\be \label{eq:prop-cc-2}
\begin{split}
- C( M_q +  & M_{2} (1+M_{q-1} )) \\&  \le  \frac{\ud }{\ud t }M_q(t) +\frac12(q-\frac{2n}{n-2\sigma}) M_{q+1} \le  C( M_q +   M_{2} (1+M_{q-1} )) .
\end{split}
\ee
Arguing as in Step 2,  we will again conclude that $M_q(t)\to 0$ as $t\to \infty$.  The proposition is proved.
\end{proof}

Let $D J_{\frac{n-2\sigma}{n+2\sigma}}(f, \cdot): L^{\frac{2n}{n-2\sigma}}(M)\to \R$ be the Frech\'et differential of the functional $J_{\frac{n-2\sigma}{n+2\sigma}}$ at $f\in L^{\frac{2n}{n+2\sigma}}(M)$.

\begin{cor}\label{cor:PS} Along the flow,
\[
D J_{\frac{n-2\sigma}{n+2\sigma}}(u, \cdot ) \to 0\quad \mbox{as }t\to \infty.
\]
Hence, the flow is a Palais-Smale flow line.
\end{cor}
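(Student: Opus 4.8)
The plan is to compute $DJ_{\frac{n-2\sigma}{n+2\sigma}}(u,\phi)$ explicitly and show that the right-hand side is controlled by $M_2(t)^{1/2}$, which tends to $0$ by Proposition \ref{prop:cc-0}. First I would recall that for $f\in L^{\frac{2n}{n+2\sigma}}(M)$ with $\|f\|_{L^{\frac{2n}{n+2\sigma}}}=1$ (which holds for $u$ along the flow, since $V(t)=1$), the Fréchet differential of $J_{\frac{n-2\sigma}{n+2\sigma}}$ is
\[
DJ_{\frac{n-2\sigma}{n+2\sigma}}(f,\phi) = 2\int_M \phi\, \mathcal{K}_{g_0}(f)\,\ud vol_{g_0} - 2\Big(\int_M f\,\mathcal{K}_{g_0}(f)\,\ud vol_{g_0}\Big)\int_M |f|^{\frac{n-2\sigma}{n+2\sigma}-1} f\, \phi\,\ud vol_{g_0},
\]
which at $f=u$ becomes $2\int_M \phi\big(\mathcal{K}_{g_0}(u) - a(t)\, u^{\frac{n-2\sigma}{n+2\sigma}}\big)\,\ud vol_{g_0}$ by the normalization $a(t)=J_{\frac{n-2\sigma}{n+2\sigma}}(u)$ from Lemma \ref{lem:normalized contant}.

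Next I would estimate the operator norm. Writing $\mathcal{K}_{g_0}(u) - a\,u^{\frac{n-2\sigma}{n+2\sigma}} = u^{\frac{n-2\sigma}{n+2\sigma}}(Q_{K_0}^g - a)$ (using $Q_{K_0}^g = u^{-\frac{n-2\sigma}{n+2\sigma}}\mathcal{K}_{g_0}(u)$), Hölder's inequality with exponents $\frac{2n}{n+2\sigma}$ and $\frac{2n}{n-2\sigma}$ gives
\[
\big|DJ_{\frac{n-2\sigma}{n+2\sigma}}(u,\phi)\big| \le 2\,\|\phi\|_{L^{\frac{2n}{n-2\sigma}}(M)}\,\big\|u^{\frac{n-2\sigma}{n+2\sigma}}(Q_{K_0}^g-a)\big\|_{L^{\frac{2n}{n+2\sigma}}(M)}.
\]
Now $\big\|u^{\frac{n-2\sigma}{n+2\sigma}}(Q_{K_0}^g-a)\big\|_{L^{\frac{2n}{n+2\sigma}}(M)}^{\frac{2n}{n+2\sigma}} = \int_M |Q_{K_0}^g - a|^{\frac{2n}{n+2\sigma}} u^{\frac{2n}{n+2\sigma}}\,\ud vol_{g_0} = M_{\frac{2n}{n+2\sigma}}(t)$, so that the operator norm of $DJ_{\frac{n-2\sigma}{n+2\sigma}}(u,\cdot)$ is bounded by $2\, M_{\frac{2n}{n+2\sigma}}(t)^{\frac{n+2\sigma}{2n}}$. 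Since $\frac{2n}{n+2\sigma} < 2 < \frac{2n}{n-2\sigma} + \frac{n+2\sigma}{n-2\sigma}$, Proposition \ref{prop:cc-0} gives $M_{\frac{2n}{n+2\sigma}}(t)\to 0$ as $t\to\infty$ (alternatively interpolate: $M_{\frac{2n}{n+2\sigma}} \le M_2^{n/(n+2\sigma)}$ using $V(t)=1$ and Hölder, and invoke Step 1 of Proposition \ref{prop:cc-0}). Hence $DJ_{\frac{n-2\sigma}{n+2\sigma}}(u,\cdot)\to 0$, and the flow line is Palais-Smale because $J_{\frac{n-2\sigma}{n+2\sigma}}(u)=a(t)$ stays bounded in $[J_{\frac{n-2\sigma}{n+2\sigma}}(u_0),\bar J_{\frac{n-2\sigma}{n+2\sigma}}]$ by Lemma \ref{lem:normalized contant}.

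I do not expect a genuine obstacle here; the one point requiring a little care is justifying the formula for the Fréchet differential on $L^{\frac{2n}{n+2\sigma}}(M)$ — namely that $J_{\frac{n-2\sigma}{n+2\sigma}}$ is indeed Fréchet differentiable there, which follows from the boundedness of $\mathcal{K}_{g_0}: L^{\frac{2n}{n+2\sigma}}(M)\to L^{\frac{2n}{n-2\sigma}}(M)$ (Hardy-Littlewood-Sobolev) together with the fact that, along the flow, $u$ stays bounded away from $0$ and $\infty$ in $L^\infty$ on $[1,\infty)$ (a consequence of the estimates developed in Section \ref{s:creg} via Lemma \ref{lem:tail} and Proposition \ref{prop:small_to_regular}), so the nonlinear term $f\mapsto |f|^{\frac{n-2\sigma}{n+2\sigma}-1}f$ is differentiable in the relevant sense near $u$. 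The remaining steps are routine Hölder and the already-proved Proposition \ref{prop:cc-0}.
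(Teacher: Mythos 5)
Your formula for the Fréchet differential is right, but the Hölder pairing and the subsequent norm computation both go wrong, and the fix you hint at (uniform $L^\infty$ bounds on $u$) isn't available at this level of generality.

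First, the algebra error: with $m=\tfrac{n-2\sigma}{n+2\sigma}$,
\[
\bigl\|u^{m}(Q_{K_0}^g-a)\bigr\|_{L^{\frac{2n}{n+2\sigma}}}^{\frac{2n}{n+2\sigma}}
=\int_M u^{\,m\cdot\frac{2n}{n+2\sigma}}\,|Q_{K_0}^g-a|^{\frac{2n}{n+2\sigma}}\,\ud vol_{g_0}
=\int_M u^{\frac{2n(n-2\sigma)}{(n+2\sigma)^2}}\,|Q_{K_0}^g-a|^{\frac{2n}{n+2\sigma}}\,\ud vol_{g_0},
\]
and the exponent on $u$ is $\tfrac{2n(n-2\sigma)}{(n+2\sigma)^2}$, not $\tfrac{2n}{n+2\sigma}$; this integral is therefore \emph{not} $M_{\frac{2n}{n+2\sigma}}(t)$. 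Second, you have paired $\varphi$ in $L^{\frac{2n}{n-2\sigma}}(M)$; but $J_{\frac{n-2\sigma}{n+2\sigma}}$ lives on $L^{\frac{2n}{n+2\sigma}}(M)$, so the operator norm of $DJ(u,\cdot)$ should be taken against $\|\varphi\|_{L^{\frac{2n}{n+2\sigma}}}$. That is exactly the pairing that makes the algebra close: placing $u^{m}(Q_{K_0}^g-a)$ in $L^{\frac{2n}{n-2\sigma}}(M)$ gives
\[
\bigl\|u^{m}(Q_{K_0}^g-a)\bigr\|_{L^{\frac{2n}{n-2\sigma}}}^{\frac{2n}{n-2\sigma}}
=\int_M u^{\frac{2n}{n+2\sigma}}\,|Q_{K_0}^g-a|^{\frac{2n}{n-2\sigma}}\,\ud vol_{g_0}
= M_{\frac{2n}{n-2\sigma}}(t),
\]
because $m\cdot\tfrac{2n}{n-2\sigma}=\tfrac{2n}{n+2\sigma}$. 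Hence $|DJ(u,\varphi)|\le 2\,M_{\frac{2n}{n-2\sigma}}(t)^{\frac{n-2\sigma}{2n}}\,\|\varphi\|_{L^{\frac{2n}{n+2\sigma}}}$, and since $\tfrac{2n}{n-2\sigma}<\tfrac{2n}{n-2\sigma}+\tfrac{n+2\sigma}{n-2\sigma}$, Proposition~\ref{prop:cc-0} applies.

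Third, the "one point requiring a little care" you raise — that $u$ is bounded away from $0$ and $\infty$ on $[1,\infty)$ — is \emph{not} a consequence of the estimates in Section~\ref{s:creg} in the general setting of Theorem~\ref{thm:cc}; such uniform two-sided bounds are established only later, via the moving-spheres differential Harnack inequality (Theorem~\ref{thm:harnack}), and only on the sphere (and, as remarked, on certain locally conformally flat manifolds). Since Corollary~\ref{cor:PS} is stated in the general Theorem~\ref{thm:cc} setting, you cannot invoke those bounds; the proof must go through $M_q$ alone, which is exactly what the correct Hölder pairing achieves. Once you fix the pairing and the exponent arithmetic, the argument matches the paper's.
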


\begin{proof} For any $\varphi\in L^{\frac{2n}{n+2\sigma}}(M)$, we have
\begin{align*}
D J_{\frac{n-2\sigma}{n+2\sigma}}(u, \varphi)&= 2 \int_{M} (\mathcal{K}_{g_0}(u)-a(t) u^{\frac{n-2\sigma}{n+2\sigma}}) \varphi )\,\ud vol_{g_0}\\&
= 2  \int_{M} (Q_{K_0}^g -a) u^{\frac{n-2\sigma}{n+2\sigma}} \varphi \,\ud vol_{g_0}
\le 2 M_{\frac{2n}{n-2\sigma}}^{\frac{n-2\sigma}{2n}} \|\varphi \|_{L^{\frac{2n}{n+2\sigma}}(M)}.
\end{align*}
The corollary follows from Proposition \ref{prop:cc-0}.
\end{proof}

\begin{proof}[Proof of Theorem \ref{thm:cc}]
 It follows from Lemma \ref{lem:normalized contant} and Proposition \ref{prop:cc-0}.
\end{proof}

\subsection{Global bound via the moving spheres method}

\begin{thm} \label{thm:harnack} Suppose that $(M,g_0)$ is the standard sphere, $\mathcal{K}_{g_0}= (P_{\sigma}^{g_0})^{-1}$ in \eqref{eq:p-inverse}. If $u$ is positive solution of \eqref{eq:main} on $\Sn\times (0,\infty)$ and $u(0)^m\in C^1(\Sn)$ is not identical to zero,  then $u\in C^1(\Sn \times (0,\infty))$ and the differential Harnack inequality holds
\[
|\nabla \ln u| \le  C \quad \mbox{on }\Sn\times [1, \infty),
\]
and thus
\[
\frac{1}{C}\le u\le C  \quad \mbox{on }\Sn\times [1, \infty),
\]
where $C>0$ depends on $u(t)$ with $t\in [1/2, 1]$.
\end{thm}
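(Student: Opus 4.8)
The plan is to establish the differential Harnack inequality $|\nabla \ln u| \le C$ on $\Sn \times [1,\infty)$ by a moving-spheres (or moving-planes) argument applied pointwise in time, exploiting the structure of the integral equation $\mathcal{K}_{g_0}(u) = u^m \cdot (\text{something bounded})$. More precisely, from the mild formulation \eqref{eq:mild-form} and the bounds $0 < J_{\frac{n-2\sigma}{n+2\sigma}}(u_0) \le a(t) \le \bar J_{\frac{n-2\sigma}{n+2\sigma}}$ of Lemma \ref{lem:normalized contant}, together with the already-proven long-time existence (Lemma \ref{lem:global-exist}), the first order of business is to upgrade regularity: I would first show $u$ stays bounded above on $\Sn \times [1,\infty)$ in some $L^p$ and then in $C^0$. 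The key technical input here is Lemma \ref{lem:tail} combined with Theorem \ref{thm:improve-int} / Proposition \ref{prop:small_to_regular}: once $V(t) = 1$ is fixed, the ``potential'' $V = u^{1-m}$ appearing in the local integral inequality lies in $L^{\frac{n}{2\sigma}}$ with a uniform-in-$t$ smallness on small balls (this is the $m = \frac{n-2\sigma}{n+2\sigma}$ endpoint of \eqref{eq:sub-small}, where one must use the concentration modulus $\w_t(\rho)$), so a bootstrap upgrades the $L^{m+1}$ bound to a uniform $C^0$ bound and, via \eqref{eq:riesz-bound-2}, to a uniform $C^\alpha$ bound on $\mathcal{K}_{g_0}(u)$, hence on $u^m$ and $u$; differentiating \eqref{eq:mild-form} in $X$ using \eqref{eq:h-grad} gives $u \in C^1(\Sn \times (0,\infty))$.

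Having uniform two-sided bounds would already give $\frac1C \le u \le C$ once we have the lower bound, but the statement asks for the sharper \emph{gradient} estimate, and that is where the moving-spheres method enters — this is the technical heart, borrowed (as the authors note) from Chen-Li-Ou \cite{CLO} and Li \cite{Li04} and ultimately from Ye \cite{Ye}. The idea: fix $t \in [1,\infty)$. The function $u(\cdot,t)$ satisfies on $\Sn$ the integral equation
\[
u(\xi,t)^m = u(\xi, 0)^m + \int_0^t e^{-\int_s^t a(\tau)\,\ud\tau}\, c_{n,\sigma}\!\int_{\Sn} \frac{u(\zeta,s)}{|\xi-\zeta|^{n-2\sigma}}\,\ud vol_{g_0}(\zeta)\,\ud s,
\]
so, after passing to $\R^n$ via stereographic projection and writing $v = |J_F|^{\frac{n-2\sigma}{2n}}(u\circ F)$, one gets an equation of the form $v^m = (\text{lower order, from }t=0) + (\text{positive Riesz potential of }v\text{ averaged in }s)$ on $\R^n$. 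The moving-spheres technique compares $v$ with its Kelvin transforms $v_{x_0,\lambda}$; the kernel $|\xi - \zeta|^{-(n-2\sigma)}$ and the weight $|J_F|$ transform correctly, and the $t=0$ term (which does \emph{not} have the right homogeneity) is controlled because it is bounded and the Riesz-potential term dominates for $\lambda$ near a critical value — this is exactly the mechanism by which Ye's differential Harnack works in the parabolic Yamabe flow, adapted to the integral setting. From the moving-spheres inequality one extracts, in standard fashion, that $v(x) / v(y) \le C$ for $|x-y|$ comparable, which on $\Sn$ translates into a uniform bound on oscillation of $\ln u$ over unit-scale balls, i.e. $|\nabla \ln u| \le C$ after combining with the $C^1$ regularity already in hand.

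Finally, the two-sided bound $\frac1C \le u \le C$ follows: the upper bound is the bootstrap estimate above, and the lower bound comes from integrating $|\nabla \ln u| \le C$ against the fixed normalization $\int_{\Sn} u^{\frac{2n}{n+2\sigma}}\,\ud vol_{g_0} = V(t) = 1$ — a Harnack-type chain gives $\sup u \le C' \inf u$, and combined with the volume normalization this pins both $\sup u$ and $\inf u$ between positive constants independent of $t$. The main obstacle I anticipate is making the moving-spheres argument robust against the inhomogeneous term $u(\cdot,0)^m$ in the integral identity: unlike the clean scale-invariant equation $\Lda v^{\frac{n-2\sigma}{n+2\sigma}} = c_{n,\sigma}\int |x-y|^{-(n-2\sigma)} v$ classified in \cite{CLO}, here one has an extra bounded term plus a time-integral with the exponential factor $e^{-\int_s^t a}$, so the comparison functions must be chosen with $\lambda$ in a range where the potential term's growth strictly beats this perturbation; quantifying this requires the uniform positivity $a(t) \ge J_{\frac{n-2\sigma}{n+2\sigma}}(u_0) > 0$ and the uniform $C^0$ bound on $u$, and the constant $C$ in the conclusion will consequently depend on $u(t)$ for $t \in [1/2,1]$, as stated. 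A secondary technical point is the endpoint smallness in \eqref{eq:sub-small} at $m = \frac{n-2\sigma}{n+2\sigma}$, which genuinely needs the non-concentration of $u^{m+1}$ — here one uses that $V'(t) = 0$ and the monotonicity of $J_{\frac{n-2\sigma}{n+2\sigma}}$ along the flow (Lemma \ref{lem:properties}(iii)) to preclude concentration, closing the bootstrap loop.
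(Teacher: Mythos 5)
Your overall strategy---Kelvin transforms on the stereographically projected equation, a moving-spheres argument to get the differential Harnack inequality, and volume normalization to convert it into two-sided bounds---is the right one. But there are two genuine gaps in how you try to execute it.

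First, you propose to establish a uniform-in-$t$ $C^0$ upper bound \emph{before} running moving spheres, by a bootstrap that uses the endpoint smallness in \eqref{eq:sub-small} at $m=\tfrac{n-2\sigma}{n+2\sigma}$, claiming that $V'(t)=0$ together with the monotonicity of $J_{\frac{n-2\sigma}{n+2\sigma}}$ along the flow precludes concentration of $u^{m+1}$. This is not true: volume conservation and a monotone energy are exactly what one has in every critical conformal problem, and they do \emph{not} rule out bubbling (this is the content of Theorem~\ref{thm:cc} and the surrounding discussion of Struwe-type concentration-compactness---the flow is a Palais--Smale flow-line, and on a general manifold it can bubble). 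In Lemma~\ref{lem:lp-l-infty} the dependence on $\omega_T(\cdot)$ at the critical exponent is precisely the obstruction, and there is no $T$-uniform control on $\omega_T$ available a priori. In the paper the logical order is reversed: the $C^1$ regularity from a bootstrap on \eqref{eq:mild-form} is only \emph{local} in time, and the uniform bounds $\tfrac{1}{C}\le u\le C$ are deduced \emph{after} the gradient estimate $|\nabla\ln u|\le C$ is in hand, as an immediate consequence of it plus $V(t)=1$.

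Second, and more fundamentally, your moving-spheres argument is frozen in time: you fix $t$ and try to compare $v$ with $v_{x_0,\lambda}$ directly in the time-integrated identity obtained from \eqref{eq:mild-form}, handling the inhomogeneous term coming from $u_0^m$ by ``domination for $\lambda$ in a suitable range.'' That term is a fixed bounded function of the same order as the Riesz-potential term and does not transform under Kelvin inversion; there is no clean domination, and note also $(v_{x_0,\lambda})^m\neq (v^m)_{x_0,\lambda}$ when $m\neq 1$, so the algebra doesn't close. The paper instead works with the genuinely parabolic identity \eqref{eq:reflect-1}: the difference $w^\lambda=v-v_{x_0,\lambda}$ satisfies $\tfrac{1}{A(t)}\pa_t[A(t)w^\lambda]=\int_{|z-x_0|\ge\lambda}K(x_0,\lambda;x,z)\bigl[v^{\frac{n+2\sigma}{n-2\sigma}}-v_{x_0,\lambda}^{\frac{n+2\sigma}{n-2\sigma}}\bigr]\,\ud z$ with $K(x_0,\lambda;\cdot,\cdot)>0$, so nonnegativity of $w^\lambda$ is preserved by the evolution. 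Lemma~\ref{lem:prep-1} shows the moving-spheres inequalities hold on the time slab $[\tfrac12,1]$ (using $u(0)^m\in C^1$ for a definite $\lambda_0$), Lemmas~\ref{lem:prep-2}--\ref{lem:prep-4} propagate them on $[1,T]$ up to $\lambda=\lambda_0$, and then Lemmas A.1 and A.2 of \cite{LL} convert the resulting family of inequalities into $|\nabla\ln v|\le C$ with a $T$-independent constant; this is also why the constant in the theorem depends on $u(t)$ for $t\in[\tfrac12,1]$, a dependence your frozen-time scheme would not naturally produce.
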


The global existence follows from \ref{lem:global-exist} and the $C^1$ regularity follows from a bootstrap argument for the equation \eqref{eq:mild-form}.

Next, we shall use the moving spheres method in  Li  \cite{Li04}  to prove the differential Harnack inequality.   Pick any point $\xi_0\in \mathbb{S}^n$ as the south pole and let $F$ be the inverse of the stereographic projection with  Jacobi determinant $|J_F|$. Let
\[
v(x)= |J_F|^{\frac{n-2\sigma}{2n}} u(F(x))^{\frac{n-2\sigma}{n+2\sigma}} .
\]
Then we have
\be \label{eq:main-Rn}
\frac{1}{A(t)} \pa_t [A(t) v(x,t)]= c_{n,\sigma} \int_{\R^n} \frac{v(y,t)^{\frac{n+2\sigma}{n-2\sigma}}}{|x-y|^{n-2\sigma}}\,\ud y  \quad \mbox{in }\R^n\times [0,\infty),
\ee
where
\[
A(t)= e^ { \int_0^t a(s)\,\ud s}.
\]
For $\lda>0$ and $x_0\in\R^n $, denote
\[
v_{x_0,\lda}(x,t)= \left(\frac{\lda}{|x-x_0|}\right)^{n-2\sigma} v (x^{x_0,\lda},t), \quad \mbox{where } x^{x_0,\lda}= x_0+\frac{\lda^2 (x-x_0)}{|x-x_0|^2}
\]
as the generalized Kelvin transform of $v$ with respect to the sphere $\pa B_\lda(x_0)$.
Using the following two identities (see, e.g., page 162 of \cite{Li04}),
\be \label{eq:ID-1}
\left(\frac{\lda}{|x-x_0|}\right)^{n-2\sigma} \int_{|z-x_0|\ge \lda} \frac{v(z,t)^{\frac{n+2\sigma}{n-2\sigma}}}{|x^{x_0,\lda}-z|^{n-2\sigma}}\,\ud z= \int_{|z-x_0|\le \lda} \frac{v_{x_0,\lda}(z,t)^{\frac{n+2\sigma}{n-2\sigma}}}{|x-z|^{n-2\sigma}}\,\ud z
\ee
and
\be \label{eq:ID-2}
\left(\frac{\lda}{|x-x_0|}\right)^{n-2\sigma} \int_{|z-x_0|\le \lda} \frac{v(z,t)^{\frac{n+2\sigma}{n-2\sigma}}}{|x^{x_0,\lda}-z|^{n-2\sigma}}\,\ud z= \int_{|z-x_0|\ge \lda} \frac{v_{x_0,\lda}(z,t)^{\frac{n+2\sigma}{n-2\sigma}}}{|x-z|^{n-2\sigma}}\,\ud z,
\ee
it is easy to see that $v_{x_0,\lda}$ is also a solution of \eqref{eq:main-Rn} in $(\R^n\setminus \{x_0\} )\times [0,\infty)$.
Notice that
\be \label{eq:reflect-1}
\begin{split}
&\frac{1}{A(t)}\pa_t  [A(t)(v(x,t)- v_{x_0, \lda}(x,t))]\\&= \int_{|z-x_0|\ge \lda} K (x_0, \lda; x,z) [v(z,t)^{\frac{n+2\sigma }{n-2\sigma}} -v_{x_0,\lda}(z,t)^{\frac{n+2\sigma }{n-2\sigma}} ] \,\ud z, \quad x\in \R^n \setminus \overline B_\lda(x_0),
\end{split}
\ee
where
\[
K (x_0, \lda; x,z) = \frac{1}{|x-z|^{n-2\sigma}}- (\frac{\lda}{|x-x_0|})^{n-2\sigma} \frac{1}{|x^{x_0,\lda}- z|^{n-2\sigma}}.
\]
It is elementary to check that
\begin{align*}
K (x_0, \lda; x,z)& >0, \quad  \forall~ |x-x_0|, |z-x_0|>\lda>0 ,\\
K (x_0, \lda; x,z)& =0, \quad  \forall~ |x-x_0|=\lda,\\
\nabla_x K (x_0, \lda; x,z) \cdot (x-x_0) & >0,  \quad  \forall~ |x-x_0|=\lda,~ |z-x_0|>\lda.
\end{align*}

\begin{lem} \label{lem:prep-1} There exist positive  constants $\lda_0$ and $\va_0$ such that for each $x_0\in B_1$, there holds
\be \label{eq:start-1}
v_{x_0,\lda}(x,t) < v(x,t),  \quad \forall~  0<\lda\le \lda_0, \quad |x-x_0| > \lda, \quad t\in [\frac12,1],
\ee
\be \label{eq:start-2}
 v(x,t)- v_{x_0,\lda}(x,t) \ge \frac{\va_0}{ |x|^{n-2\sigma}} \quad \forall~|x|\ge \lda_0+1, \quad t\in [\frac12,  1],
\ee
and
\be \label{eq:start-3}
 v(x,t)- v_{x_0,\lda}(x,t) \ge \va_0 (|x-x_0| -\lda ) \quad \forall~  \lda \le |x|\le \lda_0+1, \quad t\in [\frac34,  1].
\ee
\end{lem}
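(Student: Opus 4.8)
The goal is the standard "starting position" lemma for the method of moving spheres, adapted to the parabolic–integral setting. The plan is to work on the compact time interval, say $t\in[1/2,1]$, and exploit that $u(t)$ is controlled on $[1/2,1]$ in $C^1$ (hence bounded between positive constants there by the regularity discussion preceding Theorem \ref{thm:harnack}), so that $v(\cdot,t)$ is a positive $C^1$ function on $\R^n$ with the standard decay $v(x,t)\sim c|x|^{2\sigma-n}$ as $|x|\to\infty$ coming from the Kelvin-type behavior of $|J_F|^{\frac{n-2\sigma}{2n}}$, uniformly in $t$. All three estimates \eqref{eq:start-1}, \eqref{eq:start-2}, \eqref{eq:start-3} are then purely "pointwise in $t$" facts about comparing $v(\cdot,t)$ with its reflection $v_{x_0,\lda}(\cdot,t)$ across a small sphere, uniform in $(x_0,t)$, and the proof mimics the elliptic moving-spheres lemma (Li \cite{Li04}, Chen-Li-Ou \cite{CLO}).

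\medskip

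First I would record the uniform bounds: there exist $0<c_1\le c_2$ with $c_1\le v(x,t)(1+|x|)^{n-2\sigma}\le c_2$ and $|\nabla v(x,t)|\le c_2(1+|x|)^{1-n+2\sigma}$ for all $x\in\R^n$, $t\in[1/2,1]$; this follows from the $C^1$ control of $u$ on $[1/2,1]$ together with the explicit form of the stereographic Jacobian. Next, for the near-origin estimate \eqref{eq:start-1} and its quantitative companion \eqref{eq:start-3}, I would use the classical calculus fact (Li \cite{Li04}, Lemma 11.1) that if $G$ is a positive $C^1$ function on $\overline{B_{\lda_0+1}}$ with $\partial_r\big(r^{\frac{n-2\sigma}{2}}G(x_0+r\theta)\big)>0$ for $0<r\le\lda_0+1$, uniformly, then $G_{x_0,\lambda}<G$ in that annulus for all $0<\lambda\le\lambda_0$; here $G(\cdot)=v(\cdot,t)$ and the monotonicity of $r\mapsto r^{\frac{n-2\sigma}{2}}v(x_0+r\theta,t)$ near $r=0$ follows because $\partial_r v$ is bounded while $\frac{n-2\sigma}{2}r^{\frac{n-2\sigma}{2}-1}v$ blows up (using $v\ge c_1>0$). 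This simultaneously yields strict positivity of $v-v_{x_0,\lambda}$ and, after a compactness/continuity argument on the compact set $\{\lda\le|x|\le\lda_0+1,\ t\in[1/2,1],\ x_0\in\overline{B_1}\}$ (where $v-v_{x_0,\lambda}$ vanishes only on $|x-x_0|=\lambda$ and has strictly positive inward normal derivative there), the linear lower bound $\va_0(|x-x_0|-\lambda)$ in \eqref{eq:start-3}. For the far-field estimate \eqref{eq:start-2}, I would simply note that for $\lambda\le\lambda_0$ and $|x|\ge\lambda_0+1$ one has $|x^{x_0,\lambda}-x_0|\le\lambda_0^2/(\lambda_0+1-1)$ small, so $v_{x_0,\lambda}(x,t)=(\lambda/|x-x_0|)^{n-2\sigma}v(x^{x_0,\lambda},t)\le C\lambda_0^{\,n-2\sigma}|x|^{2\sigma-n}$, while $v(x,t)\ge c_1|x|^{2\sigma-n}(1+|x|)^{?}$—more precisely $v(x,t)\ge c_1(1+|x|)^{2\sigma-n}\ge \tfrac{c_1}{2}|x|^{2\sigma-n}$; choosing $\lambda_0$ small enough that $C\lambda_0^{\,n-2\sigma}\le \tfrac{c_1}{4}$ gives \eqref{eq:start-2} with $\va_0=\tfrac{c_1}{4}$.

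\medskip

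The last point is to glue the ranges: \eqref{eq:start-1} for all $|x-x_0|>\lambda$ follows by combining \eqref{eq:start-3} (which handles $\lambda\le|x|\le\lambda_0+1$, noting $|x-x_0|$ and $|x|$ are comparable since $x_0\in B_1$) with \eqref{eq:start-2} (which handles $|x|\ge\lambda_0+1$), after possibly shrinking $\lambda_0$ so that both hold simultaneously; monotonicity in $\lambda$ of $r^{\frac{n-2\sigma}{2}}v$ is what lets a single $\lambda_0$ work for all $0<\lambda\le\lambda_0$. I would also remark that it suffices to prove \eqref{eq:start-3} on $t\in[3/4,1]$ and the others on $[1/2,1]$ exactly as stated, since the lemma will only be used to launch the moving-spheres argument from $t$ near $1$ while keeping data control from slightly earlier times.

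\medskip

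\textbf{Main obstacle.} The genuinely new difficulty compared to the elliptic case is ensuring \emph{uniformity in $t$} (and in $x_0\in B_1$) of all constants, which rests entirely on the a priori $C^1$ bound for $u$ on the \emph{closed} interval $[1/2,1]$; once that is in hand the rest is the classical moving-spheres bookkeeping. The one place demanding genuine care is the uniform positivity of the inward normal derivative of $v-v_{x_0,\lambda}$ on $|x-x_0|=\lambda$ in \eqref{eq:start-3}: one must differentiate the reflection identity \eqref{eq:reflect-1} — or rather the static relation $v(x,t)-v_{x_0,\lambda}(x,t)=\int_{|z-x_0|\ge\lambda}K(x_0,\lambda;x,z)\big(v(z,t)^{\frac{n+2\sigma}{n-2\sigma}}-v_{x_0,\lambda}(z,t)^{\frac{n+2\sigma}{n-2\sigma}}\big)\,dz$, which holds for each fixed $t$ because $v(\cdot,t)$ and $v_{x_0,\lambda}(\cdot,t)$ both solve the elliptic integral equation $c_{n,\sigma}(-\Delta)^{-\sigma}(v^{\frac{n+2\sigma}{n-2\sigma}})$ up to the same time-dependent factor — and use the strict sign $\nabla_xK\cdot(x-x_0)>0$ on $|x-x_0|=\lambda$ together with the already-established $v-v_{x_0,\lambda}\ge0$ (hence $v^{\frac{n+2\sigma}{n-2\sigma}}-v_{x_0,\lambda}^{\frac{n+2\sigma}{n-2\sigma}}\ge0$) in the outer region. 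Compactness in $(x_0,\lambda,t)$ then upgrades this to a uniform positive lower bound $\va_0$.
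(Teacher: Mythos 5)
Your overall route matches the paper's, which simply invokes Lemma 3.1 of Li \cite{Li04}: uniform $C^1$ and positivity bounds for $u$ on $[1/2,1]$ translate into the $C^1$ control and $(1+|x|)^{2\sigma-n}$ decay of $v(\cdot,t)$ that the elliptic moving-spheres starting lemma requires, and compactness in $(x_0,\lda,t)$ then produces the uniform $\lda_0,\va_0$. One correction to your ``main obstacle'' paragraph, however: the static integral relation
\[
v(x,t)-v_{x_0,\lda}(x,t)=\int_{|z-x_0|\ge\lda}K(x_0,\lda;x,z)\big(v(z,t)^{\frac{n+2\sigma}{n-2\sigma}}-v_{x_0,\lda}(z,t)^{\frac{n+2\sigma}{n-2\sigma}}\big)\,\ud z
\]
does \emph{not} hold at fixed $t$. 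The function $v(\cdot,t)$ solves the evolution equation \eqref{eq:main-Rn}, not the elliptic integral equation, so only the differentiated version \eqref{eq:reflect-1} is available; there is no ``same time-dependent factor'' cancellation that would give a snapshot identity. Fortunately you never need the PDE to get the strictly positive inward normal derivative. Writing $w(r)=r^{\frac{n-2\sigma}{2}}v(x_0+r\theta,t)$, the reflection gives $r^{\frac{n-2\sigma}{2}}\big(v-v_{x_0,\lda}\big)(x_0+r\theta,t)=w(r)-w(\lda^2/r)$, whose $\pa_r$ at $r=\lda$ equals $2w'(\lda)$, and $w'(\lda)\ge \frac{n-2\sigma}{2}\lda^{\frac{n-2\sigma}{2}-1}c_1-\lda^{\frac{n-2\sigma}{2}}\sup|\nabla v|>0$ uniformly once $\lda_0$ is small; this is exactly what gives \eqref{eq:start-3} by compactness, with no appeal to the equation. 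With that repair the proof is in essence the one the paper has in mind.
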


\begin{proof} This follows from a direct computation. See the proof of Lemma 3.1 of  Li  \cite{Li04}.

\end{proof}

We shall show that \eqref{eq:start-1} holds for all $t\in [1,\infty)$.

Fix an arbitrary $T>\frac32$ and a point $x_0\in B_1$.  Without loss of generality, we assume $x_0=0$ and write $v_\lda= v_{0,\lda}$ for brevity. Similar to Lemma \ref{lem:prep-1}, we have the following lemma which asserts that one can start the moving spheres procedure up to $T$.

\begin{lem}\label{lem:prep-2}  There exists $\lda_T\in (0,\lda_0]$ depending on $T$ such that
\[
v_{\lda}(x,t) < v(x,t),  \quad \forall~  0<\lda\le \lda_T, \quad |x| \ge \lda, \quad \frac34\le t\le T.
\]
\end{lem}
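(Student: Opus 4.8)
\textbf{Proof proposal for Lemma \ref{lem:prep-2}.}

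The plan is to propagate the strict inequality \eqref{eq:start-1} from the initial time slice $[\tfrac34,1]$ forward to $[\tfrac34,T]$ by a continuity (open–closed) argument in the time variable, at a \emph{fixed} small radius $\lda$. First I would note that by Lemma \ref{lem:prep-1} there is $\lda_0>0$ so that $v_\lda(\cdot,t)<v(\cdot,t)$ on $\{|x|>\lda\}$ for all $\lda\le\lda_0$ and $t\in[\tfrac34,1]$, together with the quantitative lower bounds \eqref{eq:start-2}, \eqref{eq:start-3}. Fix such a $\lda\le\lda_0$ and set
\[
\tau(\lda)=\sup\{\,t^*\in[\tfrac34,T]:\ v_\lda(x,t)<v(x,t)\ \ \forall\,|x|>\lda,\ t\in[\tfrac34,t^*]\,\}.
\]
It suffices to find a single $\lda_T\in(0,\lda_0]$, possibly depending on $T$, for which $\tau(\lda_T)=T$; then one shrinks if necessary so that the conclusion holds for all $\lda\le\lda_T$.

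The key is a \textbf{maximum-principle/integral estimate} for the difference $w_\lda(x,t):=A(t)(v(x,t)-v_\lda(x,t))$ on the exterior region $\Sigma_\lda=\{|x|>\lda\}$, using the evolution identity \eqref{eq:reflect-1}. Because $K(0,\lda;x,z)>0$ on $\Sigma_\lda\times\Sigma_\lda$ and $K$ vanishes on $\{|x|=\lda\}$, the right-hand side of \eqref{eq:reflect-1} is, after linearizing the term $v^{p}-v_\lda^{p}$ with $p=\tfrac{n+2\sigma}{n-2\sigma}$ via the mean value theorem, a positive integral operator applied to $w_\lda$ with a bounded coefficient $b(z,t)$ controlled by $\sup v(\cdot,t)^{p-1}$; the latter is uniformly bounded on $[\tfrac12,T]$ by the $C^1$ regularity asserted before Lemma \ref{lem:prep-1} (or, more carefully, by the a priori bounds available on compact time intervals). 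Thus $w_\lda$ satisfies a \emph{differential–integral inequality of the form}
\[
\frac{\partial}{\partial t}w_\lda(x,t)\ \ge\ \int_{\Sigma_\lda} K(0,\lda;x,z)\,b(z,t)\,w_\lda(z,t)\,\ud z\quad\text{on }\Sigma_\lda\times[\tfrac34,\tau(\lda)],
\]
together with $w_\lda\ge0$ initially (on $[\tfrac34,1]$) and $w_\lda\to0$ as $|x|\to\infty$ (with rate $|x|^{-(n-2\sigma)}$). The standard way to convert this into strict positivity: suppose $\tau(\lda)<T$. By continuity $w_\lda(\cdot,\tau(\lda))\ge0$ on $\Sigma_\lda$, and there is a first contact — either an interior point $\bar x$ with $|\bar x|>\lda$ where $w_\lda(\bar x,\tau(\lda))=0$, or contact ``at infinity,'' or contact on the sphere $\{|x|=\lda\}$. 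The boundary-sphere and infinity cases are excluded by the quantitative bounds \eqref{eq:start-2}, \eqref{eq:start-3} together with the sign of $\nabla_x K\cdot(x-x_0)$ on $\{|x|=\lda\}$ (a Hopf-type argument, exactly as in Li \cite{Li04}); these force a definite gap near $|x|=\lda$ and near $\infty$ that is preserved by the flow on a bounded time interval, provided $\lda$ is small enough depending on $T$. At an interior contact point $\bar x$, plugging into the displayed inequality gives $\partial_t w_\lda(\bar x,\tau(\lda))\ge\int K\,b\,w_\lda\ge0$; combined with a Harnack-type / strong-maximum-principle argument for the integral operator (the kernel $K$ is strictly positive, so if $w_\lda$ were to vanish at an interior point while being nonnegative and not identically zero, the integral would be strictly positive, contradicting the vanishing being a spatial minimum that persists) one derives a contradiction with the definition of $\tau(\lda)$. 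Hence $\tau(\lda_T)=T$ for $\lda_T$ small enough.

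The \textbf{main obstacle} is handling the contact behavior in the integral (nonlocal) setting: unlike the local PDE case, one cannot simply use the pointwise parabolic strong maximum principle, so the ``no interior touching'' step must be run through the positivity of the kernel $K(0,\lda;x,z)$ and the structure of \eqref{eq:reflect-1}, borrowing the argument of Chen-Li-Ou \cite{CLO} / Li \cite{Li04} adapted to the time-dependent problem; one also has to be careful that the coefficient $b(z,t)$ stays integrable against $K$ uniformly on $[\tfrac34,T]$, which is where the uniform bound on $v(\cdot,t)$ over the compact interval $[\tfrac12,T]$ (hence the $T$-dependence of $\lda_T$) enters. The remaining steps — the explicit computation underlying \eqref{eq:start-1}–\eqref{eq:start-3}, the identities \eqref{eq:ID-1}–\eqref{eq:ID-2}, and the elementary sign properties of $K$ — are already supplied, so the proof is mostly a matter of assembling them into the continuity argument above.
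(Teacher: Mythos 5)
The paper's proof of this lemma is not a parabolic propagation argument at all: the text preceding the lemma says ``Similar to Lemma~\ref{lem:prep-1},'' and the proof of Lemma~\ref{lem:prep-1} is a \emph{direct} static computation modeled on Li \cite{Li04}, Lemma~3.1. The only inputs needed there are that $v(\cdot,t)$ is positive, bounded above, bounded below away from zero, and $C^1$ with a uniform gradient bound; for Lemma~\ref{lem:prep-2} one simply re-runs that computation on the compact time interval $[\tfrac34, T]$, on which those bounds hold by the $C^1$ regularity established before Theorem~\ref{thm:harnack} (with constants now depending on $T$). No evolution identity, no kernel positivity, and no maximum principle are needed to \emph{start} the moving spheres; those enter later, in Lemmas~\ref{lem:prep-3} and~\ref{lem:prep-4}, to push $\bar\lda$ up to $\lda_0$.

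Your proposal instead runs a continuity-in-time / nonlocal maximum-principle argument using \eqref{eq:reflect-1}, i.e. essentially the machinery of Lemmas~\ref{lem:prep-3}--\ref{lem:prep-4}, to propagate the start inequality forward. This is a genuinely different route and is considerably more work than the intended one-line reduction. As sketched it also leaves real gaps. The interior-touching exclusion is incomplete: at the first contact time you rule out $w_\lda(\bar x,\tau)=0$ by ``$\partial_t w_\lda\ge \int K b w_\lda>0$'', but this requires $w_\lda(\cdot,\tau)\not\equiv 0$ on $\Sigma_\lda$, which you do not address (the identically-zero case is exactly what happens at the critical radius in the moving spheres argument, and ruling it out at small $\lda$ needs an argument). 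More seriously, your boundary-sphere and infinity cases are deferred to ``a definite gap that is preserved by the flow,'' but establishing that gap on $[\tfrac34,\tau(\lda)]$ is precisely the content of Lemma~\ref{lem:prep-3}, i.e. you are importing quantitative estimates that in the paper are derived \emph{after} and \emph{from} Lemma~\ref{lem:prep-2}. This can probably be disentangled (you do have $w_\lda\ge 0$ up to $\tau(\lda)$ by definition, so the evolution identity is usable there), but it requires carrying out the Lemma~\ref{lem:prep-3} estimates inside the continuity argument, which defeats the purpose of having a cheap starting lemma. In short: the approach can likely be made rigorous, but it is the wrong tool for this step; the intended proof is the direct pointwise computation of Li's Lemma~3.1 applied on $[\tfrac34,T]$.
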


Define
\[
\bar \lda= \sup\{\mu\le \lda_0: v_{\lda}(x,t) \le v(x,t),  \quad \forall~  0<\lda\le \mu, \quad |x| \ge \lda, \quad 1\le t\le T \}.
\]
Obviously, $\bar \lda\ge \lda_T$.

\begin{lem}\label{lem:prep-3} There exists $\va_2>0 $ such that
\[
 v(x,t)- v_{\bar \lda}(x,t) \ge \frac{\va_2}{ |x|^{n-2\sigma}} \quad \forall~|x|\ge \bar \lda+1, \quad t\in [1,  T],
\]
and
\[
 v(x,t)- v_{\bar \lda}(x,t) \ge \va_2 (|x| -\bar \lda ) \quad \forall~ \bar  \lda \le |x|\le \bar \lda+1, \quad t\in [1,  T].
\]
\end{lem}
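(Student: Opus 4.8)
The plan is to show that Lemma~\ref{lem:prep-3} follows from Lemma~\ref{lem:prep-2} (applied with $\bar\lambda$ in place of $\lambda_T$, which is legitimate since $v_{\bar\lambda}\le v$ on the relevant set by the definition of $\bar\lambda$ and continuity) together with the strong maximum principle built into \eqref{eq:reflect-1}. The key point is that the integral evolution equation \eqref{eq:reflect-1} for the difference $w:=v-v_{\bar\lambda}$ has a \emph{positive} kernel $K(0,\bar\lambda;x,z)$ on $\{|x|>\bar\lambda,\ |z|>\bar\lambda\}$, so that $w\ge 0$ on this set forces $w$ to be strictly positive in the interior with a quantitative lower bound.

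First I would fix $t\in[1,T]$ and integrate \eqref{eq:reflect-1} in time starting from $t=1$: writing $A(t)w(x,t)=A(1)w(x,1)+\int_1^t A(s)\big(\text{RHS}\big)\,\mathrm{d}s$, and using that by the definition of $\bar\lambda$ we already know $w(\cdot,s)\ge 0$ on $\{|x|\ge\bar\lambda\}$ for all $s\in[1,T]$, the integrand on the right is nonnegative because $K>0$ and $v^{\frac{n+2\sigma}{n-2\sigma}}-v_{\bar\lambda}^{\frac{n+2\sigma}{n-2\sigma}}\ge 0$ there. Hence it suffices to get a lower bound on the single integral
\[
\int_{|z|\ge\bar\lambda} K(0,\bar\lambda;x,z)\,\big[v(z,s)^{\frac{n+2\sigma}{n-2\sigma}}-v_{\bar\lambda}(z,s)^{\frac{n+2\sigma}{n-2\sigma}}\big]\,\mathrm{d}z
\]
at, say, $s=1$, and propagate it. For the region $|x|\ge\bar\lambda+1$: since $v$ is bounded above and below by positive constants on compact subsets of $\mathbb{R}^n$ uniformly for $t\in[1,T]$ (this uses the $C^1$ regularity and positivity asserted in Theorem~\ref{thm:harnack}'s preamble, or one extracts it from \eqref{eq:start-3} at $t=1$ and the lower bound on the Riesz potential), the bracket is bounded below by a positive constant on a fixed annulus $\{2\bar\lambda\le|z|\le 2\bar\lambda+2\}$ — here I would use \eqref{eq:start-2}--\eqref{eq:start-3} with $\bar\lambda\le\lambda_0$ to avoid circularity at $t=1$ — and on that annulus $K(0,\bar\lambda;x,z)\gtrsim |x|^{-(n-2\sigma)}$ for $|x|$ large, giving the claimed decay $\va_2|x|^{-(n-2\sigma)}$. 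For the region $\bar\lambda\le|x|\le\bar\lambda+1$: one uses instead that $K(0,\bar\lambda;x,z)$ vanishes exactly to first order on $\{|x|=\bar\lambda\}$, i.e.\ $K(0,\bar\lambda;x,z)\ge c\,(|x|-\bar\lambda)$ uniformly for $z$ in the same annulus (this is the content of the third displayed inequality on $\nabla_x K\cdot(x-x_0)>0$ right after \eqref{eq:reflect-1}), which produces the linear lower bound $\va_2(|x|-\bar\lambda)$.

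The main obstacle I anticipate is the uniformity in $t\in[1,T]$: the constant $\va_2$ must not degenerate as $t\to T$ or depend on $T$ in a way that breaks the later argument. Here the time-integral structure works in our favor — integrating from $t=1$ and discarding the (nonnegative) later contributions means the bound at $t=1$ already gives the bound for all $t\in[1,T]$ up to the factor $A(1)/A(t)$, which is bounded below uniformly in $T$ because $a(s)$ is bounded (Lemma~\ref{lem:normalized contant}); so $\va_2$ depends only on $\lambda_0$, $\va_0$, and the fixed data on $[1/2,1]$, not on $T$. The second subtlety is that $\bar\lambda$ itself varies with $x_0$ and $T$, but since $\bar\lambda\in[\lambda_T,\lambda_0]$ lies in a compact set and all kernel estimates are continuous in $\bar\lambda$, the constants can be taken uniform; one should just be careful to phrase the annulus and the bounds in terms of $\lambda_0$ rather than $\bar\lambda$. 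Finally, one technical check is that $w(\cdot,1)\ge 0$ and not merely $w(\cdot,1)\ge\text{(something)}$ — this is guaranteed by Lemma~\ref{lem:prep-2} applied on $[\tfrac34,T]$ once $\bar\lambda\le\lambda_T$, and for $\bar\lambda\in(\lambda_T,\lambda_0]$ it is exactly the defining property of $\bar\lambda$ together with continuity in $\lambda$ up to $\bar\lambda$.
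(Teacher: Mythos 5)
Your core mechanism is the same as the paper's: rewrite \eqref{eq:reflect-1} in integral form, use the definition of $\bar\lambda$ to see that the time integral of the positive-kernel term over $s\ge 1$ is nonnegative and can be discarded, and then read off the required lower bounds on $w^{\bar\lambda}$ at the initial time from Lemma~\ref{lem:prep-1} (which is applicable because $\bar\lambda\le\lambda_0$). The paper does exactly this, except that it integrates from $s=\tfrac34$ rather than $s=1$, retaining both the initial term $A(3/4)A(t)^{-1}w^{\bar\lambda}(\cdot,\tfrac34)$ and the contribution $\int_{3/4}^1$ on which Lemma~\ref{lem:prep-1} gives quantitative positivity; your version starting at $s=1$ is an acceptable variant for the statement as written (Lemma~\ref{lem:prep-4} later invokes the conclusion down to $t=\tfrac34$, but on $[\tfrac34,1]$ one can of course cite Lemma~\ref{lem:prep-1} directly). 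So the blueprint matches.

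Two points in your writeup are off, though, and are worth flagging. First, the middle paragraph -- where you try to lower bound $\int_{|z|\ge\bar\lambda}K(0,\bar\lambda;x,z)\bigl[v^{\frac{n+2\sigma}{n-2\sigma}}-v_{\bar\lambda}^{\frac{n+2\sigma}{n-2\sigma}}\bigr]\,\mathrm{d}z$ and propagate it over the time integral $\int_1^t$ -- does not actually go through: for $s>1$ you only know $w^{\bar\lambda}(\cdot,s)\ge 0$ from the definition of $\bar\lambda$, not a positive lower bound on a fixed annulus, so the integrand cannot be bounded away from zero for $s>1$ without circularity. The kernel facts $K\gtrsim|x|^{-(n-2\sigma)}$ for large $|x|$ and $K\ge c\,(|x|-\bar\lambda)$ near the sphere are true, but once you discard the integral entirely (as you do in the last paragraph) they are not needed; Lemma~\ref{lem:prep-1}'s bounds \eqref{eq:start-2}--\eqref{eq:start-3} on $w^{\bar\lambda}(\cdot,1)$ alone already have exactly the forms $\ge\va_0|x|^{2\sigma-n}$ and $\ge\va_0(|x|-\bar\lambda)$ that the lemma asks for (using $\bar\lambda+1\le\lambda_0+1$ to patch the intermediate annulus). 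Second, the closing claim that $\va_2$ can be taken independent of $T$ is wrong: Lemma~\ref{lem:normalized contant} gives $a(s)\ge a(0)>0$, so $A(1)/A(t)=e^{-\int_1^t a(s)\,\mathrm{d}s}\to 0$ as $t\to\infty$; $\va_2$ does degenerate with $T$. This is harmless for the lemma and for the use made of it in Lemma~\ref{lem:prep-4} (where $T$ is fixed), but you should not assert $T$-independence. Stripping the proposal to its final paragraph and dropping the $T$-independence remark gives a correct proof along the paper's lines.
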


\begin{proof} Let
\[
\xi(z,t,\lda)=\begin{cases}
\frac{v(z,t)^{\frac{n+2\sigma }{n-2\sigma}} -v_{\lda}(z,t)^{\frac{n+2\sigma }{n-2\sigma}}} {v(z,t)-v_{\lda}(z,t) },& \quad v(z,t)\neq v_{\lda}(z,t),\\
0,& \quad v(z,t)=v_{\lda}(z,t),
\end{cases}
\]
and $w^\lda(z,t)= v(z,t)-v_{\lda}(z,t) $. By \eqref{eq:reflect-1}, we have
\[
w^{\bar \lda}(x,t)=A(t)^{-1} w^{\bar \lda}(x,\frac34) + \int_{\frac34}^t\int_{B_\lda(x_0)^c}  \frac{A(s)}{A(t)}  K (x_0, \bar \lda; x,z)w(z,t)^{\bar \lda}\,\ud z\ud s
\]
for $(x,t)\in B_\lda(x_0)^c\times (\frac{3}{4}, T] $. Since $w^\lda \ge 0$, we obtain
\[
w^{\bar \lda}(x,t)\ge A(t)^{-1} w^{\bar \lda}(x,\frac34)  + \int_{\frac34}^1\int_{B_\lda(x_0)^c}  \frac{A(s)}{A(t)}  K (x_0, \bar \lda; x,z)w(z,t)^{\bar \lda}\,\ud z\ud s.
\]
The lemma then follows from Lemma \ref{lem:prep-1}.

\end{proof}

\begin{lem}\label{lem:prep-4}  We have  $\bar \lda=\lda_0$.

\end{lem}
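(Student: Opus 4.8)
The plan is to argue by contradiction, following the classical moving-spheres scheme of Li \cite{Li04}: suppose $\bar\lambda < \lambda_0$, and show that the moving-sphere relation $v_{\lambda}\le v$ (for $|x|\ge\lambda$, $1\le t\le T$) can be pushed slightly beyond $\bar\lambda$, contradicting the definition of $\bar\lambda$ as a supremum. The key mechanism is a \emph{strong maximum principle} for the integral evolution \eqref{eq:reflect-1}: since $w^{\bar\lambda}(\cdot,\tfrac34)>0$ outside $\overline B_{\bar\lambda}$ by Lemma \ref{lem:prep-1} (as $\bar\lambda\le\lambda_0$) and since the kernel $K(x_0,\bar\lambda;x,z)$ is strictly positive for $|x|,|z|>\bar\lambda$, the integral identity
\[
w^{\bar\lambda}(x,t)=A(t)^{-1}w^{\bar\lambda}(x,\tfrac34)+\int_{3/4}^t\int_{B_{\bar\lambda}^c}\frac{A(s)}{A(t)}K(x_0,\bar\lambda;x,z)\,\xi(z,s,\bar\lambda)\,w^{\bar\lambda}(z,s)\,\ud z\,\ud s
\]
forces $w^{\bar\lambda}>0$ strictly in $B_{\bar\lambda}^c\times[1,T]$. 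Lemma \ref{lem:prep-3} already provides the quantitative lower bounds for $w^{\bar\lambda}$ needed near the sphere $|x|=\bar\lambda$ and near infinity.

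First I would fix the uniform bounds. By Theorem \ref{thm:harnack}'s preliminary part (the $C^1$ regularity from the bootstrap on \eqref{eq:mild-form}) together with the uniform control on $a(t)$ from Lemma \ref{lem:normalized contant}, $v(\cdot,t)$ and $\nabla v(\cdot,t)$ are bounded uniformly on compact subsets of $\R^n$, uniformly for $t\in[\tfrac34,T]$ — here one must be a little careful that these bounds may depend on $T$, which is acceptable since the eventual $t$-independent Harnack bound will be extracted afterwards by a separate compactness/translation argument; for this lemma a $T$-dependent statement suffices. Next, for $\lambda\in(\bar\lambda,\bar\lambda+\tau]$ with small $\tau$, I would split $B_\lambda^c$ into the annular region $\{\bar\lambda\le|x|\le\bar\lambda+1\}$ and the exterior $\{|x|\ge\bar\lambda+1\}$. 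In the exterior region one uses that $v_\lambda(x,t)\le C\lambda^{n-2\sigma}/|x|^{n-2\sigma}$, so choosing $\tau$ small makes $v_\lambda$ smaller than the $\va_2|x|^{-(n-2\sigma)}$ lower bound on $w^{\bar\lambda}$ from Lemma \ref{lem:prep-3}, combined with continuity of $\lambda\mapsto v_\lambda$; hence $v_\lambda<v$ there. In the annular region one uses the linear-in-$(|x|-\bar\lambda)$ lower bound for $w^{\bar\lambda}$ from Lemma \ref{lem:prep-3} together with the gradient bound on $v$ and the explicit form of $v_\lambda$ (its Kelvin-transform derivative in $\lambda$ is uniformly bounded on this compact annulus) to conclude, for $\tau$ small, that $v(x,t)-v_\lambda(x,t)\ge \tfrac{\va_2}{2}(|x|-\lambda)\ge 0$ for all $\lambda\in(\bar\lambda,\bar\lambda+\tau]$, $|x|\ge\lambda$, $t\in[1,T]$. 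This contradicts the maximality of $\bar\lambda$, so $\bar\lambda=\lambda_0$.

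The main obstacle I anticipate is the \emph{uniformity in $t$} of the various estimates: the kernel $K$ and the explicit geometry of $v_\lambda$ are $t$-independent, but $v$ itself evolves, so one needs the $C^1_{loc}$ bounds on $v(\cdot,t)$ to hold uniformly on $[\tfrac34,T]$ (and uniformly in the pole $x_0\in B_1$) — this is where the a priori regularity from the bootstrap on \eqref{eq:mild-form} and the boundedness of $A(s)/A(t)$ (which follows from $0<a(t)\le\bar J_{\frac{n-2\sigma}{n+2\sigma}}$) must be invoked carefully. A secondary technical point is justifying the strong positivity of $w^{\bar\lambda}$ at time slices $t\ge1$ from positivity at $t=\tfrac34$: this requires that $\xi(z,s,\bar\lambda)$, though possibly zero where $w^{\bar\lambda}=0$, is bounded below by a positive constant on the set where $w^{\bar\lambda}$ is bounded below (using $v,v_{\bar\lambda}>0$ and the convexity of $r\mapsto r^{(n+2\sigma)/(n-2\sigma)}$), so that the double integral term genuinely propagates positivity forward in time — a standard but essential use of the nonnegativity of the integrand.
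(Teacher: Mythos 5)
Your overall plan matches the paper: argue by contradiction, show that the moving-sphere comparison $v_\lambda\le v$ persists slightly past $\bar\lambda$ by splitting $B_\lambda^c$ into a near-sphere shell and an exterior region, and use the quantitative lower bounds from Lemma \ref{lem:prep-3} together with the decay of the Kelvin transform. The exterior region is handled as you say.

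However, your argument in the shell next to $\partial B_\lambda$ has a gap. Combining the linear bound $v-v_{\bar\lambda}\ge\va_2(|x|-\bar\lambda)$ with the Lipschitz dependence $|v_\lambda-v_{\bar\lambda}|\le C(\lambda-\bar\lambda)$ only gives
\[
v(x,t)-v_\lambda(x,t)\ \ge\ \va_2\,(|x|-\bar\lambda)-C(\lambda-\bar\lambda),
\]
and at $|x|=\lambda$ the right-hand side is $(\va_2-C)(\lambda-\bar\lambda)$, which is in general negative; the bound is useless on the thin shell $\lambda\le|x|\lesssim \bar\lambda+\tfrac{C}{\va_2}(\lambda-\bar\lambda)$, which is precisely the critical region in a moving-spheres step. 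The paper closes this by a radial-derivative argument rather than a direct pointwise comparison: since $w^{\bar\lambda}$ vanishes on $\partial B_{\bar\lambda}$ and $w^{\bar\lambda}\ge\va_2(|x|-\bar\lambda)$, one gets $\partial_r(v-v_{\bar\lambda})\ge\va_2$ on $\partial B_{\bar\lambda}$; by continuity of $\nabla v$ and of $\lambda\mapsto v_\lambda$ in $C^1$ (uniformly on $[\tfrac34,T]$), this persists as $\partial_r(v-v_\lambda)\ge\va_2/2$ on $\partial B_r$ for $\bar\lambda\le\lambda,r\le\bar\lambda+\va_3$; and since $v-v_\lambda$ vanishes on $\partial B_\lambda$, integrating the radial derivative outward yields $v-v_\lambda>0$ on $\lambda<|x|\le\bar\lambda+\va_3$. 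You list the right ingredients (gradient bound on $v$, Kelvin-transform derivative bounds), but the step ``to conclude $v-v_\lambda\ge\tfrac{\va_2}{2}(|x|-\lambda)$'' does not follow from a direct comparison and must be replaced by this derivative-and-integrate argument. The rest — uniformity in $t$, the positivity propagation discussion, and the exterior comparison — is sound.
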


\begin{proof} If not, by the above lemma,
\[
\pa_r [(v(\cdot, t)- v_{\bar \lda}(x,t) ]\Big|_{|x|=\bar \lda} \ge \va_2 \quad \mbox{for }t\in [\frac34,T].
\]
By the continuity of $\nabla v$, there exists a small $\va_3>0$ so that
\[
\pa_r [(v(\cdot, t)- v_{ \lda}(x,t) ]\Big|_{ \pa B_r } \ge \va_2/2 \quad \mbox{for } \bar \lda \le \lda,r \le \bar \lda+\va,  ~\frac34 \le t \le T.
\]
Since $v(\cdot, t)- v_{ \lda}(x,t)=0$ on $\pa B_{\lda}$, we have
\[
v(\cdot, t)- v_{ \lda}(x,t) >0 \quad \mbox{for }  \bar \lda \le \lda < |x| \le  \bar \lda +\va_3,  ~\frac34 \le t \le T.
\]
Using the first lower bound in Lemma \ref{lem:prep-3} and choosing $\lda-\bar \lda $ to be very  small,
\[
v(\cdot, t)- v_{ \lda}(x,t) >0 \quad \mbox{for }  |x| >\bar \lda +\va_3 ,  ~\frac34 \le t \le T.
\] We obtain a contradiction and the lemma follows.

\end{proof}

\begin{proof}[Proof of Theorem \ref{thm:harnack}]  By the above lemmas, we have, for all $x_0\in B_1$,
\[
v(x,t) \ge v_{x_0,\lda}(x,t) \quad \forall ~ 0<\lda<\lda_0,  |x-x_0|\ge \lda, ~ t\in [\frac{3}{4}, T].
\]
By Lemma A.1 and Lemma A.2 of \cite{LL}, we  have
\[
|\nabla \ln v(t)| \le C \quad \mbox{in }B_{1/2}, \quad \frac{3}{4}\le t\le T.
\]
Since $\xi_0\in \Sn$ and $T$ are arbitrarily chosen, the differential Harnack inequality follows. Since the flow keeps the volume, the uniform positive lower and upper bounds follow. The proof is finished.
\end{proof}

\section{Convergence}

\label{s:conver}

It is important to mention that the normalized flow, which preserves the volume $\int_{M} u^{m+1}\,\ud ovl_{g_0}$, is equivalent to \eqref{eq:main-rescaled} upon a variable change. In this section, we aim to demonstrate the convergence of solutions of \eqref{eq:main-rescaled}, including the scenario where $m>1$ with replacing the $\tau$ variable by  $\tau= \ln(t+1) $ in \eqref{eq:renorm}, provided that the solutions are consistently bounded between positive constants. Namely, suppose that
\begin{equation} \label{eq:main-rescaled-g}
\pa_t u ^m=  \mathcal{K}_{g_0}( u ) -\frac{m}{|1-m|} u^m \quad \mbox{on }M\times (0,\infty)
\end{equation}
and
\be \label{eq:bd-assump}
\frac{1}{C_0}\le u\le C_0 \quad \mbox{on }M\times (0,\infty),
\ee
where $K_0$ satisfies \eqref{eq:K-1}-\eqref{eq:K-3}, $C_0\ge 1$ is a constant and $m\in (0,1)\cup (1,\infty)$. We shall prove that $u$ converges to a steady solution
\be \label{eq:steady-1}
\mathcal{K}_{g_0} (\varphi)=\frac{m}{|1-m|} \varphi^m \quad \mbox{on }M, \quad \varphi>0.
\ee

First we need two lemmas.

\begin{lem} \label{lem:sim-1} Let $\varphi$ be a solution of \eqref{eq:steady-1} and $\zeta=u-\varphi$. Then there exist a constant depends only $M,g, n,\sigma, \Lda$ and $C_0$ such that
\[
\|\zeta(\cdot,t+\tau)\|_{L^2(M)}\le C e^{C t} \|\zeta(\cdot,t)\|_{L^2(M)}, \quad \forall~ t\ge 1,\tau\ge 0.
\]
\end{lem}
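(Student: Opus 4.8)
The plan is to derive a differential inequality for $\|\zeta(\cdot,t)\|_{L^2(M)}^2$ and then integrate it via Grönwall. First I would subtract the two mild/differential formulations: since $u$ solves \eqref{eq:main-rescaled-g} and $\varphi$ solves \eqref{eq:steady-1}, and writing $\beta=\frac{m}{|1-m|}$, the difference $\zeta=u-\varphi$ satisfies
\[
\partial_t(u^m-\varphi^m)=\mathcal{K}_{g_0}(\zeta)-\beta(u^m-\varphi^m).
\]
Because of the bounds \eqref{eq:bd-assump}, $u^m-\varphi^m = a(X,t)\,\zeta$ with $\frac{1}{C}\le a(X,t)\le C$ (mean value theorem, using $m$-th power on an interval bounded away from $0$ and $\infty$); also $\partial_t(u^m)=m u^{m-1}\partial_t u$, so $\partial_t u = b(X,t)\partial_t(u^m)$ with comparable bounds on $b$. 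Multiplying the equation for $\partial_t(u^m-\varphi^m)$ by $\zeta$ (or by $u^m-\varphi^m$) and integrating over $M$ against $\ud vol_{g_0}$ gives, after using these equivalences,
\[
\frac{\ud}{\ud t}\int_M \zeta^2 \,\ud vol_{g_0} \le C\int_M |\zeta\,\mathcal{K}_{g_0}(\zeta)|\,\ud vol_{g_0} + C\int_M \zeta^2\,\ud vol_{g_0}.
\]

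Next I would control the cross term $\int_M \zeta\,\mathcal{K}_{g_0}(\zeta)$ by $C\|\zeta\|_{L^2(M)}^2$. Here I invoke the Riesz-potential bound \eqref{eq:riesz-bound-1}: on a compact manifold, $\mathcal{K}_{g_0}$ maps $L^2(M)\to L^r(M)$ for some $r>2$ (taking $p=2$ when $2\sigma<n/2$, or using \eqref{eq:riesz-bound-2} directly when $2\sigma\ge n/2$, or simply $L^2\to L^2$ boundedness which suffices), so by Cauchy–Schwarz $\big|\int_M \zeta\,\mathcal{K}_{g_0}(\zeta)\big|\le \|\zeta\|_{L^2}\|\mathcal{K}_{g_0}(\zeta)\|_{L^2}\le C\|\zeta\|_{L^2}^2$. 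Combining, one obtains
\[
\frac{\ud}{\ud t}\|\zeta(\cdot,t)\|_{L^2(M)}^2 \le C\,\|\zeta(\cdot,t)\|_{L^2(M)}^2,\qquad t\ge 1,
\]
and Grönwall's inequality over $[t,t+\tau]$ yields $\|\zeta(\cdot,t+\tau)\|_{L^2}^2\le e^{C\tau}\|\zeta(\cdot,t)\|_{L^2}^2$, hence the claimed estimate (absorbing constants; note the stated bound $Ce^{Ct}$ is even weaker than $e^{C\tau}$, so it certainly holds).

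The one technical point requiring care — and the step I expect to be the main obstacle — is justifying the differentiation under the integral sign and the identity $\frac{\ud}{\ud t}\int \zeta^2 = 2\int \zeta\,\partial_t\zeta$ with $\partial_t\zeta$ expressed through $\partial_t(u^m-\varphi^m)$: a priori we only know $u^m\in C^1([0,\infty);C^0(M))$, not $u$ itself, so I would phrase everything in terms of $w:=u^m-\varphi^m$, use $\zeta = c(X,t)w$ with $c$ bounded above and below, write $\frac{\ud}{\ud t}\int c\,w^2$ type quantities, and handle the time derivative of the comparison factor $c$ (which is bounded but not obviously $C^1$) by instead working with $\frac{\ud}{\ud t}\int w^2$ directly — multiplying $\partial_t w = \mathcal{K}_{g_0}(\zeta)-\beta w$ by $w$ — and only at the very end converting between $\|w\|_{L^2}$ and $\|\zeta\|_{L^2}$, which are comparable by \eqref{eq:bd-assump}. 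This avoids differentiating any merely-bounded coefficient. Everything else is routine once the Riesz bound supplies the $L^2$-continuity of $\mathcal{K}_{g_0}$.
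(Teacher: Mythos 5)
Your argument is essentially the same as the paper's: difference the two equations to get a linear equation for the deviation, test it to get an energy inequality, bound the nonlocal cross term by $C\|\cdot\|_{L^2}^2$ using $L^2\to L^2$ boundedness of the Riesz potential on a compact manifold, and close with Gr\"onwall. The paper linearizes as $m u^{m-1}\pa_t\zeta=\mathcal{K}_{g_0}\zeta - \beta\eta\zeta$ with $\eta=\int_0^1 m((1-\lambda)\varphi+\lambda u)^{m-1}\ud\lambda$ and tests against $\zeta$, which amounts to differentiating the weighted quantity $\int_M \zeta^2 u^{m-1}\ud vol_{g_0}$ in $t$; it then invokes the boundedness of $\eta$ and of $\pa_t u$ (the latter following from $\pa_t u^m\in C^0$ and the two-sided bound \eqref{eq:bd-assump}). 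Your variant of working with $w=u^m-\varphi^m$ and the unweighted $\int_M w^2$, converting to $\zeta$ only at the end via the pointwise comparability $w\asymp\zeta$, is a clean alternative that sidesteps differentiating the weight $u^{m-1}$ — a mild tidying of exactly the technical point you flagged, and entirely compatible with the paper's route. One small correction: for $t\ge1$ and large $\tau$, $Ce^{Ct}$ is \emph{not} weaker than $e^{C\tau}$, so the final clause of your argument is off; in fact Gr\"onwall gives $e^{C\tau}$ and the displayed $e^{Ct}$ in the lemma is evidently a typo for $e^{C\tau}$, which is also the form actually used in the Lojasiewicz--Simon machinery invoked later.
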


\begin{proof} By the equation of $u$ and $\varphi$, a direct computation yields
\be \label{eq:error-1}
m u^{m-1} \pa_t \zeta= \mathcal{K}_{g_0} \zeta + \frac{m}{|1-m|} \eta \zeta,
\ee
where
\[
\eta= \int_0^1m((1-\lda)\varphi+\lda u)^{m-1}\,\ud \lda.
\]
By \eqref{eq:main-rescaled-g} and \eqref{eq:bd-assump}, $\eta$ and  $\pa_t u$ are uniformly bounded.  Multiplying both sides of  \eqref{eq:error-1} by $\zeta$ and integrating over $M$, we have
\[
\frac{\ud }{\ud t} \int_{M} \zeta^2 u^{m-1}  \,\ud vol_{g_0}\le C \int_{M} \zeta^2 u^{m-1}  \,\ud vol_{g_0}.
\]
By applying Gronwall's inequality and taking into account \eqref{eq:bd-assump}, we finish the proof.
\end{proof}

\begin{lem} \label{lem:sim-2} Assume as in Lemma \ref{lem:sim-1}. Then we have
\[
\|\pa_t \zeta(\cdot, t+1)\|_{C^0(M)} + \|\zeta(\cdot, t+1)\|_{C^0(M)} \le C \| \zeta(\cdot, t)\|_{L^2(M)}, \quad t\ge 1,
\]
where $C>0$ depends only $M,g, n,\sigma, \Lda$ and $C_0$.
\end{lem}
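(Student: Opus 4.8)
The statement is a parabolic smoothing estimate: starting from an $L^2$ bound on $\zeta(\cdot,t)$ at time $t$, we gain $C^0$ control of $\zeta$ and $\pa_t\zeta$ one unit of time later. The plan is to run a bootstrap on the integral equation \eqref{eq:error-1}, using the Riesz potential estimates \eqref{eq:riesz-bound-1}--\eqref{eq:riesz-bound-2} together with the regularity machinery of Section \ref{s:ER} (Theorem \ref{thm:improve-int} / Proposition \ref{prop:small_to_regular}) and the time-propagation bound of Lemma \ref{lem:sim-1}.

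First I would rewrite \eqref{eq:error-1} in mild (integrated) form. Dividing by $m u^{m-1}$, which by \eqref{eq:bd-assump} is bounded above and below, gives $\pa_t\zeta = b(X,t)\bigl(\mathcal{K}_{g_0}\zeta + \tfrac{m}{|1-m|}\eta\zeta\bigr)$ with $b = (m u^{m-1})^{-1}$ bounded between positive constants and $\eta$ uniformly bounded. Integrating in $t$ over an interval of length at most $1$ yields a representation
\[
\zeta(X,t') = (\text{flux term from }\zeta(X,s_0)) + \int_{s_0}^{t'} b(X,s)\,\mathcal{K}_{g_0}(\zeta)(X,s)\,\ud s + \int_{s_0}^{t'} (\text{bounded})\cdot \zeta(X,s)\,\ud s,
\]
so that $|\zeta(X,t')| \le \int_{s_0}^{t'} C\,\mathcal{K}_{g_0}(|\zeta|)(X,s)\,\ud s + C\int_{s_0}^{t'}|\zeta(X,s)|\,\ud s + C|\zeta(X,s_0)|$, which is exactly an inequality of the type \eqref{eq:local-1} (after localizing and absorbing the tail via Lemma \ref{lem:tail}). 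Here the ``potential coefficient'' $V$ is simply a bounded function times the constant coefficient, so its $L^{n/2\sigma}$-smallness over small balls is automatic, and Proposition \ref{prop:small_to_regular} applies on a chain of shrinking time sub-intervals. Starting from $\zeta\in L^\infty((t,t+1);L^2(M))$ (valid by Lemma \ref{lem:sim-1}, since $\|\zeta(\cdot,s)\|_{L^2}\le Ce^{C}\|\zeta(\cdot,t)\|_{L^2}$ for $s\in[t,t+1]$), each application of \eqref{eq:riesz-bound-1}--\eqref{eq:riesz-bound-2} together with the integrability-improvement step raises the Lebesgue exponent by a fixed amount; after finitely many steps we land in $L^\infty((t+1-\delta,t+1);L^\infty(M))$, with the norm controlled linearly by $\|\zeta(\cdot,t)\|_{L^2(M)}$ (the bootstrap is linear in $\zeta$ throughout, so no smallness of $\zeta$ is needed — only smallness of the potential coefficient on small balls, which holds). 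Once $\zeta(\cdot,t+1)\in L^\infty$ with the stated bound, plugging back into \eqref{eq:error-1} and using \eqref{eq:riesz-bound-2} to estimate $\mathcal{K}_{g_0}(\zeta)$ in $C^0$ gives $\|\pa_t\zeta(\cdot,t+1)\|_{C^0(M)}\le C\|\zeta(\cdot,t+1)\|_{C^0(M)}\le C\|\zeta(\cdot,t)\|_{L^2(M)}$.

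The main technical obstacle is organizing the bootstrap so that \emph{every} constant that appears is linear in $\|\zeta(\cdot,t)\|_{L^2(M)}$ and independent of $t$; this is where Lemma \ref{lem:sim-1} is essential, since it provides a uniform-in-$s$ bound on $\|\zeta(\cdot,s)\|_{L^2(M)}$ over $s\in[t,t+1]$ in terms of the single quantity $\|\zeta(\cdot,t)\|_{L^2(M)}$. A secondary point of care is the tail term $h$ from the localization (Lemma \ref{lem:tail}): its $L^q$ norm over $B_2$ must also be controlled by $\|\zeta\|_{L^\infty((t,t+1);L^2)}$, which follows from \eqref{eq:h-infty} and the comparability of the kernel. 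With these in hand the argument is the same bootstrap already used for Lemmas \ref{lem:lp-l-infty}, \ref{lem:global-exist} and Proposition \ref{prop:fde-2}, only now carried out for the \emph{difference} $\zeta$ rather than for $u$ itself, and keeping track of the linear dependence on the initial $L^2$ datum.
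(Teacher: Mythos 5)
The overall strategy — pass to the mild (Duhamel) form of \eqref{eq:error-1}, treat it as an integral inequality of type \eqref{eq:local-1}, and iterate Proposition \ref{prop:small_to_regular} starting from the uniform $L^2$ control given by Lemma \ref{lem:sim-1} — is exactly the ``bootstrap argument'' that the paper's one-line proof gestures at, so you are on the intended track. But there is a genuine gap in the reduction. Your integral inequality is
\[
|\zeta(X,t')| \le \int_{s_0}^{t'} C\,\mathcal{K}_{g_0}(|\zeta|)(X,s)\,\ud s + C\int_{s_0}^{t'}|\zeta(X,s)|\,\ud s + C|\zeta(X,s_0)|,
\]
and to match it with \eqref{eq:local-1} the last summand $C|\zeta(X,s_0)|$ must be absorbed into the inhomogeneity $h$, because it is not produced by the Riesz kernel. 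In Proposition \ref{prop:small_to_regular}, however, the output integrability of $w$ is capped by that of $h$: if $h\in L^\infty((0,T);L^q)$ one only concludes $w\in L^\infty((0,T);L^q)$. Since $\zeta(\cdot,s_0)$ is only known in $L^2$ with a quantitative bound, the bootstrap returns $\zeta(\cdot,t')\in L^2$ and goes no further; the piece carrying the initial datum is never upgraded. This is not a technicality. The operator $\mathcal{K}_{g_0}$ is \emph{bounded}, so the evolution $\zeta\mapsto e^{s(b\mathcal{K}_{g_0}-c)}\zeta$ does not smooth the initial datum in finite time, in contrast to the fractional-heat setting in \cite{JXY} that this lemma is modeled on. Concretely, $\zeta(\cdot,t+1)=e^{-\int_t^{t+1}c}\zeta(\cdot,t)+(\text{Duhamel term})$, and the first summand has $C^0$ norm comparable to $\|\zeta(\cdot,t)\|_{C^0}$, which is \emph{not} controlled linearly by $\|\zeta(\cdot,t)\|_{L^2}$. (Compare with the use of the same bootstrap in the proof of Proposition \ref{prop:fde-2}: there the initial-data term is merely bounded, which is all that is needed, whereas here you need a bound that vanishes linearly with $\|\zeta(\cdot,t)\|_{L^2}$.)

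What does close is to keep your bootstrap for the Duhamel part (which is indeed linear in $\sup_{s\in[t,t+1]}\|\zeta(\cdot,s)\|_{L^2}\lesssim\|\zeta(\cdot,t)\|_{L^2}$ by Lemma \ref{lem:sim-1}), and to treat the initial-datum term separately by combining the a priori uniform $C^\alpha$ bound on $\zeta(\cdot,t)$ — which follows from \eqref{eq:bd-assump} and the Riesz-potential regularity of Section \ref{s:ER} applied to the equations for $u$ and $\varphi$ — with interpolation between $L^2$ and $C^\alpha$. This gives $\|\zeta(\cdot,t)\|_{C^0}\lesssim\|\zeta(\cdot,t)\|_{L^2}^\theta$ for some $\theta=\theta(n,\alpha)\in(0,1)$, and hence an estimate with a \emph{sublinear} power on the right-hand side, which still suffices for the Lojasiewicz--Simon step in Theorem \ref{thm:conditional}. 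As written, the strictly linear bound does not follow from the Riesz-potential bootstrap alone, and you should either supply the missing mechanism that upgrades the initial-datum contribution or relax the statement to a power $\theta<1$.
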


\begin{proof} Since $\zeta$ satisfies the linear equation \eqref{eq:error-1}, the lemma follows from a bootstrap argument with using Lemma \ref{lem:sim-1} as a starting point.
\end{proof}

\begin{thm}\label{thm:conditional} Assume as above. Suppose that $u \in C^1([0,\infty); C^0(M))$ is a positive solution of \eqref{eq:main-rescaled-g} satisfying \eqref{eq:bd-assump}. Then
\[
\lim_{t\to \infty} u(t)=S \quad \mbox{uniformly on }M,
\]
where $S$ is a positive solution of \eqref{eq:steady-1}.
\end{thm}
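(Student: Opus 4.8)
The plan is to run a gradient-flow scheme: produce a Lyapunov functional for \eqref{eq:main-rescaled-g}, extract a finite dissipation integral, pass to the $\omega$-limit set, and then promote subsequential convergence to full convergence. Set $c:=\frac{m}{|1-m|}>0$ and
\[
\mathcal{E}(v):=\frac12\int_M v\,\mathcal{K}_{g_0}(v)\,\ud vol_{g_0}-\frac{c}{m+1}\int_M v^{m+1}\,\ud vol_{g_0}.
\]
Along the flow, using $\partial_t(u^m)=\mathcal{K}_{g_0}(u)-cu^m=mu^{m-1}\partial_t u$,
\[
\frac{\ud}{\ud t}\mathcal{E}(u(t))=\int_M\partial_t u\,\big(\mathcal{K}_{g_0}(u)-cu^m\big)\,\ud vol_{g_0}=m\int_M u^{m-1}(\partial_t u)^2\,\ud vol_{g_0}\ge 0 .
\]
By \eqref{eq:bd-assump} and \eqref{eq:riesz-bound-2} (with $p=\infty$), $\mathcal{E}(u(t))$ is bounded above, hence converges to some $\mathcal{E}_\infty$, and $\int_1^\infty\int_M u^{m-1}(\partial_t u)^2\,\ud vol_{g_0}\,\ud t<\infty$; by \eqref{eq:bd-assump} again this is equivalent to $\int_1^\infty\|\partial_t(u^m)(t)\|_{L^2(M)}^2\,\ud t<\infty$.

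I would then show precompactness of the orbit. From the mild identity $u^m(t)=e^{-c(t-t_0)}u^m(t_0)+\int_{t_0}^t e^{-c(t-s)}\mathcal{K}_{g_0}(u)(s)\,\ud s$ and \eqref{eq:riesz-bound-2}--\eqref{eq:bd-assump}, the functions $u^m(t)$, $\partial_t(u^m)(t)=\mathcal{K}_{g_0}(u)(t)-cu^m(t)$ and (differentiating once more, using that $\partial_t u=\tfrac1m u^{1-m}\partial_t(u^m)$ is bounded in $C^0(M)$) $\partial_t^2(u^m)(t)=\mathcal{K}_{g_0}(\partial_t u)(t)-cmu^{m-1}\partial_t u(t)$ are bounded, uniformly for $t\ge 1$, in $C^\alpha(M)$ (the last in $C^0(M)$). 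Hence $\{u(t):t\ge 1\}$ is precompact in $C^0(M)$, the $\omega$-limit set $\Omega$ is nonempty, compact, connected, with all elements in $[C_0^{-1},C_0]$. The uniform Lipschitz bound on $t\mapsto\partial_t(u^m)(t)$ as an $L^2(M)$-valued curve, together with $\int_1^\infty\|\partial_t(u^m)\|_{L^2}^2\,\ud t<\infty$, gives $\|\partial_t(u^m)(t)\|_{L^2(M)}\to 0$ as $t\to\infty$ (Barbalat's lemma). Passing to the limit in $\partial_t(u^m)=\mathcal{K}_{g_0}(u)-cu^m$ along any sequence realizing a point $\varphi\in\Omega$ — $\mathcal{K}_{g_0}$ being continuous on bounded subsets of $L^\infty(M)$ by \eqref{eq:riesz-bound-2} — yields $\mathcal{K}_{g_0}(\varphi)=c\varphi^m$, so every element of $\Omega$ is a positive solution of \eqref{eq:steady-1}.

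Finally I would prove that $\Omega$ is a single point. For $m>1$ this is immediate: \eqref{eq:steady-1} has a unique positive solution by the argument of Proposition~\ref{prop:unique} (which uses only positivity of $K_0$ and $m>1$), so $\Omega=\{S\}$ and $u(t)\to S$ in $C^0(M)$. For $\frac{n-2\sigma}{n+2\sigma}<m<1$, where uniqueness is unavailable, this is where Lemmas~\ref{lem:sim-1} and~\ref{lem:sim-2} are used: fixing $\varphi\in\Omega$ and $\zeta=u-\varphi$, Lemma~\ref{lem:sim-1} bounds the growth of $\|\zeta(t)\|_{L^2(M)}$ over bounded time intervals and Lemma~\ref{lem:sim-2} gives the smoothing $\|\zeta(t+1)\|_{C^0(M)}\le C\|\zeta(t)\|_{L^2(M)}$ with $C$ independent of $\varphi$. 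Combined with the {\L}ojasiewicz--Simon gradient inequality near $\varphi$ — available since $\mathcal{E}$ is real analytic in a $C^0$-neighbourhood of the positive function $\varphi$ — one upgrades $\int_1^\infty\|\partial_t u\|_{L^2}^2\,\ud t<\infty$ to $\int_1^\infty\|\partial_t u(t)\|_{L^2(M)}\,\ud t<\infty$, which makes $u(t)$ Cauchy in $L^2(M)$, hence (by the uniform $C^\alpha$ bound) in $C^0(M)$; the limit lies in $\Omega$, so $\Omega$ reduces to it. In both cases $S:=\lim_{t\to\infty}u(t)$ exists in $C^0(M)$ and solves \eqref{eq:steady-1}.

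The routine parts are the Lyapunov computation and the regularity bootstrap behind precompactness. The main obstacle, as I see it, is the last step for $\frac{n-2\sigma}{n+2\sigma}<m<1$: since \eqref{eq:steady-1} may carry a continuum of positive solutions, one must rule out the orbit slowly wandering along it, and this is exactly the point at which the linearized estimates of Lemmas~\ref{lem:sim-1}--\ref{lem:sim-2} and the analyticity of $\mathcal{E}$ become indispensable; for $m>1$ the difficulty disappears thanks to Proposition~\ref{prop:unique}.
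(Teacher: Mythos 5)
Your proposal follows essentially the same overall strategy as the paper: the same Lyapunov functional ($\mathcal{E}$ is the paper's $G_m$), precompactness of the orbit from the mild formula and Riesz potential bounds, subsequential convergence to a steady state, and finally the {\L}ojasiewicz--Simon gradient inequality (with Lemmas~\ref{lem:sim-1}--\ref{lem:sim-2} as the necessary smoothing/growth estimates) to upgrade to full convergence. Two minor variations are worth noting. First, to show that the $\omega$-limit set consists of steady states, you differentiate the equation once more and invoke Barbalat's lemma to get $\|\partial_t(u^m)(t)\|_{L^2}\to 0$, whereas the paper stays at the $C^1$-in-time level and instead exploits the bound
\[
\int_M \bigl|u(t_j+\tau)^{\frac{m+1}{2}}-u(t_j)^{\frac{m+1}{2}}\bigr|^2\,\ud vol_{g_0}\le \tau\,\tfrac{(m+1)^2}{4m}\bigl(G(u(t_j+\tau))-G(u(t_j))\bigr)
\]
together with convergence of $G$, then passes to the limit directly in the integral identity. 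Both are standard and correct; the paper's route avoids appealing to a second time derivative. Second, your observation that for $m>1$ the argument of Proposition~\ref{prop:unique} makes the steady solution unique — so that the $\omega$-limit set is automatically a singleton and {\L}ojasiewicz--Simon is unnecessary — is a valid and clean simplification that the paper does not spell out. Neither difference changes the substance of the argument.
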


\begin{proof} Since the flow possesses a gradient structure, the idea of Simon \cite{Sim} can be adapted. We follow the proof of Theorem 1.2 in Jin-Xiong-Yang \cite{JXY} with some slight deviation in establishing local estimates.

Let $\beta=\frac{m}{|1-m|}$. By \eqref{eq:main-rescaled-g}, we  have, for any $0\le t_0<t$,
\be \label{eq:equi-0}
u(X, t)^m= e^{-\beta (t-t_0)}u(X, t_0)^m+ \int_{t_0}^{t} e^{\beta(s-t)} \mathcal{K}_{g_0} (u)(X,s)\,\ud s.
\ee
By the potential estimates, we have
\be \label{eq:ab-estimates}
\begin{split}
&u(X, t)^m-u(Y,t)^m\\ & \le e^{-\beta (t-t_0)} (u(X, t_0)^m-u(Y,t_0)^m)+ C |X-Y|^{\al}\int_{t_0}^t e^{\beta(s-t)} \,\ud s\\&
\le e^{-\beta (t-t_0)}(u(X, t_0)^m-u(Y,t_0)^m) + C |X-Y|^{\al}, \quad \forall~ X,Y\in M,
\end{split}
\ee
where $C$ and $\al$ are positive constants depending only on $M, g_0, K_0,m$ and $C_0$ in \eqref{eq:bd-assump}. By taking $t_0=0$, it follows that
$u(t)$ is uniformly continuous for $t\in [0,\infty)$ and there exist a sequence  $t_j\to \infty$ and a positive function $u_\infty\in C(M)$ such that
\[
u(t_j) \to u_\infty \quad \mbox{in }C^0(M), \quad \mbox{as }j\to \infty.
\]

\textit{Claim:}  $u_\infty$ is a solution of \eqref{eq:steady-1}.

Indeed, let
\[
G_m(u)= \int_{M} \Big(\frac12 u\mathcal{K}_{g_0} u-\frac{ \beta}{m+1} u^{m+1}\Big)\,\ud vol_{g_0}.
\]
Then we have
\be \label{eq:g-t}
\frac{\ud }{\ud t} G_m(u)= \int_{M} (\mathcal{K}_{g_0} u-\beta u^m)\pa_t u \,\ud vol_{g_0}= m \int_{M} |\pa_t u|^2 u^{m-1} \,\ud vol_{g_0} \ge 0.
\ee
By the assumption \eqref{eq:bd-assump}, $G(u)$ is bounded and hence,
\[
\lim_{t\to \infty} G(u(t))=G(u_\infty).
\]
For $\tau>0$, we have
\begin{align*}
&\int_{M} |u(t_j+\tau)^{\frac{m+1}{2}}- u(t_j)^{\frac{m+1}{2}}|^2 \,\ud vol_{g_0}\\&= \int_{M}|\int_{t_j}^{t_j+\tau} \pa_s u(t_j+s)^{\frac{m+1}{2}}\,\ud s|^2\,\ud vol_{g_0} \\&
\le \tau \frac{(m+1)^2}{4} \int_{M} \int_{t_j}^{t_j+\tau} |\pa_s u(t_j+s)| u(t_j+s)^{m-1}\,\ud s\ud vol_{g_0}\\&
= \tau \frac{(m+1)^2}{4 m} (G(u(t_j+\tau))-G(u(t_j))).
\end{align*}
Using the pointwise estimate
\[
|u(X, t_j+\tau)^{\frac{m+1}{2}}- u(X, t_j)^{\frac{m+1}{2}}|\le |u(X, t_j+\tau)- u(X, t_j)|^{\frac{m+1}{2}},
\]
we have $u(X, t_j+\tau) \to u_\infty$ in $L^{m+1}$ uniformly in $\tau$. By interpolation inequality, we have, for any $m+1< q<\infty$,
\[
u(X, t_j+\tau) \to u_\infty\quad \mbox{in }L^{q}(M)
\]
uniformly in $\tau$. By \eqref{eq:equi-0},
\[
u(X, t_j+1)^m= e^{-\beta }u(X, t_j)^m+ \int_{t_j}^{t_j+1} e^{\beta(s-t_{j}-1)} \mathcal{K}_{g_0} (u)(X,s)\,\ud s.
\]
Sending $j\to \infty$, we obtain
\[
u_\infty^m =e^{-\beta }u_\infty^m+ \mathcal{K}_{g_0} (u_\infty) \lim_{j\to \infty} \int _{t_j}^{t_j+1} e^{\beta(s-t_{j}-1)}\,\ud s,
\]
i.e.,
\[
\beta u_\infty^m= \mathcal{K}_{g_0} (u_\infty).
\]
The claim is verified.

Since  the functional $G(\cdot)$ is real analytic on
\[
\om_C= \{f\in L^{m+1}(M): \frac{1}{C}\le  f\le C\},
\]
the so-called Lojasiewicz-Simon gradient inequality holds; see Chill \cite{Chill}. Then one can use the standard argument to  show that
\[
u(t)\to u_\infty \quad \mbox{in }C(M), \quad \mbox{as }t\to \infty.
\]
In fact, armed with Lemmas  \ref{lem:sim-1} and \ref{lem:sim-2}, one can mimic the corresponding proof in \cite{JXY} to establish the aforementioned full convergence. We omit the details. The theorem is proved.

\end{proof}

As in \cite{JXY},  the linearized operator at $S$ will play a crucial role in the convergence rate. We may consider the eigenvalue problem
\be \label{eq:steady-2}
\mathcal{K}_{g_0} (\phi)=\lda S^{m-1} \phi \quad \mbox{on }M.
\ee
To seek a symmetry structure, we introduce
\[
\ud \mu= S^{1-m} \ud vol_{g_0}, \quad \mathcal{K}^{\mu}(f)(X)= \int_{M} K_0(X,Y)f(Y)\,\ud \mu
\]
and $L^2(M,\ud \mu)$ space equipped with the inner product
\[
\langle f, h\rangle_{L^2(M,\ud \mu)}= \int_{M} fh \,\ud \mu.
\] Note that
\[
\langle \mathcal{K}^{\mu}(f), h\rangle_{L^2(M,\ud \mu)}= \langle f, \mathcal{K}^{\mu}(h)\rangle_{L^2(M,\ud \mu)}
\] and $\mathcal{K}^{\mu}: L^2(M,\ud \mu) \to L^2(M,\ud \mu)$ is compact.  The eigenvalue problem
\be \label{eq:steady-3}
\mathcal{K}^{\mu}(\varphi) =\lda \varphi \quad \mbox{in }L^2(M, \ud \mu),
\ee
has countable many eigenvalues, which must be real.
If $\varphi$ is an eigenfunction, then $\phi= S^{1-m} \varphi$ will be an eigenfunction of \eqref{eq:steady-2}. In line with \cite{JXY}, one can establish a sharp convergence rate. We leave the details to the interested reader.

\begin{proof}[Proof of Theorem \ref{thm:main0}] If $m>1$, it follows from Proposition \ref{prop:mge1}. If $\frac{n-2\sigma}{m+2\sigma}<m<1$, it follows from Proposition \ref{prop:fde-2} and Theorem \ref{thm:conditional}. If $m=\frac{n-2\sigma}{m+2\sigma}$, by  Proposition \ref{prop:fde-1} we have the lower and upper bound. Let $\tilde u$ be defined as in \eqref{eq:main-rescaled} and 
\[
G(\tilde u)= \int_{M} \left[\frac12\tilde u\mathcal{K}_{g_0} \tilde u -\frac{m}{(1-m)(1+m)} \tilde u^{m+1}\right].
\] By direct computation, we have 
\[
\frac{\ud }{\ud t}G=\int_{M} \pa_t \tilde u^m \pa_t \tilde u\ge 0.
\] Since $G$ is bounded, $\lim_{t\to \infty} G=G_\infty $ exists. 
On the other hand, 
\[
\quad \frac{m}{m+1} \frac{\ud }{\ud t }\int_{M} \tilde u^{m+1}= 2 G +\frac{m}{1+m} \int_{M} \tilde u^{m+1} . 
\]
Thus 
\[
\int_{M} \tilde u(t)^{m+1}= e^{t} \big(\int_{M} \tilde u_0^{m+1} +\frac{2(m+1)}{m} \int_0^t e^{-s}G(\tilde u(s))\,\ud s\big).
\]
Since $\int_{M} \tilde u(t)^{m+1}$ is bounded, this forces   
\[
\frac{2(m+1)}{m} \int_0^\infty e^{-s}G(\tilde u(s))\,\ud s=-\int_{M} \tilde u_0^{m+1} .
\] It follows that
\[
\int_{M} \tilde u(t)^{m+1}= -\frac{2(m+1)}{m} \int_t^{\infty} e^{t-s} G(\tilde u(s))\,\ud s\to -\frac{2(m+1)}{m}  G_\infty,
\]
as $t\to \infty$. We complete the proof  of Theorem \ref{thm:main0}.
\end{proof}

\begin{proof}[Proof of Theorem \ref{thm:main1}] By  Theorem \ref{thm:harnack}, we have global positive upper and lower bounds. Using Lemma \ref{lem:normalized contant} and a change of variable, we can transform the normalized flow into the \eqref{eq:main-rescaled-g}.  Theorem \ref{thm:main1} then follows from Theorem \ref{thm:conditional}.

\end{proof}

\end{document}